\documentclass[10pt]{article}
\usepackage[margin=1in]{geometry}
\makeatletter
\usepackage{fullpage}

\usepackage{amssymb, amsmath, amsthm, latexsym}

\usepackage{url}
\usepackage{algorithm}
\usepackage{algorithmic}
\usepackage{tabularx}
\usepackage{booktabs}
\usepackage{paralist}
\usepackage{mathtools}
\usepackage{xcolor}
\usepackage{hyperref}
\usepackage[capitalise]{cleveref}

\usepackage{bbm}
\usepackage{bm}
\usepackage{wrapfig}
\usepackage{makecell}
\usepackage{multirow}

\usepackage{nicefrac}

\usepackage{float}
\usepackage[flushleft]{threeparttable}

\usepackage{caption}
\usepackage{multirow}
\usepackage{colortbl}
\definecolor{bgcolor}{rgb}{0.8,1,1}
\definecolor{bgcolor2}{rgb}{0.8,1,0.8}
\definecolor{niceblue}{rgb}{0.0,0.19,0.56}

\hypersetup{colorlinks,linkcolor={blue},citecolor={niceblue},urlcolor={blue},
  pdftitle={Acceleration Survives Heavy Tails and Generalized Smoothness},
  pdfauthor={Taha El Bakkali El Kadi, Eduard Gorbunov}}

\usepackage{natbib}
\usepackage{sidecap}

\newtheorem{theorem}{Theorem}
\newtheorem{proposition}{Proposition}
\newtheorem{corollary}{Corollary}

\newtheorem{assumption}{Assumption}

\newtheorem{remark}{Remark}
\newtheorem{lemma}{Lemma}
\crefname{assumption}{Assumption}{Assumptions}
\Crefname{assumption}{Assumption}{Assumptions}
\newcommand*{\R}{{\mathbb R}}
\newcommand*{\E}{{\mathbb E}}

\newcommand{\1}{\mathbbm{1}}
\newcommand{\ip}[2]{\left\langle #1,#2\right\rangle}

\renewcommand{\|}{\parallel}

\newcommand\ev[1]{ \left\langle #1 \right\rangle}

\newcommand{\norm}[1]{\left\lVert#1\right\rVert}

\definecolor{tiffanyblue}{rgb}{0.04, 0.73, 0.71}
\usepackage[most]{tcolorbox}
\newtcolorbox{remarkbox}{
    colback=tiffanyblue!20,
    colframe=black,
    boxrule=0.5mm,
    sharp corners,
    left=2mm,
    right=2mm,
    top=2mm,
    bottom=2mm
}

\newtcolorbox{remarkbox2}{
    colback=tiffanyblue!20,
    colframe=black,
    boxrule=0.5mm,
    sharp corners,
    left=2mm,
    right=2mm,
    top=0mm,
    bottom=0mm
}

\usepackage{pifont}
\definecolor{PineGreen}{RGB}{0,110,51}
\definecolor{BrickRed}{RGB}{143,20,2}

\usepackage{tikz-cd}

\usepackage{graphicx}

\newcolumntype{Y}{>{\centering\arraybackslash}X}

\usepackage{xspace}

\definecolor{mycolor}{RGB}{0,150,70}
\newcommand{\algname}[1]{{\color{PineGreen}\sf  #1}\xspace}

\usepackage[colorinlistoftodos,bordercolor=orange,backgroundcolor=orange!20,linecolor=orange,textsize=scriptsize]{todonotes}

\def\clip{\texttt{clip}}

\usepackage{amsthm}
\usepackage{hyperref}
\newcommand{\mytag}[1]{%
  \refstepcounter{equation}%
  \edef\@currentlabel{\theequation}%
  {({\@currentlabel})}%
  \@bsphack
  \begingroup
    \@onelevel@sanitize\@currentlabelname
    \edef\@currentlabelname{%
      \expandafter\strip@period\@currentlabelname\relax.\relax\@@@%
    }%
    \protected@write\@auxout{}{%
      \string\newlabel{#1}{%
        {\@currentlabel}%
        {\thepage}%
        {\@currentlabelname}%
        {\@currentHref}{}%
      }%
    }%
  \endgroup
  \@esphack
}
\makeatother

\title{\bf Acceleration Survives Heavy Tails and Generalized Smoothness}
\author{
\begin{tabular}[t]{c}
     {\bf Taha El Bakkali El Kadi}\\
     {\small MBZUAI\textsuperscript{$\dagger$}, UM6P} \\
     {\small \texttt{taha.elbakkali@um6p.ma}}
\end{tabular}
\quad
\begin{tabular}[t]{c}
     {\bf Eduard Gorbunov}\\
     {\small MBZUAI}\\
     {\small \texttt{eduard.gorbunov@mbzuai.ac.ae}}
\end{tabular}
}

\begin{document}

\maketitle
\begingroup
\renewcommand{\thefootnote}{\fnsymbol{footnote}}
\footnotetext[2]{This work was done when Taha El Bakkali El Kadi was a visiting student in the group of Prof.~Eduard Gorbunov at MBZUAI.}

\endgroup
\begin{abstract}
 We establish high-probability accelerated convergence guarantees for convex $(L_0,L_1)$-smooth optimization with unbiased stochastic gradients under heavy-tailed noise, assuming a central $\alpha$-th noise moment bounded by $\sigma^\alpha$, $\alpha\in(1,2]$. Such guarantees are known under classical $L$-smoothness, but extending them through localization over an $O(R)$-radius ball may introduce an $e^{O(L_1R)}$ factor. We avoid this dependence in two ways. First, a two-phase scheme uses \algname{Clip-SGD} to reach a low-curvature regime, after which clipped stochastic similar triangles (\algname{Clip-SSTM}) attains, with probability at least $1-\delta$,
$
f(y_K)-f^\star\le\widetilde{\mathcal O}\big(L_0R^2/K^2+\sigma RK^{-(\alpha-1)/\alpha}\big),
$
where $R$ bounds the distance from the Phase-II starting point to a minimizer. Second, a modified deterministic weight schedule allows \algname{Clip-SSTM} to start directly from the initial point, controlling the high-curvature transient before recovering acceleration. With the prescribed initialization, both approaches recover the classical accelerated rate, with $L_1$ confined to an accuracy-independent transient, up to logarithmic factors. The two-phase analysis uses two coupled potentials to preserve the low-curvature regime despite momentum; the one-phase analysis controls the initial high-curvature regime using a modified version of the classical SSTM weight schedule.
\end{abstract}

\tableofcontents

\section{Introduction}

Accelerated stochastic optimization is well understood under classical smoothness, where the gradient is globally Lipschitz. Two additional difficulties arise in settings relevant to modern learning. First, the local smoothness of the objective may grow with the gradient norm, a dependence observed by \citet{zhang2019gradient} along training trajectories. Second, stochastic gradient noise can be heavy-tailed, as observed, for example, in BERT pre-training \citep{zhang2020adaptive}. Together, these effects make acceleration challenging: high curvature may require smaller steps, while large stochastic gradient errors can be amplified by momentum. Gradient clipping can limit the effect of such errors, but introduces bias that must also be controlled. Under classical $L$-smoothness, high-probability accelerated guarantees under heavy-tailed noise have been established by \citet{sadiev2023high,nguyen2023improved}.

The $(L_0,L_1)$-smoothness condition provides a natural model for such nonuniform curvature. For twice-differentiable objectives, this dependence is captured by $\norm{\nabla^2 f(x)}\le L_0+L_1\norm{\nabla f(x)}$ \citep{zhang2019gradient}. First-order formulations for continuously differentiable functions were studied by \citet{zhang2020improved} and \citet{chen2023generalized}; we adopt the formulation in Assumption~\ref{asm: smoothness}. In this model, the curvature can be large when the gradient norm is large, while becoming comparable to $L_0$ when $\norm{\nabla f(x)}$ is of order $L_0/L_1$.

A natural approach is to take an accelerated method with guarantees under classical smoothness, show that its iterates remain in a ball of radius $O(R)$ \citep{sadiev2023high}, and use $(L_0,L_1)$-smoothness to obtain an effective smoothness constant over this ball. One could then apply the standard accelerated analysis with this local constant. However, the resulting smoothness parameter can be as large as $L_0\exp(O(L_1R))$, leading to an exponential dependence on $L_1R$.

Deterministic acceleration under generalized smoothness has been
studied in several recent works.
\citet{gorbunov2025methods} proposed an accelerated Similar Triangles
method for convex $(L_0,L_1)$-smooth optimization, although its
convergence guarantee retains an exponential dependence on $L_1R$,
where $R$ bounds the initial distance to a minimizer.
\citet{vankov2025optimizing} subsequently established accelerated
guarantees without such an exponential dependence, using a method
that requires solving a one-dimensional line-search subproblem at
each iteration, making a direct extension to stochastic gradients
nontrivial. More recently, \citet{pmlr-v306-tyurin26b} obtained
deterministic accelerated guarantees under an $\ell$-smoothness
assumption that generalizes $(L_0,L_1)$-smoothness.
They consider both a gradient-descent warm start followed by
acceleration and a method that starts directly from the initial
point, adjusting its stepsizes to control the initial high-curvature
regime. Under $(L_0,L_1)$-smoothness, these methods recover an
$L_1$-independent leading high-accuracy rate after an initial
transient. Our two-phase and one-phase approaches follow similar
principles, but use \algname{Clip-SSTM} with different weight
choices. The additional challenge is to control curvature and
accumulated clipping errors simultaneously along the accelerated
trajectory, with high probability under heavy-tailed noise.

Stochastic accelerated-gradient methods under $(L_0,L_1)$-smoothness
have also been analyzed recently.
\citet{yu2025convergence} study randomized stochastic accelerated
gradient methods under $(L_0,L_1)$-smoothness and relaxed affine
noise assumptions, obtaining high-probability convergence guarantees
for convex optimization; their analysis, however, does not yield
an accelerated convergence rate in this setting.
\citet{dvinskikh2026localize} establish high-probability accelerated
guarantees for convex $(L_0,L_1)$-smooth optimization under unbiased
stochastic gradients with additive norm-sub-Gaussian noise,
while avoiding an exponential dependence on $L_1R$.
Their guarantees assume light-tailed noise, whereas we assume
only a bounded central $\alpha$-th moment with $\alpha\in(1,2]$, which is a weaker condition.

This gap raises a natural question:
\begin{center}
\begin{tcolorbox}[
    width=0.95\linewidth,
    colback=PineGreen!5,
    colframe=PineGreen!50,
    boxrule=0.5pt,
    arc=1mm,
    left=3mm, right=3mm,
    top=2mm, bottom=2mm
]
\centering\itshape
Can classical clipped accelerated schemes achieve
high-probability acceleration under $(L_0,L_1)$-smoothness
and heavy-tailed noise without exponential dependence on $L_1R$?
\end{tcolorbox}
\end{center}

We answer affirmatively.

\paragraph{Our contributions.}
Under Assumptions~\ref{asm: convexity} (convexity), \ref{asm: smoothness} ($(L_0,L_1)$-smoothness),
and \ref{asm:oracle} (bounded central $\alpha$-th moment, $\alpha\in(1,2]$), we establish high-probability accelerated
guarantees for two approaches.
Let $\norm{x_0-x^\star}\le R_0$.
Our total complexity bounds avoid exponential dependence on $L_1R_0$.

\begin{itemize}
\item \textbf{Two-phase acceleration.}
We use a nonaccelerated method as a warm start to obtain
a point $\bar x$ satisfying $f(\bar x)-f^\star\le cL_0/L_1^2$,
an accuracy-independent threshold implying
$\norm{\nabla f(\bar x)}=O(L_0/L_1)$ with high probability.
Starting from $\bar x$, \algname{Clip-SSTM} satisfies,
with probability at least $1-\delta$,
$$
f(y_K)-f^\star
\le\widetilde{\mathcal O}\!\left(
L_0R^2/K^2+\sigma R K^{-(\alpha-1)/\alpha}\right)
$$
after $K$ additional stochastic gradient evaluations,
where $\norm{\bar x-x^\star}\le R$ and
$\widetilde{\mathcal O}$ hides logarithmic factors in $K$ and $1/\delta$.
The analysis couples two potentials, anchored at $\bar x$ and
$x^\star$, to keep the accelerated trajectory in the
low-curvature region despite momentum.

\item \textbf{One-phase acceleration.}
\algname{Clip-SSTM} can also start directly from $x_0$,
using a modified deterministic weight schedule to control
the high-curvature transient before recovering acceleration.
With the prescribed initialization, it recovers the classical
$L$-smooth accelerated rate after a transient that is
accuracy-independent up to logarithmic factors.
Here the distance bound is $R_0$.
\end{itemize}

\paragraph{Complexity guarantees.}
For the two-phase method, the warm start also guarantees $\norm{\bar x-x^\star}\le 2R_0$ on its success event, so the accelerated phase can use $R=2R_0$. Including the warm-start cost, \cref{cor:end-to-end} ensures $f(y)-f^\star\le\varepsilon$ with probability at least $1-\delta$
using $\widetilde{\mathcal O}\left(1+(L_1R_0)^2+\allowbreak \sqrt{\frac{L_0R_0^2}{\varepsilon}}+\allowbreak\bigl(\frac{\sigma R_0}{\varepsilon}\bigr)^{\frac{\alpha}{\alpha-1}}
\right)$ stochastic gradient evaluations. For the one-phase method, \cref{thm:one-phase} with the
initialization in \cref{rem_init} ensures $f(y_K)-f^\star\le\varepsilon$ with the same probability using $\widetilde{\mathcal O}\left(K_0+\allowbreak\sqrt{\frac{L_0R_0^2}{\varepsilon}}+\allowbreak
\bigl(\frac{\sigma R_0}{\varepsilon}\bigr)^{\frac{\alpha}{\alpha-1}}\right)$ stochastic gradient evaluations. The transient $K_0$ depends logarithmically on the initial-gap bound.
Using the worst-case gap bound gives $K_0=\widetilde{\mathcal O}((1+L_1R_0)^2)$;
a tighter bound can give a smaller transient. Up to logarithmic factors and an additional initial transient, both methods recover the complexity of accelerated methods
for classical $L$-smooth convex optimization under heavy-tailed noise, with $L$ replaced by $L_0$.
The dependence on $L_1$ is confined to the transient.
\begin{table}[H]
\centering
\caption{
Comparison with related results for convex optimization.
HP denotes a high-probability guarantee.
Here $\alpha$-moment means bounded central $\alpha$-th moment noise,
$\alpha\in(1,2]$, and ``No exp. dep.'' means no exponential dependence
on $L_1R$ (or $\sqrt{H_1}R$ under $(H_0,H_1)$-smoothness).
}
\label{tab:comparison}
\scriptsize
\setlength{\tabcolsep}{4pt}
\renewcommand{\arraystretch}{0.95}

\begin{tabular}{lccccc}
\toprule
\textbf{Reference}
& \textbf{Smoothness}
& \textbf{Noise}
& \textbf{HP}
& \textbf{Accel. rate}
& \textbf{No exp. dep.}
\\
\midrule

\rowcolor{gray!10}
\multicolumn{6}{c}{
\textit{Classical $L$-smooth optimization under heavy-tailed noise}
}
\\
\midrule

\citet{sadiev2023high}
& $L$-smooth
& $\alpha$-moment
& \ding{51}
& \ding{51}
& N/A
\\

\citet{nguyen2023improved}
& $L$-smooth
& $\alpha$-moment
& \ding{51}
& \ding{51}
& N/A
\\

\midrule
\rowcolor{gray!10}
\multicolumn{6}{c}{
\textit{$(L_0,L_1)$-smooth optimization: nonaccelerated guarantees}
}
\\
\midrule

\citet{gorbunov2025methods} (nonaccel.)
& $(L_0,L_1)$
& Det.
& N/A
& \ding{55}
& \ding{51}
\\

\citet{vankov2025optimizing} (nonaccel.)
& $(L_0,L_1)$
& Det.
& N/A
& \ding{55}
& \ding{51}
\\

\citet{lobanov2024linear}
& $(L_0,L_1)$
& Det.
& N/A
& \ding{55}
& \ding{51}
\\

\citet{gaash2025convergence}
& $(L_0,L_1)$
& sub-Gaussian
& \ding{51}
& \ding{55}
& \ding{51}
\\

\citet{kadi2026highprobabilityconvergenceclippedsgd}
& $(L_0,L_1)$
& $\alpha$-moment
& \ding{51}
& \ding{55}
& \ding{51}
\\

\midrule
\rowcolor{gray!10}
\multicolumn{6}{c}{
\textit{Generalized-smooth optimization: acceleration and momentum methods}
}
\\
\midrule

\citet{gorbunov2025methods} (accelerated)
& $(L_0,L_1)$
& Det.
& N/A
& \ding{51}
& \ding{55}
\\

\citet{vankov2025optimizing} (accelerated)$^\dagger$
& $(L_0,L_1)$
& Det.
& N/A
& \ding{51}
& \ding{51}
\\

\citet{pmlr-v306-tyurin26b}
& $\ell$-smooth
& Det.
& N/A
& \ding{51}
& \ding{51}
\\

\citet{yu2025convergence}$^\ddagger$
& $(L_0,L_1)$
& relaxed affine noise
& \ding{51}
& \ding{55}
& \ding{51}
\\

\citet{dvinskikh2026localize}
& $(L_0,L_1)$
& norm-sub-Gaussian
& \ding{51}
& \ding{51}
& \ding{51}
\\

\citet{lobanov2026acceleratedstochasticmethodh0h1smoothness}$^*$
& $(H_0,H_1)$
& generalized $\alpha$-moment
& \ding{51}
& \ding{51}
& \ding{51}
\\

\rowcolor{blue!5}
\textbf{This work}
& $(L_0,L_1)$
& $\alpha$-moment
& \ding{51}
& \ding{51}
& \ding{51}
\\

\bottomrule
\end{tabular}

\vspace{2pt}
\begin{minipage}{0.97\textwidth}
\footnotesize
Det. = deterministic.
$^\dagger$\citet{vankov2025optimizing} use a one-dimensional line search
with function values for their accelerated method.
$^\ddagger$\citet{yu2025convergence} analyze stochastic accelerated-gradient schemes under relaxed affine almost-sure noise, with a sub-Gaussian extension. Their convex high-probability guarantee does not establish an accelerated
rate. 
$^*$Concurrent work.
\citet{lobanov2026acceleratedstochasticmethodh0h1smoothness}
consider $(H_0,H_1)$-smoothness and a generalized central moment
bound with constant, gradient-dependent, and objective-gap-dependent
terms. Their method uses projections and phase restarts;
our methods require neither projections nor repeated restarts.
See the \hyperref[par:concurrent-work]{discussion of concurrent work}
for details.
\end{minipage}

\end{table}

\section{Preliminaries}
\label{sec:prel}

\paragraph{Notation.}
We work in $\mathbb R^d$, equipped with the standard Euclidean
inner product $\langle\cdot,\cdot\rangle$ and norm $\norm{\cdot}$.
For $\lambda>0$, let
$\mathbb B(0,\lambda):=\{x\in\mathbb R^d:\norm{x}\le\lambda\}$.
We define the clipping operator $\clip(x,\lambda)$ as the Euclidean projection
onto this ball:
$$
\clip(x,\lambda)
:=\Pi_{\mathbb B(0,\lambda)}(x)
=
\begin{cases}
x, & \norm{x}\le\lambda,\\[2pt]
\dfrac{\lambda}{\norm{x}}x, & \norm{x}>\lambda.
\end{cases}
$$
Thus, clipping preserves the direction of a nonzero vector
while limiting its norm to $\lambda$.
We assume that $f$ admits a minimizer, fix
$x^\star\in\arg\min_{x\in\mathbb R^d}f(x)$, and denote
$f^\star:=f(x^\star)$.
We use $L_0,L_1>0$ for the constants in the
$(L_0,L_1)$-smoothness condition introduced below.
Finally, let $\nu\in(0,1)$ be the unique positive solution
of $\nu e^\nu=1$.

\paragraph{Problem.}
We minimize $f$ over $\mathbb R^d$ using stochastic gradient
samples $\nabla f(x,\xi)$, under the assumptions stated below.
We establish high-probability objective-gap bounds for
\algname{Clip-SSTM} with two initialization and weight choices:
standard accelerated weights after a warm start in a
low-curvature regime, and modified weights that allow a direct
start from $x_0$ without a small-gap requirement.

\paragraph{Common algorithm.}
Both procedures use Algorithm~\ref{alg:Clip-sstm},
where $v_0$ is the starting point, $A_0\ge0$ the initial
accumulated weight, $\alpha_{k+1}>0$ the added weight,
and $\lambda_{k+1}>0$ the clipping level.
Their initialization and weight choices are specified below.

\begin{algorithm}[H]
\caption{\algname{Clip-SSTM}}
\label{alg:Clip-sstm}
\begin{algorithmic}[1]
\STATE \textbf{Input:} Starting point $v_0$, iteration budget $K\ge1$, initial weight $A_0\ge0$,
positive weights $\{\alpha_k\}_{k=1}^K$, and positive clipping levels
$\{\lambda_k\}_{k=1}^K$.
\STATE $y_0=z_0=v_0$.
\FOR{$k=0,\ldots,K-1$}
    \STATE $\displaystyle
    A_{k+1}=A_k+\alpha_{k+1}$.
    \STATE $\displaystyle
    x_{k+1}
    =
    \frac{A_ky_k+\alpha_{k+1}z_k}{A_{k+1}}$.
    \STATE Draw a fresh sample $\xi_{k+1}$, independent of the past.
    \STATE $\displaystyle
    g_{k+1}
    =
    \clip\!\left(
    \nabla f(x_{k+1},\xi_{k+1}),
    \lambda_{k+1}
    \right)$.
    \STATE $\displaystyle
    z_{k+1}=z_k-\alpha_{k+1}g_{k+1}$.
    \STATE $\displaystyle
    y_{k+1}
    =
    \frac{A_ky_k+\alpha_{k+1}z_{k+1}}{A_{k+1}}$.
\ENDFOR
\STATE \textbf{Output:} $y_K$.
\end{algorithmic}
\end{algorithm}

\paragraph{Two-phase procedure.}
Phase~I runs a warm-start method and returns a point $\bar x$. On its success event,
$$
\begin{cases}
\norm{\bar x-x^\star}\le R,\\[0.5em]
f(\bar x)-f^\star\le c\dfrac{L_0}{L_1^2},
\end{cases}
$$
where $c>0$ is a universal constant specified in the corresponding convergence theorem.

Phase~II runs \algname{Clip-SSTM} using fresh stochastic-gradient samples. For a parameter $a>0$, we initialize $y_0=z_0=\bar x$, $A_0=0$, and, for $k\ge0$, choose $
\alpha_{k+1}=\frac{k+2}{aL_0}.$ For this choice, $A_k=\frac{k(k+3)}{2aL_0},$ and $A_{k+1}\ge\frac{aL_0}{2}\alpha_{k+1}^2.$ Thus $A_k$ grows quadratically from the beginning of Phase~II. The small-gap condition at $\bar x$ places the accelerated method in a local low-curvature regime, allowing us to use from the first iteration the standard accelerated weight schedule of \cite{sadiev2023high}.

\paragraph{One-phase procedure.}
The one-phase procedure runs \algname{Clip-SSTM} directly from $x_0$. We assume
$\norm{x_0-x^\star}\le R_0,$
but impose no small-gap condition. Let $\Delta_0:=f(x_0)-f^\star$, and let $\bar\Delta_0\ge\Delta_0$ be any deterministic upper bound on the initial objective gap.

Let $a>0$ and $\ell\ge1$ be parameters, whose specific choices are given in the corresponding convergence theorem; in particular, the theorem sets $\ell=8\log(4K/\delta)$. We initialize $y_0=z_0=x_0$ and choose $A_0>0$ satisfying $A_0\le\frac{1}{aL_0}$ and $
A_0\bar\Delta_0\le\frac{R_0^2}{64}.$ In particular, $A_0\bigl(f(x_0)-f^\star\bigr)\le A_0\bar\Delta_0\le\frac{R_0^2}{64},$ so the initial objective gap contributes only a controlled amount to the potential used in the analysis. For $k\ge0$, we use the weight rule
$$
\alpha_{k+1}=\min\left\{\frac{A_k}{64\ell(1+L_1R_0)},\sqrt{\frac{A_k}{aL_0}}\right\}.
$$
Consequently, $\frac{\alpha_{k+1}}{A_k}\le\frac{1}{64\ell(1+L_1R_0)}$ and $
A_{k+1}\ge A_k\ge aL_0\alpha_{k+1}^2.$ The two terms in the minimum correspond to two regimes. If
$$
A_k<\frac{[64\ell(1+L_1R_0)]^2}{aL_0},
$$
then $\frac{A_k}{64\ell(1+L_1R_0)}<\sqrt{\frac{A_k}{aL_0}},$ so the method uses
$
\alpha_{k+1}=\frac{A_k}{64\ell(1+L_1R_0)}.
$
Thus, in this regime, the weight $\alpha_{k+1}$ is deliberately chosen smaller than the standard accelerated square-root weight $\sqrt{A_k/(aL_0)}$.

Once
$$
A_k\ge\frac{[64\ell(1+L_1R_0)]^2}{aL_0},
$$
the square-root term becomes the smaller one, and hence $\alpha_{k+1}=\sqrt{\frac{A_k}{aL_0}}.$ Since $A_{k+1}\ge A_k$, the square-root choice remains active thereafter. The method then follows the standard accelerated square-root schedule $
A_{k+1}=A_k+\sqrt{\frac{A_k}{aL_0}},$ for which $A_k$ grows quadratically with the number of subsequent iterations.

Thus, the modified weight rule uses smaller-than-standard accelerated weights during the high-curvature transient and automatically switches to the standard accelerated schedule once this additional restriction is no longer active. The remaining parameter choices and clipping levels for the two procedures are specified in their respective convergence theorems.
\subsection{Assumptions}

We first assume convexity.

\begin{assumption}[Convexity]
\label{asm: convexity}
The function $f$ is differentiable and convex, i.e.,
$$
f(y)\ge f(x)+\ip{\nabla f(x)}{y-x}  \quad \forall x,y\in \R^d.
$$
\end{assumption}

We next assume $(L_0,L_1)$-smoothness.

\begin{assumption}[$(L_0,L_1)$-smoothness]
\label{asm: smoothness}
There exist constants $L_0,L_1\ge0$ such that, for all
$x,y\in\mathbb R^d$,
$$
\norm{\nabla f(x)-\nabla f(y)}\le \left(L_0+L_1 \sup_{u\in[x,y]}\norm{\nabla f(u)}\right)\norm{x-y},
$$
\end{assumption}
This assumption is strictly more general than the classical $L$-smoothness assumption, which requires $\nabla f$ to be globally Lipschitz continuous: it reduces to the standard $L$-smoothness with $L = L_0$ if $L_1 = 0$. Moreover, one can construct functions that satisfy Assumption~\ref{asm: smoothness} but not $L$-smoothness, e.g., exponent of norm $f(x) = \exp(\norm{x}) + \exp(-\norm{x})$, power of norm $f(x) = \norm{x}^{n}$ for $n>2$, and exponent of the linear function $f(x) = \exp(\ev{a,x})$ \citep{chen2023generalized, gorbunov2025methods}.

Finally, we assume unbiasedness and boundedness of the
central $\alpha$-th moment.

\begin{assumption}[Stochastic oracle]
\label{asm:oracle}
There exist $\alpha\in(1,2]$ and $\sigma\ge0$ such that,
for every $x\in\mathbb R^d$,
$$\begin{cases}
\mathbb E_\xi[\nabla f(x,\xi)] = \nabla f(x),\\[0.5em]
\mathbb E_\xi\!\left[
\norm{\nabla f(x,\xi)-\nabla f(x)}^\alpha
\right]
\le \sigma^\alpha.
\end{cases}$$
\end{assumption}
This assumption has become relatively standard and has been considered in
\citep{zhang2020adaptive,cutkosky2021high,sadiev2023high,
nguyen2023improved,chezhegov2024gradient}.
For $\alpha<2$, examples include stable distributions with stability parameter
$\beta\in(\alpha,2)$.
When $\alpha=2$, \cref{asm:oracle} reduces to the classical bounded-variance
assumption \citep{nemirovski2009robust,ghadimi2013stochastic,takavc2013mini}.
\section{Main Results}
\label{sec:main_results}

\subsection{Two-phase result}
\label{sec:two-phase-result}

We first analyze the two-phase procedure. The following theorem
gives the high-probability convergence guarantee for the accelerated
stage initialized from a point satisfying the warm-start conditions.

\begin{theorem}
\label{thm:main}
Suppose that Assumptions~\ref{asm: convexity}, \ref{asm: smoothness}, and
\ref{asm:oracle} hold, where $L_0,L_1>0$. Assume that $\norm{\bar x-x^\star}\le R$ and
$f(\bar x)-f^\star\le cL_0/L_1^2$, where $R>0$ and $0<c\le1/32$. Let $K\ge1$, $\delta\in(0,1)$, and set $\ell:=8\log(4K/\delta)$ and
$q:=1024^{1/\alpha}\ell^{(\alpha-1)/\alpha}K^{1/\alpha}$. Let
$a:=\max\left\{192\ell^2,\frac{2q^2}{c}\left(\frac{\sigma L_1}{L_0}\right)^2,\frac{2q\sigma K}{L_0R}\right\}.$
Run \algname{Clip-SSTM} with starting point $v_0=\bar x$, initialization $A_0=0$ and $y_0=z_0=\bar x$, and, for every $k=0,\ldots,K-1$, let $\alpha_{k+1}:=\frac{k+2}{aL_0}.$
For $i=1,\ldots,K$, let $\lambda_i:=\frac1{\alpha_i\ell}\min\left\{\frac{\sqrt{cA_iL_0}}{L_1},R\right\}.$
Then, with probability at least $1-\delta$, the output $y_K$ of \algname{Clip-SSTM} satisfies
$$
f(y_K)-f^\star=\widetilde{\mathcal O}\left(\frac{L_0R^2}{K^2}+\frac{\sigma R}{K^{(\alpha-1)/\alpha}}\right),
$$
where $\widetilde{\mathcal O}$ suppresses logarithmic factors in $K$ and $1/\delta$.
\end{theorem}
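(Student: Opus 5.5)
We follow the high-probability analysis of \algname{Clip-SSTM} in \citet{sadiev2023high}, replacing global $L$-smoothness by a \emph{local} smoothness constant of order $L_0$. This constant is valid only while the iterates stay in the low-curvature region $\{x: f(x)-f^\star\le 2cL_0/L_1^2\}$ near $\bar x$. Two standard facts from $(L_0,L_1)$-smoothness are used throughout. First, for convex $(L_0,L_1)$-smooth $f$, $f(x)-f^\star\le cL_0/L_1^2$ with $c\le1/32$ implies $\norm{\nabla f(x)}\le L_0/L_1$ up to a constant; this follows from $\norm{\nabla f(x)}^2\le 2(L_0+L_1\norm{\nabla f(x)})(f(x)-f^\star)$. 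Second, on segments where $\norm{\nabla f}\le 2L_0/L_1$, the descent lemma $f(y)\le f(x)+\ip{\nabla f(x)}{y-x}+\tfrac{3L_0}{2}\norm{y-x}^2$ holds as long as $\norm{y-x}\lesssim 1/L_1$.

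\textbf{Step 1 (one-step inequality).} For each $k$, assuming $x_{k+1},y_{k+1}$ lie in the good region, the standard SSTM computation with $A_{k+1}\ge \tfrac{aL_0}{2}\alpha_{k+1}^2$ and $a\ge 192\ell^2\ge 3$ gives
\[
A_{k+1}(f(y_{k+1})-f^\star)+\tfrac12\norm{z_{k+1}-x^\star}^2\le A_k(f(y_k)-f^\star)+\tfrac12\norm{z_k-x^\star}^2+\alpha_{k+1}\ip{\theta_{k+1}}{x^\star-z_k}+\alpha_{k+1}^2\norm{\theta_{k+1}}^2,
\]
where $\theta_{k+1}=g_{k+1}-\nabla f(x_{k+1})$. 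The step $\norm{y_{k+1}-x_{k+1}}=\tfrac{\alpha_{k+1}^2}{A_{k+1}}\norm{g_{k+1}}$ must be $\lesssim 1/L_1$. This is exactly why the clipping level is $\lambda_i\alpha_i\ell\le \sqrt{cA_iL_0}/L_1$: it makes $\alpha_{k+1}\norm{g_{k+1}}\le\sqrt{cA_{k+1}L_0}/(\ell L_1)$, hence $\tfrac{\alpha_{k+1}^2}{A_{k+1}}\norm{g_{k+1}}\lesssim \tfrac{1}{\ell L_1}\sqrt{c/a}$, which is small.

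\textbf{Step 2 (coupled potentials and induction).} Summing gives $A_k(f(y_k)-f^\star)+\tfrac12\norm{z_k-x^\star}^2\le \tfrac12R^2+\text{(noise sums)}$. This alone only yields $f(y_k)-f^\star\lesssim R^2/A_k$, which is not small for early $k$. So we run a second potential anchored at $\bar x$ instead of $x^\star$. Convexity gives $f(y_{k+1})\le$ a model at $\bar x$, and the same inequality with $x^\star$ replaced by $\bar x$ yields
\[
A_k(f(y_k)-f(\bar x))+\tfrac12\norm{z_k-\bar x}^2\le \text{noise terms with }\bar x.
\]
Hence $f(y_k)\le f(\bar x)+(\text{noise})/A_k$. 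The clipping level involving $\sqrt{cA_iL_0}/L_1$ is designed so that these noise terms are $\lesssim cL_0A_k/L_1^2$. This keeps $f(y_k)-f^\star\le 2cL_0/L_1^2$ for all $k$, and likewise for $x_{k+1}$ as a convex combination, via convexity and the bound on $\norm{z_k-\bar x}$. We formalise this by induction on events $E_k$: for all $t\le k$, both potentials are bounded (by $2R^2$ and $2cL_0A_t/L_1^2$, respectively) and all iterates are in the good region. On $E_k$, the quantities $\norm{x_{k+1}-x^\star}$, $\norm{z_k-x^\star}\le 2R$ and $\norm{\nabla f(x_{k+1})}\le 2L_0/L_1$ are bounded.

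\textbf{Step 3 (noise control; the main obstacle).} Split $\theta=\theta^u+\theta^b$ into unbiased and bias parts. On $E_k$, $\norm{\nabla f(x_{k+1})}\le\lambda_{k+1}/2$ must hold; this follows from $a\ge 2q^2c^{-1}(\sigma L_1/L_0)^2$ together with $2L_0/L_1$ versus $\lambda$, and the $R$ branch is handled similarly using $a\ge 2q\sigma K/(L_0R)$. The standard clipping lemmas then give $\norm{\theta^u}\le2\lambda$, $\norm{\E\theta}\le 2^\alpha\sigma^\alpha\lambda^{1-\alpha}$, and $\E\norm{\theta^u}^2\le18\sigma^\alpha\lambda^{2-\alpha}$. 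The martingale sums $\sum\alpha_t\ip{\theta_t^u}{\eta_t}$ with truncated $\eta_t$ are handled by Bernstein/Freedman, and the variance sums by the same inequality, each with failure probability $\delta/(4K)$ per $k$; this is where $\ell=8\log(4K/\delta)$ comes from. The bias sums are deterministic. The hardest part is checking that, with $\lambda_i$ given as a minimum of two branches, every term is bounded simultaneously by $R^2/\text{const}$ for the $x^\star$ potential and by $cL_0A_k/L_1^2$ for the $\bar x$ potential. This requires bounds such as $\sum_t \alpha_t^2\sigma^\alpha\lambda_t^{2-\alpha}\lesssim \min\{R^2, cL_0A_k/L_1^2\}$, which use $q$ and the three branches in $a$. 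We would verify both branches of each $\lambda_i$ separately.

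\textbf{Step 4 (conclusion).} A union bound gives $\Pr(E_K)\ge1-\delta$. On $E_K$, $f(y_K)-f^\star\le 2R^2/A_K\lesssim aL_0R^2/K^2$. Substituting $a$, the term $192\ell^2$ gives $\widetilde{\mathcal O}(L_0R^2/K^2)$, and the term $2q\sigma K/(L_0R)$ gives $\sigma R q/K=\widetilde{\mathcal O}(\sigma RK^{-(\alpha-1)/\alpha})$. The remaining term gives $q^2\sigma^2L_1^2R^2/(cL_0K^2)$, which by AM--GM is at most a combination of the first two terms times logarithmic factors.
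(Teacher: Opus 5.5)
Your architecture matches the paper's: a local descent inequality with constant of order $L_0$ maintained along the trajectory, two potentials anchored at $\bar x$ and $x^\star$, induction over nested events, and Freedman's inequality plus a union bound. However, two steps fail as written.

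First, $\norm{\nabla f(x_{k+1})}\le\lambda_{k+1}/2$ does not follow from the $\sigma$-dependent entries of $a$. Those entries only give $\lambda_{k+1}\gtrsim q\sigma/\ell$, a noise scale unrelated to the gradient norm. They also vanish when $\sigma=0$, where the condition is still needed to make the clipping bias vanish. Nor is your crude bound $\norm{\nabla f(x_{k+1})}\le 2L_0/L_1$ enough. The first branch of $\lambda_{k+1}$ is only of order $\sqrt{ca}\,L_0/(\ell L_1)$. It exceeds $4L_0/L_1$ only if $a\gtrsim\ell^2/c$, which $a\ge192\ell^2$ does not imply since $c\le1/32$. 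The second branch, $R/(\alpha_{k+1}\ell)=aL_0R/((k+2)\ell)$, decays like $1/k$.

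What is missing is a deterministic bound matching both branches: $\norm{\nabla f(x_{k+1})}\lesssim\min\{\sqrt c\,L_0/L_1,\;R\sqrt{L_0/A_{k+1}}\}$. You obtain it from three ingredients:
\begin{itemize}
\item the gap bound $f(y_k)-f^\star\le\min\{2cL_0/L_1^2,R^2/A_k\}$, which uses both potentials;
\item the low-gap estimate $\norm{\nabla f(x)}\le 3\sqrt{L_0(f(x)-f^\star)}$, valid when $f(x)-f^\star\le L_0/(16L_1^2)$;
\item a transfer to $x_{k+1}$ via $x_{k+1}-y_k=\frac{\alpha_{k+1}}{A_{k+1}}(z_k-y_k)$, $\norm{z_k-y_k}\le\min\{4\sqrt{cA_kL_0}/L_1,2\sqrt2R\}$ and $A_{k+1}\le\frac52A_k$.
\end{itemize}
Only then does $A_{k+1}\ge 96\ell^2L_0\alpha_{k+1}^2$ give $\alpha_{k+1}\ell\norm{\nabla f(x_{k+1})}\le\frac12\min\{\sqrt{cA_{k+1}L_0}/L_1,R\}$; this is \cref{lem:gradient-below-clipping}. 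This lower requirement on $\lambda_i$, not only the step-length constraint of your Step~1, is what fixes its form.

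Relatedly, $x_{k+1}$ is not controlled ``as a convex combination via convexity''. The value $f(z_k)$ is not controlled at scale $L_0/L_1^2$, because $\norm{z_k-\bar x}$ may be much larger than $1/L_1$. Control comes instead from the short displacement $L_1\norm{x_{k+1}-y_k}\lesssim\sqrt c/\ell$ together with the gradient-growth bound $L_0+L_1\norm{\nabla f(y)}\le e^{L_1\norm{y-x}}(L_0+L_1\norm{\nabla f(x)})$.

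Second, Step~4 does not give an $L_1$-free rate. The term $2R^2/A_K$ contributes $Cq^2\sigma^2L_1^2R^2/(cL_0K^2)$ for a numerical constant $C$. No AM--GM combination of $L_0R^2\ell^2/K^2$ and $q\sigma R/K$ can dominate it, since neither involves $L_1$ (let $L_1\to\infty$). You must also use the $\bar x$-potential on $E_K$, which gives $f(y_K)-f^\star\le 2cL_0/L_1^2$. Then $\min\{2cL_0/L_1^2,\,Cq^2\sigma^2L_1^2R^2/(cL_0K^2)\}\le\sqrt{2C}\,q\sigma R/K$ by $\min\{x,y\}\le\sqrt{xy}$. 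This is where the second potential enters the final bound, not only the localization.
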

\paragraph{A direct warm-start bound.}
The warm-start condition required in Theorem~\ref{thm:main} can be
obtained using \algname{Clip-SGD}. Let $x_0$ satisfy
$\|x_0-x^\star\|\le R_0$, let
$\Delta_0:=f(x_0)-f^\star$, and allocate failure probability $\delta/2$
to Phase~I. Define
$
M:=\max\left\{L_1R_0,\log\frac{24}{\delta}\right\}.
$
By \citet[Theorem~2]{kadi2026highprobabilityconvergenceclippedsgd}, under Assumptions~\ref{asm: convexity}, \ref{asm: smoothness}, and \ref{asm:oracle}, \algname{Clip-SGD} produces, after $K_1$ stochastic
gradient evaluations, a computable output $\bar x$ satisfying, with
probability at least $1-\delta/2$,
$$
f(\bar x)-f^\star=\mathcal O\left(\max\left\{\frac{L_0R_0^2}{K_1},R_0\sigma
\left(\frac{M}{K_1}\right)^{\frac{\alpha-1}{\alpha}}\right\}\right).
$$
An explicit sufficient iteration requirement, as stated in
\citet{kadi2026highprobabilityconvergenceclippedsgd}, is
$$
K_1=\Omega\left(\log^2\frac{24}{\delta}+L_1R_0\log\left[\frac{24e}{\delta}
\left(1+\frac{L_1\Delta_0}{L_0R_0}\right)\right]\right).
$$
To reach the switching accuracy
$f(\bar x)-f^\star\le cL_0/L_1^2$, the two terms in the convergence
bound require, up to universal constants, $K_1=\Omega\left((L_1R_0)^2\right)$
and $K_1=\Omega\left(M\left(\frac{\sigma R_0L_1^2}{L_0}\right)^{\frac{\alpha}{\alpha-1}}
\right),$ respectively. Moreover, the additional iteration requirement above is already covered,
up to logarithmic factors, by $1+(L_1R_0)^2$. Consequently, Phase~I reaches a point satisfying
$f(\bar x)-f^\star\le cL_0/L_1^2$ after $
K_1=\widetilde{\mathcal O}\left(1+(L_1R_0)^2+M\left(\frac{\sigma R_0L_1^2}{L_0}
\right)^{\frac{\alpha}{\alpha-1}}\right)$ stochastic gradient evaluations.

Moreover, on the same event, the localization argument in the proof of
\citet[Theorem~2]{kadi2026highprobabilityconvergenceclippedsgd} gives
$\|x_k-x^\star\|\le 2R_0$ for all Phase~I iterates. Since $\bar x$  is a convex combination of a subset of these iterates, it follows that
$\|\bar x-x^\star\|\le 2R_0$.
Hence Theorem~\ref{thm:main} can be applied in Phase~II with
$R=2R_0$ and failure probability $\delta/2$. Therefore, to obtain
$f(y_K)-f^\star\le\varepsilon$, Phase~II requires
$
K=\widetilde{\mathcal O}\left(\sqrt{\frac{L_0R_0^2}{\varepsilon}}+\left(
\frac{\sigma R_0}{\varepsilon}\right)^{\frac{\alpha}{\alpha-1}}\right).$

By a union bound, the complete two-phase procedure succeeds with
probability at least $1-\delta$, and its total stochastic-gradient
complexity is
$$\widetilde{\mathcal O}\left(1+(L_1R_0)^2+M\left(\frac{\sigma R_0L_1^2}{L_0}\right)^{\frac{\alpha}{\alpha-1}} +\sqrt{\frac{L_0R_0^2}{\varepsilon}}+\left(\frac{\sigma R_0}{\varepsilon}\right)^{\frac{\alpha}{\alpha-1}}\right).$$
In particular, all explicit $L_1$-dependence is confined to the accuracy-independent Phase-I cost. 

We can further sharpen the transient term and remove its explicit dependence on $\sigma$. The key observation is that $(L_0,L_1)$-smoothness also implies $(\bar L_0,L_1)$-smoothness for any $\bar L_0\ge L_0$. By choosing a larger $\bar L_0$, we can switch to Phase II earlier, while the additional cost incurred by using $\bar L_0$ in the accelerated phase is absorbed into the stochastic accuracy term. This yields the following corollary.

\begin{corollary}
\label{cor:end-to-end}
Suppose that Assumptions~\ref{asm: convexity}, \ref{asm: smoothness}, and \ref{asm:oracle} hold, with $L_0,L_1>0$. Let $\|x_0-x^\star\|\le R_0$, where $R_0>0$. For any $\varepsilon>0$ and $\delta\in(0,1)$, a two-phase procedure consisting of \algname{Clip-SGD}, with parameters chosen as in~\cite[Theorem~2]{kadi2026highprobabilityconvergenceclippedsgd}, followed by a single run of \algname{Clip-SSTM} with the calibration specified in the proof, returns a point $y$ satisfying
$
f(y)-f^\star\le\varepsilon
$
with probability at least $1-\delta$, using $
\widetilde{\mathcal O}\left(
1+(L_1R_0)^2
+\sqrt{\frac{L_0R_0^2}{\varepsilon}}
+\left(\frac{\sigma R_0}{\varepsilon}\right)^{\frac{\alpha}{\alpha-1}}
\right)$
stochastic-gradient evaluations.
\end{corollary}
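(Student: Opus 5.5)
We prove the corollary by running the two-phase argument above with $L_0$ replaced by a larger surrogate
$$
\bar L_0:=\max\Bigl\{L_0,\ L_1\sigma R_0\,\bigl(\sigma R_0/\varepsilon\bigr)^{1/(\alpha-1)}\Bigr\}.
$$
The choice of $\bar L_0$ is calibrated so that two things hold. First, the Phase-I transient no longer depends on $\sigma$. Second, the extra accelerated cost $\sqrt{\bar L_0R_0^2/\varepsilon}$ is dominated by $1+(L_1R_0)^2+(\sigma R_0/\varepsilon)^{\alpha/(\alpha-1)}$.

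\textbf{Step 1: replace $L_0$ by $\bar L_0$.} Since $\bar L_0\ge L_0$, Assumption~\ref{asm: smoothness} holds with the pair $(\bar L_0,L_1)$. Hence both \citet[Theorem~2]{kadi2026highprobabilityconvergenceclippedsgd} and Theorem~\ref{thm:main} may be invoked with $\bar L_0$ in place of $L_0$. This includes the switching threshold $c\bar L_0/L_1^2$, the parameter $a$, the weights $\alpha_{k+1}=(k+2)/(a\bar L_0)$, and the clipping levels. This substitution is the entire calibration of \algname{Clip-SSTM} referred to in the statement.

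\textbf{Step 2: Phase-I cost.} Following the direct warm-start bound, Phase~I reaches $f(\bar x)-f^\star\le c\bar L_0/L_1^2$ with probability at least $1-\delta/2$ after
$$
K_1=\widetilde{\mathcal O}\Bigl(1+(L_1R_0)^2+M\bigl(\sigma R_0L_1^2/\bar L_0\bigr)^{\alpha/(\alpha-1)}\Bigr)
$$
stochastic gradient evaluations. On the same event, $\|\bar x-x^\star\|\le 2R_0$. It remains to bound the $\sigma$-term, and I split into two cases.

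If $\bar L_0=L_0$, then $L_0\ge L_1\sigma R_0(\sigma R_0/\varepsilon)^{1/(\alpha-1)}$. This rearranges to
$$
\Bigl(\frac{\sigma R_0L_1^2}{L_0}\Bigr)^{\alpha/(\alpha-1)}\le (L_1R_0)^{\alpha/(\alpha-1)}\Bigl(\frac{\varepsilon}{\sigma R_0}\Bigr)^{\alpha/(\alpha-1)^2}.
$$
This bound is awkward to use directly. I would instead exploit the freedom in the definition and choose $\bar L_0$ so that the $\sigma$-term is at most $(L_1R_0)^2$ by construction. Solving $(\sigma R_0L_1^2/\bar L_0)^{\alpha/(\alpha-1)}\le (L_1R_0)^2$ gives the requirement
$$
\bar L_0\ge \sigma L_1^{\,2-2(\alpha-1)/\alpha}R_0^{\,1-2(\alpha-1)/\alpha}.
$$
I therefore set $\bar L_0:=\max\{L_0,\ \sigma L_1^{2/\alpha}R_0^{(2-\alpha)/\alpha}\}$, replacing the provisional definition above. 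With this choice the Phase-I $\sigma$-term is at most $(L_1R_0)^2$, so $K_1=\widetilde{\mathcal O}(1+(L_1R_0)^2)$, since $M$ is logarithmic times $\max\{1,L_1R_0\}$.

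\textbf{Step 3: Phase-II cost.} On the Phase-I success event, Theorem~\ref{thm:main} applies with $R=2R_0$ and failure probability $\delta/2$. It yields $\varepsilon$-accuracy after
$$
K=\widetilde{\mathcal O}\Bigl(\sqrt{\bar L_0R_0^2/\varepsilon}+(\sigma R_0/\varepsilon)^{\alpha/(\alpha-1)}\Bigr)
$$
iterations. The $L_0$ part of $\bar L_0$ gives the stated $\sqrt{L_0R_0^2/\varepsilon}$ term. For the other part, write $u:=\sigma R_0/\varepsilon$ and $v:=L_1R_0$. Then
$$
\sqrt{\sigma L_1^{2/\alpha}R_0^{(2-\alpha)/\alpha}R_0^2/\varepsilon}=\sqrt{u\,v^{2/\alpha}}=u^{1/2}v^{1/\alpha}.
$$
I apply Young's inequality with exponents $p=2\alpha/(\alpha-1)$ on $u^{1/2}$ and $p'=2\alpha/(\alpha+1)$ on $v^{1/\alpha}$, which are conjugate. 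This gives
$$
u^{1/2}v^{1/\alpha}\le u^{\alpha/(\alpha-1)}+v^{2/(\alpha+1)}\le u^{\alpha/(\alpha-1)}+1+v^2 .
$$
So this contribution is absorbed into the stated bound.

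\textbf{Step 4: conclude.} A union bound over the two phases gives success probability at least $1-\delta$, and summing the Phase-I and Phase-II costs gives the stated total complexity.

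The main obstacle is Step~2: choosing the exponents of $\bar L_0$ so that the Phase-I $\sigma$-term and the Phase-II overhead are balanced simultaneously. The Young-inequality check in Step~3 is what confirms that the choice $\bar L_0=\max\{L_0,\sigma L_1^{2/\alpha}R_0^{(2-\alpha)/\alpha}\}$ works. A secondary check is that the logarithmic factor $M$ contributes only logarithmic overhead, which holds because $M$ is logarithmic times $\max\{1,L_1R_0\}$.
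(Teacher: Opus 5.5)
\textbf{The gap is in Step~2.} Your overall strategy matches the paper's: inflate $L_0$ to a surrogate $\bar L_0$ so the switching threshold $c\bar L_0/L_1^2$ is reached in $\widetilde{\mathcal O}(1+(L_1R_0)^2)$ Phase-I steps, then absorb the extra cost $\sqrt{\bar L_0R_0^2/\varepsilon}$ with Young's inequality. Your Young computation in Step~3 is also correct. However, your final choice $\bar L_0=\max\{L_0,\sigma L_1^{2/\alpha}R_0^{(2-\alpha)/\alpha}\}$ drops the factor $M$. The Phase-I requirement is $K_1\gtrsim M(\sigma R_0L_1^2/\bar L_0)^{\alpha/(\alpha-1)}$, and your choice only bounds this by $M(L_1R_0)^2$. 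But $M=\max\{L_1R_0,\log(24/\delta)\}$ is not logarithmic: it equals $L_1R_0$ once $L_1R_0\ge\log(24/\delta)$. The requirement is then $(L_1R_0)^3$, a polynomial excess over the claimed $(L_1R_0)^2$. Your own remark that $M$ is ``logarithmic times $\max\{1,L_1R_0\}$'' shows the factor $\max\{1,L_1R_0\}$ cannot be hidden in $\widetilde{\mathcal O}$. So Step~2 yields only $K_1=\widetilde{\mathcal O}(1+(L_1R_0)^3)$.

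\textbf{The fix} is to put $M$ into the surrogate: $\bar L_0:=\max\{L_0,\ \sigma L_1^{2/\alpha}R_0^{(2-\alpha)/\alpha}M^{(\alpha-1)/\alpha}\}$.
\begin{itemize}
\item The Phase-I $\sigma$-term becomes $M\cdot(L_1R_0)^2/M=(L_1R_0)^2$.
\item The Phase-II overhead becomes $u^{1/2}v^{1/\alpha}M^{(\alpha-1)/(2\alpha)}$. If $M=v$, this is $u^{1/2}v^{(\alpha+1)/(2\alpha)}\le u^{\alpha/(\alpha-1)}+v$ by your same conjugate exponents $2\alpha/(\alpha-1)$ and $2\alpha/(\alpha+1)$. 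If $M=\log(24/\delta)$, it is a logarithmic factor times your current bound.
\end{itemize}

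\textbf{How the paper does it.} The corrected surrogate is essentially the paper's, but the paper obtains it a posteriori. It runs Phase~I with the original $L_0$ for $K_1\ge\max\{(L_1R_0)^2,\,L_1R_0M^{(\alpha-1)/(2\alpha)}\}$ steps and lets $B$ be the resulting deterministic accuracy guarantee. It then sets $\bar L_0:=L_0+32L_1^2B$. This makes $f(\bar x)-f^\star\le B\le\bar L_0/(32L_1^2)$ automatic, and $L_1^2B$ carries the needed factor $M^{(\alpha-1)/\alpha}$ without it having to be guessed. The overhead $L_1R_0\sqrt{B/\varepsilon}$ is absorbed using $L_1R_0M^{(\alpha-1)/(2\alpha)}\lesssim K_1$ and $x^r\le1+x$. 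Running Phase~I with $\bar L_0$, as you do, is also legitimate, since Assumption~\ref{asm: smoothness} holds for $(\bar L_0,L_1)$, but it is not needed.
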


\noindent\textbf{Proof overview of \cref{thm:main}.}\par
\noindent\textbf{1) A simplified accelerated potential.}
Let $g_i:=\operatorname{clip}(\nabla f(x_i,\xi_i),\lambda_i)$ and
$\theta_i:=g_i-\nabla f(x_i)$, so that $\theta_i$ is the error induced
by replacing the true gradient with the clipped stochastic gradient.
For any comparator $u\in\mathbb R^d$, define
$$
\mathcal R_t(u):= \sum_{i=1}^t\alpha_i\langle\theta_i,u-z_{i-1}\rangle +\sum_{i=1}^t\alpha_i^2\norm{\theta_i}^2.
$$
Compared with the stochastic error term in the comparator inequality
of~\citet[Lemma~F.1]{sadiev2023high}, our residual $\mathcal R_t(u)$
has a simpler form: it contains only the clipping error $\theta_i$ and
no additional term involving $\nabla f(x_i)$.

More importantly, the accelerated potential argument does not require
a global classical smoothness bound. It is enough that, at every
iteration, the particular pair $(x_i,y_i)$ satisfies
\begin{equation}
\label{eq:local-descent-overview}
f(y_i) \le f(x_i) + \langle\nabla f(x_i),y_i-x_i\rangle + 4L_0\norm{y_i-x_i}^2.
\end{equation}
Whenever~\cref{eq:local-descent-overview} holds up to time $t$,
\cref{lem:comparator}, together with the weight condition
$A_i\ge 16L_0\alpha_i^2$ satisfied by our standard
\algname{Clip-SSTM} weights, gives, for every $u\in\mathbb R^d$,
\begin{equation}
\label{eq:comparator-overview}
A_t\bigl(f(y_t)-f(u)\bigr) + \frac12\norm{z_t-u}^2 \le \frac12\norm{\bar x-u}^2 + \mathcal R_t(u).
\end{equation}
Thus, rather than replacing $(L_0,L_1)$-smoothness by a classical
smoothness constant on an entire localization ball, our proof only
needs to maintain~\cref{eq:local-descent-overview} along the
accelerated trajectory. This naturally leads to an induction:
assuming the descent inequality holds up to time $t$, we use
\cref{eq:comparator-overview} to obtain bounds on the objective gap
and the auxiliary iterates, and then use these bounds to establish the
descent inequality at iteration $t+1$. The main difficulty is therefore
to control the accumulated stochastic error strongly enough to close
this induction.

\textbf{2) Why two potentials are needed.}
Recall that $\norm{\bar x-x^\star}\le R$. A natural first attempt is to
apply~\cref{eq:comparator-overview} only with $u=x^\star$. If
$\mathcal R_t(x^\star)\le R^2/2$, then
$A_t\bigl(f(y_t)-f^\star\bigr)+\frac12\norm{z_t-x^\star}^2\le R^2$.
Hence,
$$
f(y_t)-f^\star \le \frac{R^2}{A_t} \quad\text{and}\quad \norm{z_t-x^\star} \le \sqrt{2}\,R.
$$
Since $A_t$ grows quadratically, the first bound already has the
desired accelerated form, while the second controls the distance
$\norm{x^\star-z_{i-1}}$ appearing in the stochastic error
$\mathcal R_t(x^\star)$.

However, these bounds alone are not sufficient to close the induction
described above. To recover~\cref{eq:local-descent-overview} at
iteration $t+1$, we first need to control $\norm{\nabla f(y_t)}$ and
then transfer this control from $y_t$ to the next query point
$x_{t+1}$. In our argument, this gradient control is obtained by
keeping the objective gap $f(y_t)-f^\star$ at the scale
$L_0/L_1^2$. When $A_t$ is small, however, the bound $R^2/A_t$ can be
much larger than this scale.

This issue is already visible at initialization. Phase~I guarantees
$
f(\bar x)-f^\star \le \frac{cL_0}{L_1^2},$
but since $A_0=0$, the $x^\star$-potential starts from
$\frac12\norm{\bar x-x^\star}^2$ and does not use this small objective
gap.

To address this difficulty, we introduce a second potential centered at
the switching point $\bar x$. Applying~\cref{eq:comparator-overview}
with $u=\bar x$, and using $z_0=\bar x$, gives
$$
A_t\bigl(f(y_t)-f(\bar x)\bigr) + \frac12\norm{z_t-\bar x}^2 \le \mathcal R_t(\bar x).
$$
Therefore, if $\mathcal R_t(\bar x)\le cA_tL_0/L_1^2$, then
$$
\begin{cases}
f(y_t)-f^\star \le f(\bar x)-f^\star + \dfrac{\mathcal R_t(\bar x)}{A_t} \le \dfrac{2cL_0}{L_1^2},\\[2mm]
\norm{z_t-\bar x} \le \dfrac{2\sqrt{cA_tL_0}}{L_1}.
\end{cases}
$$
The first bound keeps $y_t$ in the small-gap regime and therefore
controls $\norm{\nabla f(y_t)}$ at the scale $L_0/L_1$. Applying the
second bound up to time $t$ controls the auxiliary iterates relative
to $\bar x$. Since $y_t=A_t^{-1}\sum_{j=1}^t\alpha_jz_j$, this gives
$\norm{z_t-y_t}\le 4\sqrt{cA_tL_0}/L_1$. Moreover,
$x_{t+1}-y_t=\frac{\alpha_{t+1}}{A_{t+1}}(z_t-y_t)$, so the weight
conditions ensure that $x_{t+1}$ remains sufficiently close to $y_t$.
This allows the gradient control to be transferred from $y_t$ to
$x_{t+1}$; in particular, $\norm{\nabla f(x_{t+1})}\le L_0/L_1$.

Since the clipped update also keeps $y_{t+1}$ sufficiently close to
$x_{t+1}$, $(L_0,L_1)$-smoothness then gives
\cref{eq:local-descent-overview} at iteration $t+1$. This closes the
deterministic part of the induction.

The two potentials therefore play complementary roles. The
$\bar x$-potential propagates the small-gap condition obtained in
Phase~I, while the $x^\star$-potential provides the accelerated bound
and controls the distance to the minimizer. Together, they give
$$
f(y_t)-f^\star \le \min\left\{\frac{2cL_0}{L_1^2},\frac{R^2}{A_t}\right\},
$$
as well as bounds on both $\norm{z_t-\bar x}$ and
$\norm{z_t-x^\star}$.

Combining these bounds with $(L_0,L_1)$-smoothness and the weight
conditions gives two complementary bounds on the gradient at
$x_{t+1}$, where the next stochastic gradient is evaluated. The
$\bar x$-potential gives a bound of order $\sqrt{c}\,L_0/L_1$, while the $x^\star$-potential gives a bound that decays as $A_{t+1}^{-1/2}$. More precisely, up to numerical constants,
$$
\begin{cases}
\norm{\nabla f(x_{t+1})} \lesssim \dfrac{\sqrt{c}\,L_0}{L_1},\\[2mm]
\norm{\nabla f(x_{t+1})} \lesssim R\sqrt{\dfrac{L_0}{A_{t+1}}}.
\end{cases}
$$
Since both bounds hold simultaneously, we can use the smaller one.
Together with $A_{t+1}\gtrsim \ell^2L_0\alpha_{t+1}^2$, this suggests
the clipping scale $
\frac{1}{\alpha_{t+1}\ell} \min\left\{ \frac{\sqrt{cA_{t+1}L_0}}{L_1},R\right\}.$
Accordingly, for every $i=1,\ldots,K$, we choose
$$
\lambda_i:= \frac{1}{\alpha_i\ell}\min\left\{ \frac{\sqrt{cA_iL_0}}{L_1},R
\right\}.
$$
With the exact constants in
\cref{lem:gradient-below-clipping}, whenever the induction conditions
hold up to time $t$, this choice gives
$$
\norm{\nabla f(x_{t+1})}\le\frac{\lambda_{t+1}}{2}.
$$
Thus, before evaluating the stochastic gradient at $x_{t+1}$, the true
gradient is guaranteed to lie below half the clipping threshold. This
is precisely the condition needed for the clipping-bias and
second-moment bounds used in the stochastic analysis.

\textbf{3) Controlling the accumulated stochastic error.}
It remains to show that the residual bounds used above hold
simultaneously throughout the run. The difficulty is that the clipping
estimates require $\norm{\nabla f(x_i)}\le\lambda_i/2$, while this
gradient bound is itself obtained only as long as the preceding descent
and residual bounds remain valid. We handle this dependence with a
stopped process.

For $t=0,\ldots,K$, let $\mathcal E_t$ be the event that, for every
$i\le t$, the descent inequality~\cref{eq:local-descent-overview} holds
and
$$
\mathcal R_i(\bar x)\le\frac{cA_iL_0}{L_1^2}
\quad\text{and}\quad
\mathcal R_i(x^\star)\le\frac{R^2}{2}.
$$
For $t=0$ there are no conditions, so $\mathbf 1_{\mathcal E_0}=1$.
For $u\in\{\bar x,x^\star\}$, define the stopped residual
$$
\widehat{\mathcal R}_t(u):=\sum_{i=1}^t\mathbf 1_{\mathcal E_{i-1}}\left(\alpha_i\langle\theta_i,u-z_{i-1}\rangle+\alpha_i^2\norm{\theta_i}^2\right).
$$
The indicator ensures that each summand is analyzed only when the
previous induction conditions hold. In particular, on
$\mathcal E_{i-1}$, \cref{lem:gradient-below-clipping} gives
$\norm{\nabla f(x_i)}\le\lambda_i/2$, and hence the following clipping estimates hold:
$$
\begin{cases}
\norm{\E[\theta_i\mid\mathcal F_{i-1}]} \le 
\dfrac{2^\alpha\sigma^\alpha}{\lambda_i^{\alpha-1}},\\[2mm]
\E[\norm{\theta_i}^2\mid\mathcal F_{i-1}] \le 2\sigma^\alpha\lambda_i^{2-\alpha}.
\end{cases}
$$

We then subtract the conditional means and write
$\widehat{\mathcal R}_t(u)=M_t(u)+B_t(u)$, where $M_t(u)$ is a
martingale and $B_t(u)$ collects the clipping bias and conditional
second-moment terms. The localization bounds obtained from the two
potentials control the comparator distances appearing in both terms. In particular,
$$
\begin{cases}
\norm{\bar x-z_{i-1}}\lesssim s_i,\\[2mm]
\norm{x^\star-z_{i-1}}\lesssim R,
\end{cases}
\qquad
s_i:=\min\left\{\frac{\sqrt{cA_iL_0}}{L_1},R\right\}.
$$
Since $\alpha_i\lambda_i=s_i/\ell$, both the predictable terms and the
conditional variances can be controlled by the same quantity
$
S_t:=\sigma^\alpha\sum_{i=1}^t\frac{\alpha_i}{s_i\lambda_i^{\alpha-1}}.$

Thus, if $S_K\le 1/512$, the predictable part is sufficiently small,
while the martingale has controlled conditional variance and bounded
increments. Applying Freedman's inequality and taking a union bound over
$t=1,\ldots,K$ and $u\in\{\bar x,x^\star\}$ yields, with probability
at least $1-\delta$, simultaneously for every $t$,
$$
\begin{cases}
\widehat{\mathcal R}_t(\bar x)\le\dfrac{cA_tL_0}{L_1^2},\\[2mm]
\widehat{\mathcal R}_t(x^\star)\le\dfrac{R^2}{2}.
\end{cases}
$$

It remains to transfer these bounds from the stopped process to the
original residuals. Suppose that $\mathcal E_{t-1}$ holds. Then
$\widehat{\mathcal R}_t(u)=\mathcal R_t(u)$ for
$u\in\{\bar x,x^\star\}$. Moreover, the induction hypotheses together
with the weight and clipping-level conditions satisfy the assumptions
of \cref{lem:grad_bound_smoothness} with index $t-1$. Hence, \cref{eq:local-descent-overview} holds at iteration $t$. The two
residual bounds above therefore imply that $\mathcal E_t$ holds. Starting from
$\mathbf 1_{\mathcal E_0}=1$ and proceeding by induction, we obtain
$\mathcal E_K$ with probability at least $1-\delta$. Therefore, all the
descent and localization bounds established above hold simultaneously
throughout the run.

\textbf{4) Verifying the parameter choice and obtaining the rate.}
It remains to check that the parameters in \cref{thm:main} satisfy
$S_K\le 1/512$. Using $\lambda_i=s_i/(\alpha_i\ell)$ and the explicit
accelerated weights, $S_K$ splits into two contributions, corresponding
to the two terms in the definition of $s_i$. By the choice of $a$, each
contribution is at most $1/1024$. Hence $S_K\le 1/512$, and therefore
$\mathcal E_K$ holds with probability at least $1-\delta$.

On this event, the two potentials give
$$
f(y_K)-f^\star \le \min\left\{ \frac{2cL_0}{L_1^2},\frac{R^2}{A_K}\right\}.
$$
Since $A_K=K(K+3)/(2aL_0)$, substituting the definition of $a$ gives,
up to numerical constants,
$$
\frac{R^2}{A_K} \lesssim \frac{L_0R^2\ell^2}{K^2} + \frac{\sigma^2L_1^2R^2}{cL_0}
\frac{\ell^{2(\alpha-1)/\alpha}}{K^{2(\alpha-1)/\alpha}} + \frac{\sigma R\,\ell^{(\alpha-1)/\alpha}}
{K^{(\alpha-1)/\alpha}}.
$$

The middle term contains $L_1$, but combining it with the low-gap bound
$2cL_0/L_1^2$ and using $\min\{x,y\}\le\sqrt{xy}$ gives another term of
order $\frac{\sigma R\,\ell^{(\alpha-1)/\alpha}}{K^{(\alpha-1)/\alpha}}.$ Therefore,
$$
f(y_K)-f^\star =\widetilde{\mathcal O}\left(\frac{L_0R^2}{K^2}+\frac{\sigma R}{K^{(\alpha-1)/\alpha}}
\right).
$$
Thus, after the warm-start condition $f(\bar x)-f^\star\le cL_0/L_1^2$ has been reached, the accelerated Phase~II rate no longer depends on $L_1$. The $L_1$-dependence of the
two-phase method comes only from the Phase~I cost required to reach this warm-start regime.

\subsection{One-phase result}
\label{sec:one-phase-result}

We next analyze the one-phase procedure introduced in
Section~\ref{sec:prel}. In contrast to the two-phase
procedure, the method starts directly from $x_0$ and does not
require a small initial objective gap. The modified weight schedule
first controls the high-curvature transient and then recovers the
usual accelerated regime.

\begin{theorem}
\label{thm:one-phase}
Suppose that Assumptions~\ref{asm: convexity}, \ref{asm: smoothness}, and
\ref{asm:oracle} hold, where $L_0,L_1>0$.
Assume that $\norm{x_0-x^\star} \le R_0$, where $R_0>0$, and let
$\bar\Delta_0\ge f(x_0)-f^\star$ be a deterministic upper bound.

Let $K\ge1$, $\delta\in(0,1)$, and set $\ell:=8\log\frac{4K}{\delta},$ and
$
a:=\max\left\{128\ell^2,\;512^{1/\alpha}\frac{\sigma}{L_0R_0}\ell^{\frac{\alpha-1}{\alpha}}K^{\frac{\alpha+1}{\alpha}}\right\}.$
Choose $A_0>0$ satisfying $A_0\le\frac{1}{aL_0}$ and $A_0\bar\Delta_0\le\frac{R_0^2}{64}.$
For every $k\ge0$, let
$$
\alpha_{k+1}:=\min\left\{\dfrac{A_k}{64\ell(1+L_1R_0)},\sqrt{\dfrac{A_k}{aL_0}}\right\},
$$
and, for $i=1,\ldots,K$, let $\lambda_i:=\frac{R_0}{\alpha_i\ell}.$ Define
$
K_0:=\left\lceil128\ell(1+L_1R_0)\log\left(\frac{[64\ell(1+L_1R_0)]^2}{aL_0A_0}\right)\right\rceil.
$
Then, if $K\ge2K_0$, with probability at least $1-\delta$, the output $y_K$ of \algname{Clip-SSTM} satisfies
$$
f(y_K)-f^\star=\widetilde{\mathcal O}\left(\frac{L_0R_0^2}{K^2}+\frac{\sigma R_0}{K^{(\alpha-1)/\alpha}}\right),
$$
where $\widetilde{\mathcal O}$ suppresses logarithmic factors in $K$ and $1/\delta$. With the initialization below, the dependence on $\bar\Delta_0$
and $L_1$ enters only through $K_0$, which is accuracy-independent up to logarithmic factors.
\end{theorem}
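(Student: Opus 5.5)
The plan is to follow the induction-plus-stopped-process template from the proof overview of \cref{thm:main}, but with a single potential anchored at $x^\star$. The weight restriction $\alpha_{k+1}/A_k\le 1/(64\ell(1+L_1R_0))$ replaces the small-gap potential in keeping the accelerated trajectory where a local descent inequality holds.

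\textbf{Step 1: comparator inequality.} I will use the comparator inequality of \cref{lem:comparator}, now with $A_0>0$:
$$
A_t\bigl(f(y_t)-f^\star\bigr)+\tfrac12\norm{z_t-x^\star}^2\le A_0\bigl(f(x_0)-f^\star\bigr)+\tfrac12\norm{x_0-x^\star}^2+\mathcal R_t(x^\star),
$$
valid whenever the descent inequality
$$
f(y_i)\le f(x_i)+\langle\nabla f(x_i),y_i-x_i\rangle+4L_0\norm{y_i-x_i}^2
$$
holds for $i\le t$. The weight condition $A_i\ge aL_0\alpha_i^2\ge16L_0\alpha_i^2$ holds by construction.

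Let $\mathcal E_t$ be the event that, for all $i\le t$, the descent inequality holds and $\mathcal R_i(x^\star)\le R_0^2/4$. On $\mathcal E_t$, using $A_0\bar\Delta_0\le R_0^2/64$, I obtain
$$
\norm{z_i-x^\star}\le\sqrt2R_0,\qquad \norm{y_i-x^\star}\le\sqrt2R_0,\qquad f(y_i)-f^\star\le R_0^2/A_i .
$$

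\textbf{Step 2: deterministic closing of the induction.} This is the step I expect to be the main obstacle. The aim is to show $\norm{\nabla f(x_{t+1})}\le\lambda_{t+1}/2=R_0/(2\alpha_{t+1}\ell)$ and then recover the descent inequality at $t+1$.

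Without a small gap, I control gradients via $(L_0,L_1)$-smoothness together with convexity. On segments of length $r\lesssim 1/L_1$, the inequality $f(y)-f^\star\ge\frac{\norm{\nabla f(y)}^2}{2(L_0+L_1\norm{\nabla f(y)})}$ gives
$$
\norm{\nabla f(y_t)}\lesssim\max\Bigl\{\sqrt{L_0R_0^2/A_t},\;L_1R_0^2/A_t\Bigr\}.
$$
The step from $y_t$ to $x_{t+1}$ has length $\frac{\alpha_{t+1}}{A_{t+1}}\norm{z_t-y_t}\le\frac{2\sqrt2R_0}{64\ell(1+L_1R_0)}$, which is at most $1/(20L_1)$. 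Hence the gradient changes by at most a constant factor; this uses the standard consequence $\norm{\nabla f(x)}\le e^{L_1r}(\cdots)$.

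The step $y_{t+1}-x_{t+1}=\frac{\alpha_{t+1}^2}{A_{t+1}}g_{t+1}$ has norm at most
$$
\frac{\alpha_{t+1}}{A_{t+1}}\cdot\frac{R_0}{\ell}\le\frac{R_0}{64\ell^2(1+L_1R_0)},
$$
again $\lesssim 1/L_1$. So $(L_0,L_1)$-smoothness on that segment yields the descent inequality with constant $(L_0+L_1\norm{\nabla f(x_{t+1})})/2$, once I show $L_1\norm{\nabla f(x_{t+1})}\norm{y_{t+1}-x_{t+1}}\lesssim L_0\norm{y_{t+1}-x_{t+1}}$. The latter is exactly what I cannot guarantee, so I plan to bound the extra term $L_1\norm{\nabla f(x_{t+1})}\norm{y_{t+1}-x_{t+1}}^2$ differently. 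Rather than absorbing it into $4L_0$, I absorb it into the potential using the factor $\alpha_{t+1}/A_{t+1}\le 1/(64\ell(1+L_1R_0))$. Concretely, I will prove a variant of \cref{lem:comparator} with descent constant $L_0+L_1\norm{\nabla f(x_i)}$ and the weight condition $A_i\ge 16(L_0+L_1\norm{\nabla f(x_i)})\alpha_i^2$. I then check this condition via $\norm{\nabla f(x_i)}\lesssim\lambda_i$ and the small ratio $\alpha_i/A_i$. Verifying that these constants close, including the $\ell$ factors, is where I expect most of the work.

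The same gradient bound gives $\norm{\nabla f(x_{t+1})}\le\lambda_{t+1}/2$. The reason is that $\lambda_{t+1}\alpha_{t+1}=R_0/\ell$, while $\alpha_{t+1}\norm{\nabla f}\lesssim\alpha_{t+1}\max\{\sqrt{L_0/A_t},\,L_1R_0/A_t\}R_0$, and both terms are $\le R_0/(2\ell)$ by the two branches of the minimum defining $\alpha_{t+1}$.

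\textbf{Step 3: stochastic part.} I will mirror the overview of \cref{thm:main}. Define the stopped residual $\widehat{\mathcal R}_t(x^\star)$ with indicators $\1_{\mathcal E_{i-1}}$. Applying the clipping bias and variance bounds and Freedman's inequality with a union bound over $t$, I get $\widehat{\mathcal R}_t(x^\star)\le R_0^2/4$ for all $t$ with probability at least $1-\delta$, provided
$$
\sigma^\alpha\sum_i\alpha_i^\alpha\ell^{\alpha-1}R_0^{-\alpha}\le 1/512 .
$$
Since $\alpha_i\le\sqrt{A_i/(aL_0)}\le i/(aL_0)$, this sum is at most $\sigma^\alpha K^{\alpha+1}\ell^{\alpha-1}/(aL_0R_0)^\alpha$, which the second term in $a$ makes small. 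Induction then gives $\mathcal E_K$ on this event.

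\textbf{Step 4: rate.} During the first phase $A_k$ grows geometrically with ratio $1+1/(64\ell(1+L_1R_0))$. Therefore after $K_0$ steps it exceeds $[64\ell(1+L_1R_0)]^2/(aL_0)$. After that the square-root schedule applies, giving $\sqrt{A_{k+1}}\ge\sqrt{A_k}+1/(3\sqrt{aL_0})$. Hence for $K\ge2K_0$ we have $A_K\gtrsim K^2/(aL_0)$, and
$$
f(y_K)-f^\star\le\frac{R_0^2}{A_K}\lesssim\frac{aL_0R_0^2}{K^2},
$$
which after substituting $a$ gives the claimed bound.
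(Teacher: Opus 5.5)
Your plan is essentially the paper's proof. It uses a single $x^\star$-anchored potential and a comparator inequality whose descent constant is tied to $A_i/\alpha_i^2$ rather than $4L_0$. Your variant with $A_i\ge16(L_0+L_1\norm{\nabla f(x_i)})\alpha_i^2$ is exactly what \cref{lem:comparator2} encodes through the constant $A_i/(4\alpha_i^2)$, so $\mathcal E_t$ should be defined with that inequality rather than the $4L_0$ one of your Step~1. The rest also matches: a stopped residual handled by Freedman's inequality, and geometric-then-square-root growth of $A_k$.

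Your way of checking the weight condition is a valid shortcut. You insert $\norm{\nabla f(x_i)}\le\lambda_i/2$, so that $L_1\norm{\nabla f(x_i)}\alpha_i^2\le L_1R_0\alpha_i/(2\ell)$, and then use $\alpha_i/A_{i-1}\le[64\ell(1+L_1R_0)]^{-1}$ and $A_{i-1}\ge aL_0\alpha_i^2$. The paper (\cref{lem:one-phase-descent}) instead bounds $L_0+L_1\norm{\nabla f(y_t)}$ directly from the gap, using both branches of the weight rule.

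Three details need repair.

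\textbf{The residual threshold.} $R_0^2/4$ is tighter than the standard accounting certifies with the theorem's fixed $\ell=8\log(4K/\delta)$ and $\lambda_i=R_0/(\alpha_i\ell)$. With your radius $\norm{x^\star-z_{i-1}}\le\sqrt2R_0$, the increments are bounded by about $3\sqrt2R_0^2/\ell$. The linear term $\tfrac{2c}{3}\log\tfrac{2K}{\delta}$ in \cref{cor:freedman-hp} alone then tends to $\tfrac{\sqrt2}{4}R_0^2>\tfrac{R_0^2}{4}$ as $\ell$ grows. Use $R_0^2/2$ as the paper does. This gives $\Delta_t\le2R_0^2/A_t$ and $\norm{z_t-x^\star}\le\tfrac32R_0$, which your Steps~2 and~4 tolerate.

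\textbf{The transfer from $y_t$ to $x_{t+1}$.} \cref{lem:gradient-growth} controls $L_0+L_1\norm{\nabla f}$ up to a constant factor, not $\norm{\nabla f}$ itself. Moving from $y_t$ to $x_{t+1}$ therefore leaves an additive term of order $L_0\norm{x_{t+1}-y_t}\lesssim L_0R_0\alpha_{t+1}/A_t$. This term is harmless only once you use $A_t\ge aL_0\alpha_{t+1}^2$, which makes its contribution to $\alpha_{t+1}\ell\norm{\nabla f(x_{t+1})}/R_0$ of order $\ell/a$.

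\textbf{The weight bound in Step~3.} The inequality $\sqrt{A_1/(aL_0)}\le 1/(aL_0)$ fails when $A_0=1/(aL_0)$. Instead use $\alpha_i\le\sqrt{A_{i-1}/(aL_0)}$ together with $\sqrt{aL_0A_k}\le(k+2)/2$ from \cref{lem:one-phase-weights}. This still yields $\alpha_i\le i/(aL_0)$.
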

\begin{remark}\label{rem_init}
The initialization parameter $A_0$ can be chosen explicitly as $
A_0:=\min\left\{\frac{1}{aL_0},\frac{R_0^2}{64\bar\Delta_0}\right\},$
with the convention that $R_0^2/(64\bar\Delta_0)=+\infty$ when
$\bar\Delta_0=0$. This choice guarantees
$A_0\le 1/(aL_0)$ and $A_0\bar\Delta_0\le R_0^2/64$. Since
$\frac{1}{aL_0A_0}=\max\left\{1,\frac{64\bar\Delta_0}{aL_0R_0^2}\right\}$,
the transient becomes
$$K_0=\Bigg\lceil 128\ell(1+L_1R_0)\log\Big([64\ell(1+L_1R_0)]^2\max\left\{1,\frac{64\bar\Delta_0}{aL_0R_0^2}\right\}\Big)\Bigg\rceil .$$
Consequently, $K_0=\widetilde{\mathcal O}(1+L_1R_0)$,
where the notation suppresses logarithmic factors in
$K$, $1/\delta$, $1+L_1R_0$, and
$1+\bar\Delta_0/(L_0R_0^2)$.
The transient is therefore accuracy-independent up to
logarithmic factors: its dependence on the target accuracy
through the horizon $K$ is at most logarithmic.
\end{remark}

\begin{remark}
In the worst case, the logarithmic term involving $\bar\Delta_0$
can add a factor of order $L_1R_0$ to the transient.
Indeed, integrating the gradient bound in
\cref{lem:gradient-growth} along the segment from $x^\star$
to $x_0$ gives
$$
\Delta_0
\le \frac{L_0}{L_1^2}
\left(e^{L_1R_0}-1-L_1R_0\right).
$$
We can use this upper bound as $\bar\Delta_0$.
It depends only on $L_0$, $L_1$, and $R_0$, so we do not need
to know $f^\star$.
With this choice, the initialization above gives
$K_0=\widetilde{\mathcal O}((1+L_1R_0)^2)$,
where the notation hides logarithmic factors in $K$,
$1/\delta$, and $1+L_1R_0$.
If a tighter bound on the initial gap is available,
the transient can be smaller.
\end{remark}

\paragraph{Proof overview of \cref{thm:one-phase}.}
We prove by induction that the local descent inequality and a
bound on the accumulated error hold throughout the run.
Let $\Delta_t:=f(y_t)-f^\star$, $\theta_i:=g_i-\nabla f(x_i)$, and
$\mathcal R_t:=\sum_{i=1}^t\alpha_i\langle\theta_i,x^\star-z_{i-1}\rangle
+\sum_{i=1}^t\alpha_i^2\|\theta_i\|^2$.
For $t=0,\ldots,K$, define
$$
\mathcal E_t:=
\bigcap_{i=1}^t
\left\{
\begin{aligned}
f(y_i)&\le f(x_i)+\langle\nabla f(x_i),y_i-x_i\rangle
+\frac{A_i}{4\alpha_i^2}\|y_i-x_i\|^2,\\
\mathcal R_i&\le \frac{R_0^2}{2}
\end{aligned}
\right\},
$$
with $\mathcal E_0$ the whole sample space.
Our goal is to prove $\mathbb P(\mathcal E_K)\ge1-\delta$.

Fix $0\le t<K$ and suppose $\mathcal E_t$ holds.
By \cref{lem:comparator2} and the initialization
$A_0\Delta_0\le R_0^2/64$, for every $i=1,\ldots,t$,
$$
A_i\Delta_i+\frac12\|z_i-x^\star\|^2
\le A_0\Delta_0+\frac12\|x_0-x^\star\|^2+\mathcal R_i
\le \frac{65}{64}R_0^2.
$$
Thus $\Delta_i\le2R_0^2/A_i$ and
$\|z_i-x^\star\|\le3R_0/2$.
These bounds also hold at $i=0$ by initialization.
Since $y_t$ is a convex combination of $x_0,z_1,\ldots,z_t$,
we obtain $\|z_t-y_t\|\le3R_0$.

We next establish the descent inequality at iteration $t+1$.
The gradient--gap bound in \cref{lem:self-bounding-norm-form}
controls $\|\nabla f(y_t)\|$ using $\Delta_t\le2R_0^2/A_t$.
The identity
$x_{t+1}-y_t=(\alpha_{t+1}/A_{t+1})(z_t-y_t)$
and the weight restriction
$\alpha_{t+1}/A_t\le[64\ell(1+L_1R_0)]^{-1}$
keep $x_{t+1}$ close to $y_t$.
Hence \cref{lem:gradient-growth}, together with
$A_t\ge aL_0\alpha_{t+1}^2$, gives
$L_0+L_1\|\nabla f(x_{t+1})\|
\le A_{t+1}/(8\alpha_{t+1}^2)$.
Clipping also bounds the distance from $x_{t+1}$ to $y_{t+1}$.
These two bounds give the next descent inequality
by \cref{lem:one-phase-descent}.
Under $a\ge128\ell^2$, \cref{lem:one-phase-gradient} further gives
$\|\nabla f(x_{t+1})\|\le\lambda_{t+1}/2$,
where $\lambda_i=R_0/(\alpha_i\ell)$.

To control the accumulated error, define
$\widehat{\mathcal R}_t:=
\sum_{i=1}^t\mathbf 1_{\mathcal E_{i-1}}
\bigl(\alpha_i\langle\theta_i,x^\star-z_{i-1}\rangle
+\alpha_i^2\|\theta_i\|^2\bigr)$.
Each term is included only when the induction conditions hold
through iteration $i-1$.
On $\mathcal E_{i-1}$, the preceding argument gives
$\|\nabla f(x_i)\|\le\lambda_i/2$, so the conditional clipping-bias
and second-moment bounds apply.
Using Freedman's inequality (\cref{cor:freedman-hp}),
\cref{lem:one-phase-residual} gives an event $\mathcal G$ with
$\mathbb P(\mathcal G)\ge1-\delta$ on which
$\widehat{\mathcal R}_t\le R_0^2/2$ for every $t\le K$, provided
$\sigma^\alpha\ell^{\alpha-1}R_0^{-\alpha}
\sum_{i=1}^K\alpha_i^\alpha\le1/512$.
By \cref{lem:one-phase-weights},
$\sum_{i=1}^K\alpha_i^\alpha\le K^{\alpha+1}/(aL_0)^\alpha$,
and the chosen value of $a$ ensures this condition.

We now complete the induction on $\mathcal G$.
The event $\mathcal E_0$ holds by definition.
Suppose $\mathcal E_t$ holds for some $0\le t<K$.
Since the events $\mathcal E_i$ are nested, every indicator
in $\widehat{\mathcal R}_{t+1}$ equals one.
Therefore, on $\mathcal G$,
$\mathcal R_{t+1}=\widehat{\mathcal R}_{t+1}\le R_0^2/2$.
The preceding deterministic argument also gives the descent
inequality at iteration $t+1$, so $\mathcal E_{t+1}$ holds.
Thus $\mathcal G\subseteq\mathcal E_K$, proving
$\mathbb P(\mathcal E_K)\ge1-\delta$.

Finally, set $b:=64\ell(1+L_1R_0)$.
While $A_t<b^2/(aL_0)$, the weights give
$A_{t+1}=(1+1/b)A_t$.
After this threshold is reached, the square-root branch remains
active and $A_t$ grows quadratically.
By \cref{lem:one-phase-weights},
$A_K\ge4(K-K_0)^2/(25aL_0)$ for $K\ge K_0$.
On $\mathcal E_K$, the potential bound gives
$\Delta_K\le2R_0^2/A_K$.
Hence, for $K\ge2K_0$, the choice of $a$ yields
$$
f(y_K)-f^\star
\lesssim \frac{aL_0R_0^2}{K^2}
\lesssim
\frac{L_0R_0^2\ell^2}{K^2}
+\frac{\sigma R_0\ell^{(\alpha-1)/\alpha}}
{K^{(\alpha-1)/\alpha}},
$$
with probability at least $1-\delta$.
With the initialization in \cref{rem_init}, $L_1$ appears only
in the transient $K_0$, which is accuracy-independent up to
logarithmic factors.
\section{Related Work}
\label{sec:related-work}

\paragraph{Optimization under generalized smoothness.}
The $(L_0,L_1)$-smoothness model captures objectives whose
local curvature can grow with the gradient norm, a dependence highlighted by
\citet{zhang2019gradient}. A first-order formulation was introduced by
\citet{zhang2020improved}. For convex problems, clipping and related first-order methods have been
studied in
\citep{koloskova2023revisiting,gorbunov2025methods,
vankov2025optimizing,lobanov2024linear}.
In particular, recent works
\citep{gorbunov2025methods,vankov2025optimizing,lobanov2024linear}
obtain nonaccelerated guarantees without exponential dependence on $L_1R$.
Deterministic acceleration has also been investigated.
\citet{li2023convex} established accelerated convergence
under an $\ell$-smoothness condition with subquadratic $\ell$,
with a rate coefficient depending on an effective smoothness
bound determined by the initialization. \citet{gorbunov2025methods} proposed an accelerated Similar Triangles method whose guarantee retains an exponential dependence on $L_1R$, whereas
\citet{vankov2025optimizing} removed this dependence using a
one-dimensional line search with function values.
\citet{pmlr-v306-tyurin26b} obtained accelerated guarantees
under $\ell$-smoothness with a leading high-accuracy term
depending on $\ell(0)$ and an accuracy-independent transient.

\paragraph{Stochastic optimization under generalized smoothness.}
Several works study stochastic methods beyond classical smoothness
\citep{zhang2020adaptive,chen2023generalized,li2023convergence}.
For convex $(L_0,L_1)$-smooth objectives,
\citet{gaash2025convergence} establish high-probability guarantees under
sub-Gaussian noise, while
\citet{kadi2026highprobabilityconvergenceclippedsgd} obtain
high-probability guarantees for standard \algname{Clip-SGD} under bounded central
$\alpha$-th moment noise, $\alpha\in(1,2]$, without exponential dependence
on $L_1R$.
\citet{yu2025convergence} analyze stochastic accelerated-gradient methods
under relaxed affine variance assumptions, but their convex guarantee does
not achieve an accelerated rate.
Among stochastic accelerated results, \citet{dvinskikh2026localize} obtain high-probability acceleration under norm-sub-Gaussian noise without exponential dependence
on $L_1R$. However, their convex oracle complexity retains
multiplicative factors $(1+L_1\bar R)^{1/2}$ and
$(1+L_1\bar R)^3$ in the optimization and statistical terms,
respectively, where $\bar R$ is their localization radius.
Our guarantees allow heavy-tailed noise and, up to logarithmic
factors, have $L_1$-independent leading high-accuracy terms.

\paragraph{High-probability methods under heavy-tailed noise.}
Gradient clipping has become a standard tool for high-probability
optimization under heavy-tailed stochastic gradients.
Early work includes truncated stochastic mirror descent
\citep{nazin2019algorithms} and high-probability analyses of \algname{Clip-SGD} and
stochastic Similar Triangles methods
\citep{gorbunov2020stochastic}.
Subsequent work established high-probability guarantees for
clipping-based stochastic methods under bounded central
$\alpha$-th moment assumptions with $\alpha<2$
\citep{cutkosky2021high,sadiev2023high,nguyen2023improved,
chezhegov2024gradient}.
In particular, under classical $L$-smoothness, accelerated
high-probability rates under heavy-tailed noise have been established in
\citep{sadiev2023high,nguyen2023improved}.
Our results connect this line of work with generalized smoothness:
under $(L_0,L_1)$-smoothness and bounded central $\alpha$-th moment noise,
$\alpha\in(1,2]$, we recover, up to logarithmic factors, the classical
accelerated rate while avoiding exponential dependence on $L_1R$.

\paragraph{Concurrent work.} \label{par:concurrent-work}
Concurrently, \citet{lobanov2026acceleratedstochasticmethodh0h1smoothness}
established high-probability accelerated guarantees under
$(H_0,H_1)$-smoothness and a more general heavy-tailed noise model,
allowing both gradient-dependent and objective-gap-dependent noise
components. Their method combines clipping with projections and
phase restarts. Our approaches use different mechanisms: a warm
start followed by a single accelerated run, or a single run with
modified deterministic weights. Our results show that, under
$(L_0,L_1)$-smoothness and a uniform bounded central $\alpha$-th
noise moment, neither projections nor repeated restarts are
necessary to recover the classical accelerated rate up to
logarithmic factors, with $L_1$ confined to an initial transient.

\section{Conclusion}

We established high-probability accelerated convergence guarantees for convex $(L_0,L_1)$-smooth optimization with unbiased stochastic gradients under heavy-tailed noise. Our results show that the exponential dependence on $L_1R$ arising from direct localization is not inherent. In the two-phase approach, \algname{Clip-SGD} provides a warm start in a low-curvature region, after which \algname{Clip-SSTM} attains the classical accelerated rate up to logarithmic factors:
$$
\widetilde{\mathcal O}\!\left(\frac{L_0R^2}{K^2}+\frac{\sigma R}{K^{(\alpha-1)/\alpha}}\right).
$$
A warm start is not necessary: a modified weight schedule allows \algname{Clip-SSTM} to start directly from the initial point, control the high-curvature transient, and recover the same accelerated regime. With the prescribed initialization, both approaches confine the $L_1$-dependence to an accuracy-independent transient, up to logarithmic factors.

These results show that acceleration is compatible with generalized smoothness and heavy-tailed noise, and that classical clipped accelerated schemes retain their high-accuracy behavior beyond global smoothness. A natural direction is to develop adaptive variants requiring less prior knowledge of problem parameters while preserving these high-probability guarantees.

\bibliographystyle{apalike}
\bibliography{conference}

\clearpage

\appendix

\section{Notation, Useful Identities, and Preliminary Results}
\label{sec:prelim-results}

\paragraph{Notation.}

\Cref{tab:notation} summarizes the main notation for the problem,
initialization, algorithm, and stochastic analysis.
Most notation is shared by the two-phase and one-phase methods.
When definitions differ, as for the filtration $\mathcal F_t$,
the table describes their role; precise definitions are given
in the corresponding analysis.
\begin{table}[H]
\centering
\caption{Main notation. Quantities whose definitions differ between
the two procedures are specified in the corresponding analysis.}
\label{tab:notation}
\renewcommand{\arraystretch}{1.25}
\begin{tabular}{@{}p{0.20\textwidth}p{0.75\textwidth}@{}}
\toprule
Notation & Meaning \\
\midrule
\multicolumn{2}{@{}l}{\textbf{Problem and initialization}} \\
$\Delta(x)$
& Objective gap $f(x)-f^\star$. \\
$\delta$
& Target failure probability. \\
$x_0$, $R_0$
& Initial point and initial distance bound
$\|x_0-x^\star\|\le R_0$ in both methods. \\
$\bar x$
& Output of Phase I and starting point of Phase II. \\
$v_0$
& Starting point of \algname{Clip-SSTM}:
$v_0=\bar x$ in Phase II and $v_0=x_0$ in the one-phase procedure. \\
$R$
& Distance bound $\|\bar x-x^\star\|\le R$
at the start of Phase II. \\
$\Delta_0$, $\bar\Delta_0$
& Initial gap $\Delta_0=f(x_0)-f^\star$ and a deterministic
upper bound $\bar\Delta_0\ge\Delta_0$ used in the one-phase procedure. \\
\midrule
\multicolumn{2}{@{}l}{\textbf{Algorithm and parameters}} \\
$K$
& Number of \algname{Clip-SSTM} iterations. \\
$x_t,y_t,z_t$
& Gradient query point, output iterate, and auxiliary iterate. \\
$\alpha_t$, $A_t$
& Positive iteration weight and accumulated weight
$A_t=A_0+\sum_{i=1}^{t}\alpha_i$. \\
$\lambda_t$
& Clipping threshold at iteration $t$. \\
$a$, $\ell$
& Weight-scaling and confidence parameters,
chosen in the corresponding theorem. \\
$c$
& Constant specifying the small-gap threshold $cL_0/L_1^2$
in the two-phase analysis. \\
$K_0$
& Bound on the initial transient in the one-phase analysis. \\
\midrule
\multicolumn{2}{@{}l}{\textbf{Stochastic analysis}} \\
$G_t$, $g_t$
& Stochastic gradient $G_t=\nabla f(x_t,\xi_t)$ and its
clipped version $g_t=\operatorname{clip}(G_t,\lambda_t)$. \\
$\theta_t$
& Error relative to the true gradient:
$\theta_t=g_t-\nabla f(x_t)$. \\
$\mathcal F_t$
& History after $t$ \algname{Clip-SSTM} iterations;
$\mathcal F_0$ contains the Phase-I history in the two-phase
analysis and is trivial in the one-phase analysis. \\
\bottomrule
\end{tabular}
\end{table}
\clearpage
\paragraph{Useful identities for \algname{Clip-SSTM}.}
We collect in \cref{tab:sstm-identities} the identities used
throughout the proofs. They follow directly from the updates of
Clip-SSTM and hold for both weight schedules.
Here $y_0=z_0=v_0$ and
$g_{t+1}:=\operatorname{clip}
(\nabla f(x_{t+1},\xi_{t+1}),\lambda_{t+1})$.

\begin{table}[ht]
\centering
\caption{Useful identities for \algname{Clip-SSTM}.}
\label{tab:sstm-identities}
\renewcommand{\arraystretch}{2.2}
\begin{tabular}{@{}ll@{}}
\toprule
Quantity & Identity \\
\midrule
Accumulated weight
&
$\displaystyle
A_t=A_0+\sum_{i=1}^{t}\alpha_i
$
\\
Auxiliary update
&
$\displaystyle
z_{t+1}-z_t=-\alpha_{t+1}g_{t+1}
$
\\
Query displacement
&
$\displaystyle
x_{t+1}-y_t
=\frac{\alpha_{t+1}}{A_{t+1}}(z_t-y_t)
$
\\
Output displacement
&
$\displaystyle
y_{t+1}-x_{t+1}
=\frac{\alpha_{t+1}}{A_{t+1}}(z_{t+1}-z_t)
=-\frac{\alpha_{t+1}^{2}}{A_{t+1}}g_{t+1}
$
\\
Comparator identity
&
$\displaystyle
A_t y_t+\alpha_{t+1}u-A_{t+1}x_{t+1}
=\alpha_{t+1}(u-z_t)
$
\\
Averaging identity
&
$\displaystyle
y_t=\frac{A_0}{A_t}v_0
+\frac{1}{A_t}\sum_{i=1}^{t}\alpha_i z_i,
\qquad \text{if } A_t\neq 0
$
\\
Squared-distance identity
&
$\displaystyle
\frac12\|z_{t+1}-u\|^2-\frac12\|z_t-u\|^2
=\alpha_{t+1}\langle g_{t+1},u-z_t\rangle
+\frac{\alpha_{t+1}^{2}}{2}\|g_{t+1}\|^2
$
\\
\bottomrule
\end{tabular}
\end{table}

\begin{proposition}[{\cite[Lemma~2.2]{gorbunov2025methods}}]
\label{prop:gorbunov}
Suppose \cref{asm: smoothness} holds.
Then, for all $x\in\mathbb R^d$,
$$
\nu\norm{\nabla f(x)}^2 \le 2\left(L_0+L_1\norm{\nabla f(x)}\right)\Delta(x),
$$
where $\nu$ is the solution of $x\exp(x)=1$.
\end{proposition}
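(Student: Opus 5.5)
The plan is a one-step descent argument. We take an explicit gradient step from $x$ whose length is small enough that the local curvature along the step stays within a constant factor of $L:=L_0+L_1\norm{\nabla f(x)}$. We then compare the resulting decrease with $\Delta(x)\ge f(x)-f(y)$. Write $g:=\nabla f(x)$; if $g=0$ there is nothing to prove.

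\textbf{Step 1 (gradient growth along a segment).} Fix $y$, set $r:=\norm{y-x}$, $u_s:=x+s(y-x)$ and $\psi(s):=\sup_{\tau\in[0,s]}\norm{\nabla f(u_\tau)-g}$. Since $f$ is differentiable, $\nabla f$ is continuous along the segment by \cref{asm: smoothness}, so $\psi$ is finite, nondecreasing and continuous.
\begin{itemize}
\item For $0\le s<s'\le1$, \cref{asm: smoothness} applied to the pair $u_s,u_{s'}$ gives
$$
\norm{\nabla f(u_{s'})-\nabla f(u_s)}\le \bigl(L+L_1\psi(s')\bigr)(s'-s)r,
$$
because every point of $[u_s,u_{s'}]$ has gradient norm at most $\norm{g}+\psi(s')$.
\item Hence the upper Dini derivative satisfies $D^+\psi(s)\le r\,(L+L_1\psi(s))$.
\item A comparison (Gr\"onwall) argument for $\log\bigl(L+L_1\psi(s)\bigr)$ then yields $L+L_1\psi(s)\le L e^{L_1rs}$, that is,
$$
\norm{\nabla f(u_s)-g}\le \frac{L}{L_1}\bigl(e^{L_1rs}-1\bigr).
$$
\end{itemize}
I expect this step to be the main technical obstacle. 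The $\sup$ in the assumption makes the inequality implicit, so the first thing to try is to work with the running supremum $\psi$ and the Dini-derivative form of Gr\"onwall. If that proves awkward, a fallback is a fine-partition argument: split $[0,1]$ into pieces short enough that $L_1(\text{piece length})\,r<1$ and telescope.

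\textbf{Step 2 (local descent inequality).} Integrate along the segment:
$$
f(y)-f(x)-\langle g,y-x\rangle=\int_0^1\langle\nabla f(u_s)-g,y-x\rangle\,ds\le \frac{L}{L_1^2}\bigl(e^{L_1r}-1-L_1r\bigr).
$$
The elementary bound $e^{t}-1-t\le \tfrac{t^2}{2}e^{t}$ then gives $f(y)\le f(x)+\langle g,y-x\rangle+\tfrac{L}{2}e^{L_1r}r^2$.

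\textbf{Step 3 (choice of step).} Take $y=x-\eta g$ with $\eta:=\nu/L$, so that $r=\nu\norm{g}/L$.
\begin{itemize}
\item Then $L_1r=\nu L_1\norm{g}/(L_0+L_1\norm{g})\le\nu$, hence $e^{L_1r}\le e^\nu=1/\nu$ by the definition of $\nu$.
\item Substituting into Step 2,
$$
f^\star\le f(y)\le f(x)-\frac{\nu\norm{g}^2}{L}+\frac{L}{2\nu}\cdot\frac{\nu^2\norm{g}^2}{L^2}=f(x)-\frac{\nu\norm{g}^2}{2L}.
$$
\end{itemize}
Rearranging gives $\nu\norm{\nabla f(x)}^2\le 2\bigl(L_0+L_1\norm{\nabla f(x)}\bigr)\Delta(x)$, which is the claim.
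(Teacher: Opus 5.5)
Your argument is correct and matches the standard proof behind this result. The paper does not reprove the proposition but cites it from \citet[Lemma~2.2]{gorbunov2025methods}, where it is obtained the same way: establish $f(y)\le f(x)+\ip{\nabla f(x)}{y-x}+\frac{L_0+L_1\norm{\nabla f(x)}}{2}e^{L_1\norm{y-x}}\norm{y-x}^2$, then plug in $y=x-\frac{\nu}{L_0+L_1\norm{\nabla f(x)}}\nabla f(x)$ so that $e^{L_1\norm{y-x}}\le e^\nu=1/\nu$; your Dini--Gr\"onwall treatment of the gradient-growth step is a harmless variant of the fine-partition argument the paper uses in \cref{lem:gradient-growth}, and both tacitly assume that the supremum in \cref{asm: smoothness} is finite on short segments.
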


\begin{lemma}[Consequence of
{\cite[Lemma~2.1]{gorbunov2025methods}}]
\label{lem:local-smoothness-short-segment}
Assume that \cref{asm: smoothness} holds.
Let $x,y\in\mathbb R^d$, and set $s:=y-x$.
If $L_1\norm{s}\le1$, then
$$
f(y) \le f(x)+\ip{\nabla f(x)}{s}+2\left(L_0+L_1\norm{\nabla f(x)}\right)\norm{s}^2.
$$
\end{lemma}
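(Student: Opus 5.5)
We plan to combine a growth bound on $\nabla f$ along the segment $[x,y]$ with the integral form of the Taylor remainder; this is essentially the content of \cite[Lemma~2.1]{gorbunov2025methods}. Write $a:=L_0+L_1\norm{\nabla f(x)}$ and $r:=L_1\norm{s}\le 1$. By convexity of the norm it suffices to show
$$
f(y)-f(x)-\ip{\nabla f(x)}{s}\le \frac{a}{L_1^2}\left(e^{r}-1-r\right).
$$
Indeed, $e^r-1-r=\sum_{k\ge2}r^k/k!\le r^2\sum_{k\ge2}1/k!=(e-2)r^2$ for $r\le1$, and $e-2<2$. This yields the claim with constant $2$ and some slack.

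\textbf{Step 1 (gradient growth along the segment).} Let $\phi(t):=\norm{\nabla f(x+ts)-\nabla f(x)}$ for $t\in[0,1]$. We aim for $\phi(t)\le \frac{a}{L_1}\bigl(e^{L_1t\norm{s}}-1\bigr)$, a Gr\"onwall-type bound. On a short subsegment $[u,v]\subseteq[x,y]$ of length $h$ with $L_1h<1$, we apply Assumption~\ref{asm: smoothness} to each pair $(u,w)$ with $w\in[u,v]$. Taking the supremum over $w$ and setting $M:=\sup_{w\in[u,v]}\norm{\nabla f(w)}$ gives
$$
M\le \norm{\nabla f(u)}+(L_0+L_1M)h.
$$
Provided $M<\infty$, this rearranges to $M\le(\norm{\nabla f(u)}+L_0h)/(1-L_1h)$. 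Finiteness of $M$ holds because $\nabla f$ is continuous; we use the continuously differentiable formulation, and a priori finiteness on compact sets is exactly what is needed here. Substituting back gives a one-step increment bound
$$
\norm{\nabla f(v)-\nabla f(u)}\le \bigl(L_0+L_1\norm{\nabla f(u)}\bigr)\frac{h}{1-L_1h}.
$$
We then partition $[x,x+ts]$ into $N$ equal pieces and iterate this bound on the quantity $L_0+L_1\norm{\nabla f(\cdot)}$. It grows by a factor of at most $1+\frac{L_1h}{1-L_1h}$ per piece, so the accumulated increment is bounded by $\frac{a}{L_1}\bigl[(1-L_1t\norm{s}/N)^{-N}-1\bigr]$. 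Letting $N\to\infty$ gives the claimed exponential bound on $\phi(t)$.

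\textbf{Step 2 (integration).} Since $f$ is differentiable along the segment with continuous derivative,
$$
f(y)-f(x)-\ip{\nabla f(x)}{s}=\int_0^1\ip{\nabla f(x+ts)-\nabla f(x)}{s}\,dt\le\norm{s}\int_0^1\phi(t)\,dt .
$$
Inserting Step 1 gives
$$
\frac{a\norm{s}}{L_1}\int_0^1\bigl(e^{rt}-1\bigr)dt=\frac{a}{L_1^2}\bigl(e^r-1-r\bigr),
$$
and the elementary inequality from the first paragraph finishes the proof.

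The main obstacle is Step 1. The assumption involves a supremum over the segment, so a naive one-shot rearrangement produces the factor $1/(1-r)$, which blows up at $r=1$. The subdivision-and-limit argument (equivalently, a Gr\"onwall inequality for the nondecreasing function $\sup_{\tau\le t}\phi(\tau)$) is what delivers the sharper exponential factor. Alternatively, if \cite[Lemma~2.1]{gorbunov2025methods} is taken to already state the bound
$$
f(y)\le f(x)+\ip{\nabla f(x)}{s}+\frac{a}{2}e^{L_1\norm{s}}\norm{s}^2,
$$
then the lemma follows at once from $e/2\le2$.
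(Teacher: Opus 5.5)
Your proposal is correct, but your main argument takes a different route from the paper. The paper gives no argument beyond the citation. It takes the descent inequality of \cite[Lemma~2.1]{gorbunov2025methods}, $f(y)\le f(x)+\ip{\nabla f(x)}{s}+\frac{L_0+L_1\norm{\nabla f(x)}}{2}e^{L_1\norm{s}}\norm{s}^2$, and uses $\frac12 e^{L_1\norm{s}}\le\frac e2\le 2$. That is exactly the shortcut in your last sentence.

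Your main route instead re-derives the inequality from \cref{asm: smoothness} alone.
\begin{itemize}
\item Step~1 is the same subdivision-and-limit argument the paper uses for \cref{lem:gradient-growth}. The difference is that you track the increment $\norm{\nabla f(x+ts)-\nabla f(x)}$, not only $L_0+L_1\norm{\nabla f(\cdot)}$. Your geometric sum, written in your notation $a=L_0+L_1\norm{\nabla f(x)}$ and $h=t\norm{s}/N$, is $ah\sum_{k=1}^{N}(1-L_1h)^{-k}=\frac{a}{L_1}\bigl[(1-L_1h)^{-N}-1\bigr]$, and it checks out.
\item Step~2 integrates this to get the sharper bound $\frac{a}{L_1^2}(e^{r}-1-r)$, which gives the constant $e-2$ in place of $e/2$.
\end{itemize}

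What your route buys is self-containedness. It also handles the finiteness of the supremum explicitly; the paper's proof of \cref{lem:gradient-growth} needs this silently when it rearranges. The cost is the regularity caveat you flag: continuity of $\nabla f$ is not part of \cref{asm: smoothness} by itself. This is harmless here, because every use in the paper also assumes \cref{asm: convexity}, and a differentiable convex function on $\mathbb R^d$ is automatically continuously differentiable.

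Two minor nits:
\begin{itemize}
\item Your formulas divide by $L_1$, so the case $L_1=0$, which \cref{asm: smoothness} allows, needs a separate line. It is the classical bound with $\frac{L_0}{2}\norm{s}^2$, or equivalently the $L_1\to0$ limit of your estimate.
\item The reduction in your first paragraph follows from the series bound $e^r-1-r\le(e-2)r^2$, not from ``convexity of the norm.''
\end{itemize}
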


\begin{lemma}
\label{lem:self-bounding-norm-form}
Assume that \cref{asm: smoothness} holds.
Then, for every $x\in\mathbb R^d$,
$$
\norm{\nabla f(x)}
\le\frac{L_1\Delta(x)+\sqrt{L_1^2\Delta(x)^2+2\nu L_0\Delta(x)}}{\nu}.
$$
\end{lemma}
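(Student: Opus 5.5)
The inequality comes from solving the quadratic in \cref{prop:gorbunov} for the gradient norm. Fix $x$ and write $G:=\norm{\nabla f(x)}$ and $\Delta:=\Delta(x)\ge0$; note $\Delta\ge0$ because $f^\star$ is the minimum value. \Cref{prop:gorbunov} gives
$$
\nu G^2 - 2L_1\Delta\, G - 2L_0\Delta \le 0 .
$$
I will read this as the statement that the quadratic $p(s):=\nu s^2-2L_1\Delta s-2L_0\Delta$ is nonpositive at $s=G$.

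Since $\nu>0$, the polynomial $p$ is an upward-opening parabola. Its discriminant is $4L_1^2\Delta^2+8\nu L_0\Delta\ge0$, so it has two real roots
$$
s_\pm=\frac{L_1\Delta\pm\sqrt{L_1^2\Delta^2+2\nu L_0\Delta}}{\nu}.
$$
The set where $p(s)\le0$ is exactly the interval $[s_-,s_+]$. Because $p(G)\le0$, we get $G\le s_+$, which is the claimed bound.

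The only point needing care is the degenerate case $\Delta=0$. There the inequality reads $\nu G^2\le0$, so $G=0$, and the right-hand side of the claim is $0$; the bound still holds. This case is also covered by the general root argument, since then $s_-=s_+=0$.

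I do not expect any real obstacle. The only step to check is that the discriminant is nonnegative, which holds because $L_0,L_1\ge0$ and $\Delta\ge0$.
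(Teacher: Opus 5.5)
Your proof is correct and is essentially the paper's own argument: rewrite the bound of Proposition~\ref{prop:gorbunov} as $p(\norm{\nabla f(x)})\le0$ for the upward-opening quadratic $p$, and conclude that $\norm{\nabla f(x)}$ is at most the larger root $u_+$. Your explicit checks that the discriminant is nonnegative and that the case $\Delta(x)=0$ is covered are small additions that the paper leaves implicit.
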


\begin{proof}[Proof of \cref{lem:self-bounding-norm-form}]
By \cref{prop:gorbunov},
$$
\nu\norm{\nabla f(x)}^2
-2L_1\Delta(x)\norm{\nabla f(x)}
-2L_0\Delta(x)\le0.
$$
Define the quadratic polynomial
$$
p(u):=\nu u^2-2L_1\Delta(x)u-2L_0\Delta(x).$$
The preceding inequality states that $p(\norm{\nabla f(x)})\le0$.
The roots of $p$ are
$$
u_\pm=\frac{L_1\Delta(x)\pm\sqrt{L_1^2\Delta(x)^2+2\nu L_0\Delta(x)}}{\nu}.
$$
We have $p(u)\le0$ only for $u\in[u_-,u_+]$.
Therefore,
$$
\norm{\nabla f(x)} \le u_+ =\frac{L_1\Delta(x)+\sqrt{L_1^2\Delta(x)^2+2\nu L_0\Delta(x)}}{\nu}.
$$
\end{proof}

\begin{lemma}
\label{lem:gradient-growth}
Assume that \cref{asm: smoothness} holds.
Then, for all $x,y\in\mathbb R^d$,
$$
L_0+L_1\norm{\nabla f(y)}
\le e^{L_1\norm{y-x}}
\bigl(L_0+L_1\norm{\nabla f(x)}\bigr).
$$
\end{lemma}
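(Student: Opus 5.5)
We prove \cref{lem:gradient-growth} by a Grönwall-type argument along the segment from $x$ to $y$. Set $s:=y-x$ and, for $\tau\in[0,1]$, $\phi(\tau):=L_0+L_1\norm{\nabla f(x+\tau s)}$. The claim is $\phi(1)\le e^{L_1\norm{s}}\phi(0)$. If $s=0$ there is nothing to prove, so assume $\norm{s}>0$. Note that $\phi$ is continuous, because $\nabla f$ is continuous: \cref{asm: smoothness} with $y\to x$ gives local Lipschitz continuity wherever $\sup_{u\in[x,y]}\norm{\nabla f(u)}$ is finite, and this supremum is finite on compact segments since $f$ is differentiable and convex, hence $C^1$.

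First we derive a local increment bound. Fix $\tau$ and a small $h>0$, and apply \cref{asm: smoothness} to the points $x+\tau s$ and $x+(\tau+h)s$. This gives
\[
\phi(\tau+h)-\phi(\tau)\le L_1\norm{\nabla f(x+(\tau+h)s)-\nabla f(x+\tau s)}\le L_1 h\norm{s}\,\max_{r\in[\tau,\tau+h]}\phi(r).
\]
By continuity of $\phi$, the upper right Dini derivative therefore satisfies $D^+\phi(\tau)\le L_1\norm{s}\,\phi(\tau)$.

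Next we integrate this differential inequality. Let $\psi(\tau):=e^{-L_1\norm{s}\tau}\phi(\tau)$, which is continuous. Its upper right Dini derivative is
\[
D^+\psi(\tau)=e^{-L_1\norm{s}\tau}\bigl(D^+\phi(\tau)-L_1\norm{s}\phi(\tau)\bigr)\le 0.
\]
A continuous function with nonpositive upper right Dini derivative everywhere is nonincreasing. This is the standard monotonicity lemma; if one prefers to avoid it, apply the argument to $\psi_\varepsilon(\tau)=\psi(\tau)-\varepsilon\tau$ and let $\varepsilon\to0$. Hence $\psi(1)\le\psi(0)$, i.e. $\phi(1)\le e^{L_1\norm{y-x}}\phi(0)$, which is the claim.

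An alternative, more discrete route avoids Dini derivatives. Split the segment into $N$ equal pieces $u_j=x+\tfrac{j}{N}s$. Continuity gives a uniform bound $\max_{r\in[u_j,u_{j+1}]}\phi\le(1+o(1))\phi(u_j)$ as $N\to\infty$, so
\[
\phi(u_{j+1})\le\Bigl(1+\tfrac{L_1\norm{s}}{N}(1+o(1))\Bigr)\phi(u_j).
\]
Iterating and letting $N\to\infty$ gives the same exponential bound.

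The main obstacle is the self-referential supremum in \cref{asm: smoothness}: the increment bound involves the maximum of $\phi$ over the subinterval, not its value at the left endpoint. This is handled exactly by the continuity-plus-vanishing-interval step, and it is where the continuity of $\nabla f$ is needed.
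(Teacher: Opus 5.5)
Your proof is correct, but it runs in continuous time, while the paper's proof is discrete. You derive the differential inequality $D^+\phi\le L_1\norm{s}\phi$ and integrate it through $e^{-L_1\norm{s}\tau}\phi(\tau)$ and the Dini monotonicity lemma. The self-referential supremum is removed by shrinking the interval and using continuity of $\phi$.

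The paper instead partitions $[x,y]$ into $n>L_1\norm{y-x}$ pieces, and lets $M_i$ be the supremum of $\norm{\nabla f}$ on the $i$-th piece. Applying the assumption between $x_i$ and each point of that piece gives $L_0+L_1M_i\le L_0+L_1\norm{\nabla f(x_i)}+\frac{L_1\norm{y-x}}{n}(L_0+L_1M_i)$. Since the coefficient is below one, it absorbs the $M_i$ term, obtains the exact per-step factor $(1-L_1\norm{y-x}/n)^{-1}$, and concludes with $(1-c/n)^{-n}\to e^{c}$.

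This absorption step is what the paper's route buys: it needs no Dini derivatives, no monotonicity lemma, and no $(1+o(1))$ continuity estimate, only finiteness of $M_i$. Your discrete alternative is close to the paper's, but it replaces absorption by a uniform multiplicative bound $\max\phi\le(1+o(1))\phi(u_j)$. That bound tacitly requires $\inf\phi>0$, i.e.\ $L_0>0$. This holds in all of the paper's applications, but the absorption step does not need it.

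Your Grönwall version is the more conceptual one: it exhibits $e^{L_1\norm{y-x}}$ as the solution of the comparison equation $\phi'=L_1\norm{s}\phi$.

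One minor point: you obtain continuity of $\nabla f$ from convexity, which is not a hypothesis of the lemma as stated. This is harmless here, since convexity is a standing assumption wherever the lemma is used, and the paper's proof tacitly needs the analogous fact $M_i<\infty$.
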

\begin{proof}[Proof of \cref{lem:gradient-growth}]
Let $n\in\mathbb N$ satisfy $n>L_1\norm{y-x}$ and define
$x_i=x+\frac{i}{n}(y-x)$ for $i=0,\ldots,n$, and
$M_i=\sup_{u\in[x_i,x_{i+1}]}\norm{\nabla f(u)}$.
For every $u\in[x_i,x_{i+1}]$,
$$
\norm{\nabla f(u)} \le \norm{\nabla f(x_i)} +\frac{\norm{y-x}}{n}(L_0+L_1M_i),
$$
hence
$$
L_0+L_1M_i \le L_0+L_1\norm{\nabla f(x_i)} +\frac{L_1\norm{y-x}}{n}(L_0+L_1M_i).
$$
Therefore,
$$
L_0+L_1\norm{\nabla f(x_{i+1})}
\le
\left(1-\frac{L_1\norm{y-x}}{n}\right)^{-1}
\bigl(L_0+L_1\norm{\nabla f(x_i)}\bigr).
$$
Iterating for $i=0,\ldots,n-1$ gives
$$
L_0+L_1\norm{\nabla f(y)} \le \left(1-\frac{L_1\norm{y-x}}{n}\right)^{-n}
\bigl(L_0+L_1\norm{\nabla f(x)}\bigr).
$$
Letting $n\to\infty$ and using
$\left(1-\frac{L_1\norm{y-x}}{n}\right)^{-n} \to e^{L_1\norm{y-x}}$
yields the result.
\end{proof}

\begin{proposition}[{\cite[Corollary 1]{kadi2026highprobabilityconvergenceclippedsgd}}]
\label{cor:freedman-hp}
Let $(X_t)_{t\ge 1}$ be a martingale difference sequence with respect to a
filtration $(\mathcal G_t)_{t\ge0}$ such that
$$
    |X_t| \le c
    \quad\text{almost surely for all } t\ge1.
$$
Define
$V_T :=\sum_{t=1}^T \E\!\left[X_t^2\mid\mathcal G_{t-1}\right].$
Assume that, for some deterministic $F>0$,
$V_T \le F \quad \text{almost surely}.$
Then, for every $\delta\in(0,1)$, with probability at least $1-\delta$,
$$
\left|\sum_{t=1}^T X_t \right| \le \sqrt{ 2F\log\frac{2}{\delta}} + \frac{2c}{3}\log\frac{2}{\delta}.
$$
\end{proposition}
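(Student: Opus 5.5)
We plan to derive the bound from the classical Freedman (Bernstein-type) inequality for martingales, applied once to $(X_t)$ and once to $(-X_t)$, followed by a union bound. Write $S_T:=\sum_{t=1}^T X_t$ and $L:=\log\frac{2}{\delta}$. The core one-sided claim is that, for every $b>0$,
$$
\mathbb P\left(S_T\ge b\right)\le\exp\left(-\frac{b^2}{2\left(F+cb/3\right)}\right).
$$
Since $(-X_t)$ satisfies the same hypotheses, the same tail bound holds for $-S_T$.

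To keep the argument self-contained, we prove this claim via an exponential supermartingale. Let $\psi(x):=e^x-1-x$. We use the elementary inequality $\psi(x)\le \frac{x^2}{2(1-x/3)}$ for $x<3$, which can be checked by comparing power series, since $k!\ge 2\cdot 3^{k-2}$. Fix $\lambda\in(0,3/c)$. The bounds $|X_t|\le c$ and $\mathbb E[X_t\mid\mathcal G_{t-1}]=0$ then give
$$
\mathbb E\left[e^{\lambda X_t}\mid\mathcal G_{t-1}\right]\le 1+g(\lambda)\,\mathbb E\left[X_t^2\mid\mathcal G_{t-1}\right]\le \exp\left(g(\lambda)\,\mathbb E\left[X_t^2\mid\mathcal G_{t-1}\right]\right),\qquad g(\lambda):=\frac{\lambda^2}{2(1-\lambda c/3)}.
$$
Hence $W_t:=\exp\left(\lambda S_t-g(\lambda)V_t\right)$ is a nonnegative supermartingale with $\mathbb E W_T\le1$. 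Because $V_T\le F$ almost surely, Markov's inequality yields $\mathbb P(S_T\ge b)\le \exp\left(-\lambda b+g(\lambda)F\right)$. The choice $\lambda=b/(F+cb/3)$ lies in $(0,3/c)$, and substituting it gives the displayed Freedman bound. This step is the only one with real content. The main care point is that the almost-sure bound $V_T\le F$ lets us drop the usual intersection with the event $\{V_T\le F\}$; otherwise one would need the stopping-time version of Freedman's inequality.

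It remains to calibrate $b$. Setting the right-hand side equal to $\delta/2$ means solving $b^2-\frac{2cL}{3}b-2FL=0$. The positive root is
$$
b^\ast=\frac{cL}{3}+\sqrt{\frac{c^2L^2}{9}+2FL}\le \frac{2cL}{3}+\sqrt{2FL},
$$
where the last step uses $\sqrt{u+v}\le\sqrt u+\sqrt v$. The map $b\mapsto b^2/(2(F+cb/3))$ is increasing, so $\mathbb P\left(S_T\ge \sqrt{2FL}+\frac{2c}{3}L\right)\le\delta/2$, and the same bound holds for $-S_T$. A union bound over the two tails gives, with probability at least $1-\delta$,
$$
|S_T|\le\sqrt{2F\log\frac{2}{\delta}}+\frac{2c}{3}\log\frac{2}{\delta}.
$$
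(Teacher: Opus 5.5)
Your proof is correct. The paper gives no proof of this statement; it imports it from the cited work as a corollary of Freedman's inequality. Your argument is the standard derivation behind such a corollary: a one-sided Freedman tail, inversion of the quadratic in $b$, and a union bound over $\pm S_T$. You make it self-contained by proving the Freedman step through the supermartingale $\exp(\lambda S_t-g(\lambda)V_t)$, which works here precisely because $V_T\le F$ holds almost surely with deterministic $F$.

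One small repair is needed. The term-by-term power-series comparison justifies $\psi(x)\le x^2/(2(1-x/3))$ only for $x\in[0,3)$, because for $x<0$ the series alternate in sign. You apply it at $x=\lambda X_t$, which may be negative. For $x\le 0$, use instead $\psi(x)\le x^2/2\le x^2/(2(1-\lambda c/3))$. This gives the same bound $\psi(\lambda X_t)\le g(\lambda)X_t^2$ that your supermartingale step needs.
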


\section{High-Probability Analysis for the Two-Phase Result}
\label{sec:phase-two}

 Let $\mathcal F_0$ denote the Phase-I history and set
$$
\mathcal F_i:=\sigma(\mathcal F_0,\xi_1,\ldots,\xi_i), \qquad i=1,\ldots,K.
$$
Then $x_i$ is $\mathcal F_{i-1}$-measurable. Recall that
$$
G_i:=\nabla f(x_i,\xi_i), \qquad g_i:=\clip(G_i,\lambda_i), \qquad \theta_i:=g_i-\nabla f(x_i).
$$
Since $\xi_i$ is independent of $\mathcal F_{i-1}$,
the conditions in \cref{asm:oracle} hold conditionally
on $\mathcal F_{i-1}$ at $x_i$.

For every $u\in\mathbb R^d$, let
$$
\mathcal R_t(u):= \sum_{i=1}^t\alpha_i\ip{\theta_i}{u-z_{i-1}}+
\sum_{i=1}^t\alpha_i^2\norm{\theta_i}^2.
$$
The following comparator inequality is the deterministic backbone of the analysis.

\begin{lemma}
\label{lem:comparator}
Suppose that \cref{asm: convexity} holds, where $L_0>0$.
Assume that $A_0=0$ and $y_0=z_0=\bar x$. Let $K\ge1$ and suppose that
$$
A_i\ge 16L_0\alpha_i^2, \qquad i=1,\ldots,K.
$$
Let $t\in\{1,\ldots,K\}$ and assume that, for every $i=1,\ldots,t$,
$$
f(y_i)\le f(x_i)+\ip{\nabla f(x_i)}{y_i-x_i}
+4L_0\norm{y_i-x_i}^2.
$$
Then, for every $u\in\mathbb R^d$,
$$
A_t(f(y_t)-f(u))+\frac12\norm{z_t-u}^2
\le
\frac12\norm{\bar x-u}^2+\mathcal R_t(u).
$$
\end{lemma}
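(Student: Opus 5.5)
We would prove \cref{lem:comparator} by the standard one-step potential argument for similar-triangles methods, telescoped over $i=1,\ldots,t$. Fix $u\in\mathbb R^d$ and an index $i\in\{1,\ldots,t\}$ (writing $k=i-1$). The aim is the one-step inequality
$$
A_{k+1}\bigl(f(y_{k+1})-f(u)\bigr)+\tfrac12\norm{z_{k+1}-u}^2
\le A_k\bigl(f(y_k)-f(u)\bigr)+\tfrac12\norm{z_k-u}^2+\alpha_{k+1}\ip{\theta_{k+1}}{u-z_k}+\alpha_{k+1}^2\norm{\theta_{k+1}}^2 .
$$
Summing this from $k=0$ to $t-1$ and using $A_0=0$ and $z_0=\bar x$ gives the claim directly.

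The first step is to start from the assumed local descent inequality at $(x_{k+1},y_{k+1})$. We substitute the output displacement identity $y_{k+1}-x_{k+1}=\frac{\alpha_{k+1}}{A_{k+1}}(z_{k+1}-z_k)$ from \cref{tab:sstm-identities}. After multiplying by $A_{k+1}$, this gives
$$
A_{k+1}f(y_{k+1})\le A_{k+1}f(x_{k+1})+\alpha_{k+1}\ip{\nabla f(x_{k+1})}{z_{k+1}-z_k}+\frac{4L_0\alpha_{k+1}^2}{A_{k+1}}\norm{z_{k+1}-z_k}^2 .
$$
The weight condition $A_{k+1}\ge16L_0\alpha_{k+1}^2$ bounds the last coefficient by $\frac14$.

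The second step uses convexity (\cref{asm: convexity}) twice: at $y_k$ with weight $A_k$, and at $u$ with weight $\alpha_{k+1}$, both linearized at $x_{k+1}$. Combined through the comparator identity $A_ky_k+\alpha_{k+1}u-A_{k+1}x_{k+1}=\alpha_{k+1}(u-z_k)$, this yields
$$
A_{k+1}f(x_{k+1})\le A_kf(y_k)+\alpha_{k+1}f(u)-\alpha_{k+1}\ip{\nabla f(x_{k+1})}{u-z_k}.
$$
Adding the two bounds leaves
$$
A_{k+1}f(y_{k+1})\le A_kf(y_k)+\alpha_{k+1}f(u)+\alpha_{k+1}\ip{\nabla f(x_{k+1})}{z_{k+1}-u}+\tfrac14\norm{z_{k+1}-z_k}^2 .
$$

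The third step, which is the only delicate one, converts the linear term into a distance decrease while producing exactly $\alpha^2\norm{\theta}^2$ and no $\nabla f$-dependent remainder. We write $\nabla f(x_{k+1})=g_{k+1}-\theta_{k+1}$. For the $g$ part we use the three-point identity with $z_{k+1}-z_k=-\alpha_{k+1}g_{k+1}$:
$$
\alpha_{k+1}\ip{g_{k+1}}{z_{k+1}-u}=\tfrac12\norm{z_k-u}^2-\tfrac12\norm{z_{k+1}-u}^2-\tfrac12\norm{z_{k+1}-z_k}^2 .
$$
This leaves $-\frac14\norm{z_{k+1}-z_k}^2$ available. For the $\theta$ part we split
$$
-\alpha_{k+1}\ip{\theta_{k+1}}{z_{k+1}-u}=\alpha_{k+1}\ip{\theta_{k+1}}{u-z_k}-\alpha_{k+1}\ip{\theta_{k+1}}{z_{k+1}-z_k},
$$
and apply Young's inequality to the cross term:
$$
\alpha_{k+1}\norm{\theta_{k+1}}\norm{z_{k+1}-z_k}\le\alpha_{k+1}^2\norm{\theta_{k+1}}^2+\tfrac14\norm{z_{k+1}-z_k}^2 .
$$
This absorbs the leftover quarter and produces the residual increment with coefficient exactly $1$.

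Subtracting $A_{k+1}f(u)=A_kf(u)+\alpha_{k+1}f(u)$ then gives the one-step inequality, and telescoping finishes the proof. The main thing to verify is the constant bookkeeping in the third step: the $\frac14$ from smoothness plus the $\frac14$ from Young's inequality must exactly match the $\frac12$ from the three-point identity.
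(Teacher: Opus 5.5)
Your proof is correct and follows essentially the same route as the paper. It uses the same potential, the same two convexity inequalities combined through $A_{k+1}x_{k+1}=A_ky_k+\alpha_{k+1}z_k$, and the same weight condition to bound the descent coefficient by $\frac14$. Your Young's-inequality step is exactly the paper's completion of the square $\langle\theta_i,g_i\rangle-\frac14\norm{g_i}^2=\norm{\theta_i}^2-\frac14\norm{g_i-2\theta_i}^2\le\norm{\theta_i}^2$. The only difference is that you write it in terms of $z_{k+1}-z_k=-\alpha_{k+1}g_{k+1}$ and anchor the three-point identity at $z_{k+1}$ rather than at $z_k$.
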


\begin{lemma}
\label{lem:grad_bound_smoothness}
Suppose that Assumptions~\ref{asm: convexity} and~\ref{asm: smoothness}
hold with $L_0,L_1>0$. Assume that $f(\bar x)-f^\star\le\frac{cL_0}{L_1^2}$
for some $0<c\le\frac1{32}$. Assume moreover that $A_0=0$ and $y_0=z_0=\bar x$.
Let $t\in\{0,\ldots,K-1\}$ and suppose that for all $i \in \{1,\ldots,t+1\}$, we have
$A_i\ge 16L_0\alpha_i^2.$ Suppose that, for every $i=1,\ldots,t$,
$$
\begin{cases}
\displaystyle f(y_i)\le f(x_i)+\ip{\nabla f(x_i)}{y_i-x_i} +4L_0\norm{y_i-x_i}^2,\\[2mm]
\displaystyle \mathcal R_i(\bar x)\le\frac{cA_iL_0}{L_1^2},
\end{cases}
$$
and that
$$
\lambda_{t+1} \le \frac{A_{t+1}}{L_1\alpha_{t+1}^2}.
$$
Then
$$
\begin{cases}
\displaystyle \norm{\nabla f(x_{t+1})}\le\frac{L_0}{L_1},\\[2mm]
\displaystyle L_1\norm{y_{t+1}-x_{t+1}}\le1.
\end{cases}
$$
Consequently,
$$
f(y_{t+1}) \le f(x_{t+1}) +\ip{\nabla f(x_{t+1})}{y_{t+1}-x_{t+1}} +4L_0\norm{y_{t+1}-x_{t+1}}^2.
$$
\end{lemma}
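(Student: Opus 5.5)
Since the statement to be proved is Lemma~\ref{lem:grad_bound_smoothness}, the plan is to apply Lemma~\ref{lem:comparator} with $u=\bar x$ up to time $t$, which is allowed because the descent inequality is assumed for $i\le t$. If $t=0$, then $x_1=y_0=z_0=\bar x$ and the bounds below are immediate. For $t\ge1$, using $z_0=\bar x$ and the residual hypothesis, we get for each $i\le t$
\[
A_i\bigl(f(y_i)-f(\bar x)\bigr)+\tfrac12\norm{z_i-\bar x}^2\le \mathcal R_i(\bar x)\le \frac{cA_iL_0}{L_1^2}.
\]
This has two consequences. First, $f(y_i)-f^\star\le 2cL_0/L_1^2$, obtained by adding $f(\bar x)-f^\star\le cL_0/L_1^2$. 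Second, since $f(y_i)\ge f^\star$, we have $\tfrac12\norm{z_i-\bar x}^2\le cA_iL_0/L_1^2 + A_i(f(\bar x)-f^\star)\le 2cA_iL_0/L_1^2$. Hence $\norm{z_i-\bar x}\le 2\sqrt{cA_iL_0}/L_1$.

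Next we control the query point $x_{t+1}$. With $A_0=0$, the averaging identity gives $y_t=A_t^{-1}\sum_{i\le t}\alpha_i z_i$. Since $A_i\le A_t$, it follows that $\norm{y_t-\bar x}\le 2\sqrt{cA_tL_0}/L_1$ and $\norm{z_t-y_t}\le 4\sqrt{cA_tL_0}/L_1$. The query displacement identity and $A_{t+1}\ge16L_0\alpha_{t+1}^2$ then give
\[
\norm{x_{t+1}-y_t}\le\frac{\alpha_{t+1}}{A_{t+1}}\cdot\frac{4\sqrt{cA_{t+1}L_0}}{L_1}\le \frac{4\sqrt{cL_0}}{L_1}\cdot\frac{1}{4\sqrt{L_0}}=\frac{\sqrt c}{L_1}.
\]
Lemma~\ref{lem:self-bounding-norm-form} with $\Delta(y_t)\le 2cL_0/L_1^2$ gives $\norm{\nabla f(y_t)}\le \frac{L_0}{L_1\nu}\bigl(2c+\sqrt{4c^2+4\nu c}\bigr)$. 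With $c\le1/32$ and $\nu\approx0.567$, this is roughly $0.39\,L_0/L_1$, so $L_0+L_1\norm{\nabla f(y_t)}\le 1.4L_0$. Lemma~\ref{lem:gradient-growth} over a distance at most $\sqrt c/L_1\le 0.18/L_1$ multiplies this by $e^{0.18}\le1.2$. This yields $L_0+L_1\norm{\nabla f(x_{t+1})}\le 1.7L_0<2L_0$, i.e.\ $\norm{\nabla f(x_{t+1})}\le L_0/L_1$. This numerical check is the step I expect to be the main obstacle: the constants must close with $c\le1/32$. If they are too tight, I will instead use the sharper bound $\norm{z_i-\bar x}$ via $f(y_i)\ge f^\star$ more carefully, or bound the distance from $y_t$ using the convexity of the sublevel gap.

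For the second claim, the output displacement identity gives $\norm{y_{t+1}-x_{t+1}}=\frac{\alpha_{t+1}^2}{A_{t+1}}\norm{g_{t+1}}\le \frac{\alpha_{t+1}^2}{A_{t+1}}\lambda_{t+1}\le 1/L_1$ by the assumed bound on $\lambda_{t+1}$. Finally, Lemma~\ref{lem:local-smoothness-short-segment} applies because $L_1\norm{y_{t+1}-x_{t+1}}\le1$. Combined with $L_0+L_1\norm{\nabla f(x_{t+1})}\le 2L_0$, it gives the coefficient $2\cdot2L_0=4L_0$, which is the claimed descent inequality.
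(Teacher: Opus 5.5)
Your proposal is correct and follows the paper's proof essentially step for step. The steps are: the comparator inequality with $u=\bar x$; the averaging bound $\norm{z_t-y_t}\le 4\sqrt{cA_tL_0}/L_1$ and the displacement bound $L_1\norm{x_{t+1}-y_t}\le\sqrt c$; \cref{lem:self-bounding-norm-form} at $y_t$ followed by \cref{lem:gradient-growth}; and finally the clipping condition combined with \cref{lem:local-smoothness-short-segment}.

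The only slip is numerical. With $\Delta(y_t)\le 2cL_0/L_1^2$ and $c=1/32$, the constant $(2c+\sqrt{4c^2+4\nu c})/\nu$ is about $0.59$, not $0.39$. The value $0.39$ is what one gets for gap $cL_0/L_1^2$, i.e.\ your $t=0$ case. The bound still closes, since $(1+0.593)\,e^{\sqrt c}\approx1.90<2$. The paper instead uses $\nu\ge1/2$ to get the constant $(1+\sqrt{17})/8$, and then $e^{1/(4\sqrt2)}(9+\sqrt{17})/8\le2$.
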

\begin{lemma}
\label{lem:gradient-below-clipping}
Suppose that \cref{asm: convexity,asm: smoothness} hold with $L_0,L_1>0$, and assume that
$
\norm{\bar x-x^\star}\le R$ and $f(\bar x)-f^\star\le\frac{cL_0}{L_1^2}$, where $R>0$ and $0<c\le1/32$. Let $\ell\ge 4$ and assume that $A_0=0$ and $y_0=z_0=\bar x$. Suppose that
$$
\begin{cases}
\displaystyle A_i\ge 96\ell^2L_0\alpha_i^2,
& i=1,\ldots,K,\\[2mm]
\displaystyle A_{i+1}\le\frac52A_i,
& i=1,\ldots,K-1.
\end{cases}
$$
Define
$$
\lambda_i:=\frac1{\alpha_i\ell} \min\left\{\frac{\sqrt{cA_iL_0}}{L_1},R\right\},
\qquad i=1,\ldots,K.
$$
Let $t\in\{0,\ldots,K-1\}$ and suppose that
$$
\begin{cases}
\displaystyle f(y_i)\le f(x_i)+\ip{\nabla f(x_i)}{y_i-x_i}+4L_0\norm{y_i-x_i}^2,\\[2mm]
\displaystyle \mathcal R_i(\bar x)\le\frac{cA_iL_0}{L_1^2},\\[2mm]
\displaystyle \mathcal R_i(x^\star)\le\frac{R^2}{2},
\end{cases}
\qquad i=1,\ldots,t.
$$
Then
$$
\norm{\nabla f(x_{t+1})}\le\frac{\lambda_{t+1}}2.
$$
\end{lemma}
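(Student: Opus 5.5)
We need two bounds on $\norm{\nabla f(x_{t+1})}$, one at scale $\sqrt{cA_{t+1}L_0}/L_1$ and one at scale $R$, each divided by $2\alpha_{t+1}\ell$. Since $\lambda_{t+1}/2$ is half the minimum of the two scales, proving both bounds gives the claim.

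\textbf{Step 1: potential bounds.} For $t\ge1$ we apply \cref{lem:comparator} with $u=\bar x$ and with $u=x^\star$. Its weight condition follows from $A_i\ge96\ell^2L_0\alpha_i^2\ge16L_0\alpha_i^2$, and the descent inequalities are hypotheses. For every $i\le t$ this gives
$$f(y_i)-f^\star\le\min\Bigl\{\tfrac{2cL_0}{L_1^2},\tfrac{R^2}{A_i}\Bigr\},\qquad \norm{z_i-\bar x}\le\tfrac{\sqrt{2cA_iL_0}}{L_1},\qquad \norm{z_i-x^\star}\le\sqrt2R.$$
Here we use $f(y_i)\ge f^\star$ and $\tfrac12\norm{\bar x-x^\star}^2\le R^2/2$. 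By the averaging identity with $A_0=0$, $y_t$ is a convex combination of $z_1,\dots,z_t$, and $A_i\le A_t$. Hence $\norm{z_t-y_t}\le 2\sqrt{2cA_tL_0}/L_1$ and $\norm{z_t-y_t}\le2\sqrt2R$.

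The query displacement identity then gives $\norm{x_{t+1}-y_t}\le(\alpha_{t+1}/A_{t+1})\,\norm{z_t-y_t}$. For $t=0$ we have $x_1=y_0=\bar x$, and the needed bounds come directly from the hypotheses on $\bar x$.

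\textbf{Step 2: gradient at $y_t$.} We apply \cref{lem:self-bounding-norm-form} with $\Delta\le2cL_0/L_1^2$ and $c\le1/32$. This gives $\norm{\nabla f(y_t)}\lesssim\sqrt{L_0\Delta}$, with the $L_1\Delta$ term absorbed because $L_1\Delta\le\sqrt{2cL_0\Delta}$. Using the two gap bounds separately:
\begin{itemize}
\item the bound $\Delta\le2cL_0/L_1^2$ gives $\norm{\nabla f(y_t)}\lesssim\sqrt c\,L_0/L_1$, which also shows $\norm{\nabla f(y_t)}\le L_0/L_1$ with small constants;
\item the bound $\Delta\le R^2/A_t$ gives $\norm{\nabla f(y_t)}\lesssim R\sqrt{L_0/A_t}$.
\end{itemize}

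\textbf{Step 3: transfer to $x_{t+1}$.} We first check that $L_1\norm{x_{t+1}-y_t}\le1$. Write $\alpha_{t+1}/A_{t+1}\le(96\ell^2L_0A_{t+1})^{-1/2}$. Combined with $A_t\le A_{t+1}$ this gives
$$L_1\norm{x_{t+1}-y_t}\le 2\sqrt{2c}/(\sqrt{96}\,\ell)\ll1.$$
By $(L_0,L_1)$-smoothness, or \cref{lem:gradient-growth} with $e^{\le1}$ factors,
$$\norm{\nabla f(x_{t+1})}\le\norm{\nabla f(y_t)}+e\,(L_0+L_1\norm{\nabla f(y_t)})\norm{x_{t+1}-y_t}\lesssim\norm{\nabla f(y_t)}+L_0\norm{x_{t+1}-y_t}.$$
The last step uses $L_1\norm{\nabla f(y_t)}\le L_0$.

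We now plug in the two displacement bounds. For the first scale, $L_0\cdot\frac{\alpha_{t+1}}{A_{t+1}}\cdot\frac{\sqrt{cA_tL_0}}{L_1}$ and the Step 2 term $\sqrt c\,L_0/L_1$ are both to be compared with $\frac{\sqrt{cA_{t+1}L_0}}{2\alpha_{t+1}\ell L_1}$. Here we use $L_0\alpha_{t+1}^2\le A_{t+1}/(96\ell^2)$, which gives $\sqrt{L_0}\le\sqrt{A_{t+1}}/(\sqrt{96}\,\ell\alpha_{t+1})$. The second scale is handled the same way, with $R\sqrt{L_0/A_t}$ compared against $R/(2\alpha_{t+1}\ell)$.

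\textbf{Main obstacle: the index mismatch $A_t$ versus $A_{t+1}$.} Step 2 yields $\sqrt{L_0/A_t}$, while the target uses $A_{t+1}$. This is where the hypothesis $A_{t+1}\le\frac52A_i$ enters: it gives $1/\sqrt{A_t}\le\sqrt{5/2}/\sqrt{A_{t+1}}$ for $t\ge1$. The delicate part is tracking the numerical constants, namely $\nu$, the factor $e$, $\sqrt{5/2}$, $\sqrt{96}$ and $\ell\ge4$, so that each contribution is at most $\lambda_{t+1}/4$. I expect the $\sqrt{96}$ factor in the weight condition is exactly what makes this close.

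The case $t=0$ needs separate treatment, since $A_0=0$. There we use $x_1=\bar x$ and $\nabla f(\bar x)$ bounded through both $\Delta(\bar x)\le cL_0/L_1^2$ and, by the generalized-smoothness growth along $[x^\star,\bar x]$, $\norm{\nabla f(\bar x)}\lesssim L_0R$. The latter is compared with $R/(2\alpha_1\ell)$ using $L_0\alpha_1\le A_1/(96\ell^2\alpha_1)=1/(96\ell^2)$, since $A_1=\alpha_1$.
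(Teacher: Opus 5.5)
Your architecture is the paper's: the comparator with $u=\bar x$ and $u=x^\star$, the convex-combination bound on $\norm{z_t-y_t}$, \cref{lem:self-bounding-norm-form} at $y_t$, a short transfer to $x_{t+1}$, $A_{t+1}\le\frac52A_t$ for the index shift, and a separate case $t=0$. The gap is that this lemma is a statement about the constant $\frac12$ under the specific hypotheses, and your proposal stops exactly where that must be checked. The allocation you announce also cannot work.

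Multiply through by $\alpha_{t+1}\ell$ and use the constant $3$ that $\nu\ge\frac12$ gives in \cref{lem:self-bounding-norm-form}. The $y_t$-term alone then contributes $\frac{\sqrt3}{4}\approx0.433$ of the scale $\sqrt{cA_{t+1}L_0}/L_1$, and $3\sqrt{5/2}/\sqrt{96}=\frac{\sqrt{15}}{8}\approx0.484$ of the scale $R$. Even with the exact $\nu$ (constant about $2.37$) it exceeds $\frac14$ in both branches, so ``each contribution at most $\lambda_{t+1}/4$'' is impossible.

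What closes the argument is that the transfer term is $O(1/\ell)$ with a small constant. Check $L_1\norm{x_{t+1}-y_t}\le\frac18$. Then $L_0+L_1\norm{\nabla f(v)}\le\frac74e^{1/8}L_0\le2L_0$ on $[y_t,x_{t+1}]$, giving totals $\frac{\sqrt3}{4}+\frac1{12\ell}$ and $\frac{\sqrt{15}}8+\frac{\sqrt2}{24\ell}$, both $<\frac12$ for $\ell\ge4$. The $R$-branch is nearly tight ($\approx0.499$ at $\ell=4$), so the constants genuinely matter. With the factor $e$ you wrote in Step~3 instead of $e^{1/8}$, and the constant $3$, the transfer term is $\frac{7e}{4}\cdot\frac{2\sqrt2}{96\ell}R\approx0.035R$ at $\ell=4$, and the sum exceeds $R/2$.

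A minor slip: $\norm{z_i-\bar x}\le2\sqrt{cA_iL_0}/L_1$, not $\sqrt{2cA_iL_0}/L_1$. The term $A_i(f(\bar x)-f(y_i))$ can be as large as $cA_iL_0/L_1^2$. The $\sqrt c$-branch has enough slack to absorb this.

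For $t=0$, the bound $\norm{\nabla f(\bar x)}\lesssim L_0R$ does not follow from ``generalized-smoothness growth along $[x^\star,\bar x]$''. Over a segment of length up to $R$, \cref{lem:gradient-growth} only gives $\norm{\nabla f(\bar x)}\le L_0(e^{L_1R}-1)/L_1$, which is exactly the exponential dependence the paper avoids. There are two ways to fix it:
\begin{itemize}
\item The paper combines convexity, $f(\bar x)-f^\star\le R\norm{\nabla f(\bar x)}$, with $\norm{\nabla f(\bar x)}\le3\sqrt{L_0(f(\bar x)-f^\star)}$ to get $\norm{\nabla f(\bar x)}\le9L_0R$.
\item Alternatively, by convexity every $u\in[x^\star,\bar x]$ satisfies $f(u)-f^\star\le cL_0/L_1^2$. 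Hence $L_1\norm{\nabla f(u)}\le L_0$ on the segment, and \cref{asm: smoothness} gives $\norm{\nabla f(\bar x)}\le2L_0R$.
\end{itemize}
Either fix completes the case $t=0$ once combined with $\alpha_1\ell L_0\le\frac1{96\ell}$ and $\alpha_1\ell\sqrt{L_0/A_1}\le\frac1{\sqrt{96}}$, both of which follow from $A_1=\alpha_1$.
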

In the following, suppose that
\cref{asm: convexity,asm: smoothness,asm:oracle} hold
and that the initialization and parameter conditions
of \cref{lem:gradient-below-clipping} are satisfied.

For $t=0,\ldots,K$, let $\mathcal E_t$ be the event
that, for every $i=1,\ldots,t$,
$$
\begin{cases}
f(y_i)\le f(x_i)
+\ip{\nabla f(x_i)}{y_i-x_i}
+4L_0\norm{y_i-x_i}^2,\\[2mm]
\displaystyle
\mathcal R_i(\bar x)\le\frac{cA_iL_0}{L_1^2},\\[2mm]
\displaystyle
\mathcal R_i(x^\star)\le\frac{R^2}{2}.
\end{cases}
$$
For $t=0$, there are no conditions, so
$\mathbf 1_{\mathcal E_0}=1$.

For $u\in\{\bar x,x^\star\}$, define
$$
\widehat{\mathcal R}_t(u) :=\sum_{i=1}^t\mathbf 1_{\mathcal E_{i-1}}
\left(\alpha_i\ip{\theta_i}{u-z_{i-1}}+\alpha_i^2\norm{\theta_i}^2\right).
$$
Since the events $\mathcal E_t$ are nested, on $\mathcal E_{t-1}$ we have
$$
\widehat{\mathcal R}_t(u)=\mathcal R_t(u).
$$

On $\mathcal E_{i-1}$,
\cref{lem:gradient-below-clipping} gives
$\norm{\nabla f(x_i)}\le\lambda_i/2$.
Since $\mathcal E_{i-1}\in\mathcal F_{i-1}$, applying
\citet[Lemma~5.1]{sadiev2023high} conditionally on
$\mathcal F_{i-1}$ yields, on $\mathcal E_{i-1}$,
$$
\norm{\mathbb E[\theta_i\mid\mathcal F_{i-1}]} \le\frac{2^\alpha\sigma^\alpha}{\lambda_i^{\alpha-1}}.
$$
Moreover, nonexpansiveness of clipping and
$\norm{g_i}\le\lambda_i$ imply, on $\mathcal E_{i-1}$,
$\norm{\theta_i} \le\min\left\{\norm{G_i-\nabla f(x_i)},\frac32\lambda_i
\right\}.$ Therefore, on $\mathcal E_{i-1}$,
$$
\mathbb E[\norm{\theta_i}^2\mid\mathcal F_{i-1}] \le \left(\frac32\lambda_i\right)^{2-\alpha} \mathbb E[\norm{G_i-\nabla f(x_i)}^\alpha \mid\mathcal F_{i-1}]\\
\le 2\sigma^\alpha\lambda_i^{2-\alpha}.
$$

To control $\widehat{\mathcal R}_t(u)$, we subtract the conditional mean from each summand. This separates the random fluctuations from the clipping bias and the conditional second-moment terms. For $u\in\{\bar x,x^\star\}$, we write
$$
\widehat{\mathcal R}_t(u)=M_t(u)+B_t(u),
$$
where 
$$
\begin{cases}
\displaystyle M_t(u):=\sum_{i=1}^t\mathbf 1_{\mathcal E_{i-1}}\Bigl(\alpha_i\ip{
\theta_i-\mathbb E[\theta_i\mid\mathcal F_{i-1}]}{u-z_{i-1}}+\alpha_i^2\bigl(\norm{\theta_i}^2
-\mathbb E[\norm{\theta_i}^2\mid\mathcal F_{i-1}]\bigr)\Bigr),\\[4mm]
\displaystyle B_t(u):=\sum_{i=1}^t\mathbf 1_{\mathcal E_{i-1}}\left(\alpha_i\ip{
\mathbb E[\theta_i\mid\mathcal F_{i-1}]}{u-z_{i-1}}+\alpha_i^2
\mathbb E[\norm{\theta_i}^2\mid\mathcal F_{i-1}]\right).
\end{cases}
$$
Since $\mathbf 1_{\mathcal E_{i-1}}$ and $u-z_{i-1}$ are $\mathcal F_{i-1}$-measurable, the increments of $M_t(u)$ have zero conditional mean. Thus, $(M_t(u))_{t=0}^K$ is a martingale, with $M_0(u)=0$.

The clipping bias and second-moment bounds give
$$
B_t(u)\le \sum_{i=1}^t\mathbf 1_{\mathcal E_{i-1}}\left(\frac{2^\alpha\sigma^\alpha\alpha_i}
{\lambda_i^{\alpha-1}}\norm{u-z_{i-1}}+2\sigma^\alpha\alpha_i^2\lambda_i^{2-\alpha}\right).
$$
For $i \ge 2$, on $\mathcal E_{i-1}$, applying \cref{lem:comparator}
at iteration $i-1$ with $u=\bar x$ and $u=x^\star$ gives
$$
\begin{cases}
\displaystyle \norm{\bar x-z_{i-1}} \le \frac{2\sqrt{cA_{i-1}L_0}}{L_1},\\[3mm]
\displaystyle \norm{x^\star-z_{i-1}}\le\sqrt2R.
\end{cases}
$$
For $i=1$, these bounds follow from $z_0=\bar x$,
$A_0=0$, and $\norm{\bar x-x^\star}\le R$. To control $M_t(u)$, we bound its increments and  the sum of their conditional variances, then apply  Freedman's inequality. A union bound over $t=1,\ldots,K$ and $u\in\{\bar x,x^\star\}$ gives the following result.

\begin{lemma}
\label{lem:stopped-residuals}
Suppose that Assumptions~\ref{asm: convexity},\ref{asm: smoothness} and~\ref{asm:oracle} hold with $L_0,L_1>0$. Assume that $\norm{\bar x-x^\star}\le R$ and $f(\bar x)-f^\star\le cL_0/L_1^2$, where $R>0$ and $0<c\le1/32$.
Assume moreover that $A_0=0$ and $y_0=z_0=\bar x$.
Let $K\ge1$, $\delta\in(0,1)$, and $\ell\ge 8\log(4K/\delta)$. Suppose that
$$
\begin{cases}
\displaystyle A_i\ge 96\ell^2L_0\alpha_i^2,
& i=1,\ldots,K,\\[2mm]
\displaystyle A_{i+1}\le\frac52A_i,
& i=1,\ldots,K-1.
\end{cases}
$$
For $i=1,\ldots,K$, define
$$
\begin{cases}
\displaystyle s_i:=\min\left\{ \frac{\sqrt{cA_iL_0}}{L_1},R\right\},\\[3mm]
\displaystyle \lambda_i:=\frac{s_i}{\alpha_i\ell}.
\end{cases}
$$
Suppose that $\sigma^\alpha\sum_{i=1}^K \frac{\alpha_i}{s_i\lambda_i^{\alpha-1}} \le\frac1{512}.$
Then, with probability at least $1-\delta$, simultaneously for every $t=1,\ldots,K$,
$$
\begin{cases}
\displaystyle \widehat{\mathcal R}_t(\bar x) \le\frac{cA_tL_0}{L_1^2},\\[3mm]
\displaystyle\widehat{\mathcal R}_t(x^\star) \le\frac{R^2}{2}.
\end{cases}
$$
On this event, $\mathcal E_K$ holds.
\end{lemma}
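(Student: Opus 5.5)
We bound the martingale part $M_t(u)$ with Freedman's inequality (\cref{cor:freedman-hp}) and the predictable part $B_t(u)$ directly, separately for each comparator $u\in\{\bar x,x^\star\}$ and each horizon $t$. A union bound over $2K$ events, each of failure probability $\delta/(2K)$, then gives the simultaneous statement. Throughout, the relevant scales are $D_t(\bar x):=cA_tL_0/L_1^2$ and $D_t(x^\star):=R^2/2$. The key facts are $\alpha_i\lambda_i=s_i/\ell$ and the localization bounds stated just before the lemma. These give $\norm{\bar x-z_{i-1}}\le 2\sqrt{cA_{i-1}L_0}/L_1\le 2s_i$, using $A_{i-1}\le A_i$, and $\norm{x^\star-z_{i-1}}\le\sqrt2R$. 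Using $s_i\le R$ and $s_i\le \sqrt{cA_iL_0}/L_1$, in both cases we get $\norm{u-z_{i-1}}\le 2\sqrt{2}\, r_u$ with $r_u\in\{\sqrt{cA_iL_0}/L_1,R\}$ and $s_i\le r_u$.

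\textbf{Predictable part.} On $\mathcal E_{i-1}$ the $i$-th summand of $B_t(u)$ is at most $\frac{2^\alpha\sigma^\alpha\alpha_i}{\lambda_i^{\alpha-1}}\norm{u-z_{i-1}}+2\sigma^\alpha\alpha_i^2\lambda_i^{2-\alpha}$. The second term equals $2\sigma^\alpha\frac{\alpha_i}{\lambda_i^{\alpha-1}}\alpha_i\lambda_i=2\sigma^\alpha\frac{\alpha_i}{\lambda_i^{\alpha-1}}\frac{s_i}{\ell}$. Since $\norm{u-z_{i-1}}\lesssim r_u$ and $s_i\le r_u$, each summand is $\lesssim \sigma^\alpha\frac{\alpha_i}{s_i\lambda_i^{\alpha-1}}\,s_i r_u$. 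For $u=x^\star$ we have $s_ir_u\le R^2$. For $u=\bar x$ we have $s_i r_u\le cA_iL_0/L_1^2\le cA_tL_0/L_1^2$, since $A_i$ is increasing. Summing and using the hypothesis $\sum_i\sigma^\alpha\alpha_i/(s_i\lambda_i^{\alpha-1})\le 1/512$ gives $B_t(u)\le D_t(u)/4$, provided the numerical constants ($2^\alpha\cdot 2\sqrt2+2\le 14$) are tracked.

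\textbf{Martingale part.} The increments are bounded almost surely. This uses $\norm{\theta_i}\le\frac32\lambda_i$ on $\mathcal E_{i-1}$ and $\norm{\mathbb E[\theta_i\mid\mathcal F_{i-1}]}\le\frac32\lambda_i$. It gives $|X_i|\le 3\alpha_i\lambda_i\norm{u-z_{i-1}}+\frac94\cdot 2\,\alpha_i^2\lambda_i^2\lesssim s_i r_u/\ell$. The conditional variance is $\mathbb E[X_i^2\mid\mathcal F_{i-1}]\lesssim \alpha_i^2\norm{u-z_{i-1}}^2\mathbb E\norm{\theta_i}^2+\alpha_i^4\lambda_i^2\mathbb E\norm{\theta_i}^2\lesssim \sigma^\alpha\alpha_i^2\lambda_i^{2-\alpha}r_u^2=\sigma^\alpha\frac{\alpha_i}{s_i\lambda_i^{\alpha-1}}\frac{s_i^2 r_u^2}{\ell}$.

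\textbf{Main obstacle.} The scale $D_t(\bar x)$ depends on $t$, and both the increment bound and the variance bound must be uniform and deterministic, as \cref{cor:freedman-hp} requires. For $u=\bar x$ I handle this by bounding each summand for $i\le t$ through $s_ir_u\le cA_iL_0/L_1^2\le D_t(\bar x)$. This yields the deterministic constants $c_{\rm inc}\lesssim D_t/\ell$ and $F\lesssim D_t^2/(512\ell)$. Freedman with $\log(4K/\delta)\le\ell/8$ then gives $|M_t(u)|\le\sqrt{2F\ell/8}+\frac{2c_{\rm inc}}{3}\cdot\frac{\ell}{8}\le D_t(u)/2$ after checking constants. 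This is the step where the factor $\ell$ in $\lambda_i$ and the $1/512$ in the hypothesis are consumed. Combining the two parts gives $\widehat{\mathcal R}_t(u)\le\frac34 D_t(u)$ for every $t$ and both $u$, with probability at least $1-\delta$.

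\textbf{Conclusion $\mathcal E_K$.} We argue by induction on $t$, starting from $\mathcal E_0$. Suppose $\mathcal E_{t-1}$ holds. Then $\widehat{\mathcal R}_t=\mathcal R_t$, so both residual bounds hold at time $t$. The hypotheses of \cref{lem:grad_bound_smoothness} at index $t-1$ also hold: $A_i\ge16L_0\alpha_i^2$ follows from $96\ell^2$, and $\lambda_t\le s_t/(\alpha_t\ell)\le A_t/(L_1\alpha_t^2)$ follows from $\sqrt{cA_tL_0}/(L_1\ell\alpha_t)\le A_t/(L_1\alpha_t^2)$, which is equivalent to $\alpha_t\sqrt{cL_0}\le\ell\sqrt{A_t}$ and is implied by the weight condition. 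Hence the descent inequality holds at $t$, and $\mathcal E_t$ holds.
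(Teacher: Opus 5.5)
Your architecture matches the paper's proof: the split $\widehat{\mathcal R}_t(u)=M_t(u)+B_t(u)$, the localization bounds on $\mathcal E_{i-1}$, deterministic $t$-dependent Freedman constants from $A_i\le A_t$, a union bound over $2K$ events at level $\delta/(2K)$, and the induction through \cref{lem:grad_bound_smoothness} with the check $\lambda_t\le A_t/(L_1\alpha_t^2)$. One small correction. The bound $\norm{\bar x-z_{i-1}}\le 2\sqrt{cA_iL_0}/L_1$ does not give $\norm{\bar x-z_{i-1}}\le 2s_i$ when $s_i=R<\sqrt{cA_iL_0}/L_1$. The paper gets $(1+\sqrt2)s_i$ by also using $\norm{\bar x-z_{i-1}}\le\norm{\bar x-x^\star}+\norm{x^\star-z_{i-1}}\le(1+\sqrt2)R$. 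You never actually use that claim, though. Pairing $\norm{\bar x-z_{i-1}}\le 2r_{\bar x}$ with $s_i\le r_{\bar x}$, so that $s_ir_{\bar x}\le cA_iL_0/L_1^2$, is valid and slightly cleaner.

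The gap is in the step you defer to ``checking constants''. That step is where the lemma has content, because $\ell\ge8\log(4K/\delta)$ leaves almost no room for $u=x^\star$. Its budget $R^2/2$ is only half of $\max_i s_ir_{x^\star}=R^2$, and $s_i=R$ does occur once $A_i$ is large. Let $\rho R$ be your bound on $\norm{x^\star-z_{i-1}}$. With $\norm{\theta_i-\mathbb E[\theta_i\mid\mathcal F_{i-1}]}\le3\lambda_i$ and $\alpha_i\lambda_i=s_i/\ell$, the increment bound is $c_{\rm inc}\approx3\rho R^2/\ell$. In the worst case $\log(4K/\delta)=\ell/8$, the linear Freedman term $\frac{2}{3}c_{\rm inc}\log\frac{4K}{\delta}$ is then about $\frac{\rho}{4}R^2$. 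With your unified $\rho=2\sqrt2$ this is about $0.71R^2>R^2/2$, so the argument as set up does not give $\widehat{\mathcal R}_t(x^\star)\le R^2/2$ at all. With the sharper $\rho=\sqrt2$, which you also state, it is about $0.35R^2$. That is enough for the lemma, but it already exceeds your target $|M_t(x^\star)|\le D_t(x^\star)/2=R^2/4$, so the claimed $\widehat{\mathcal R}_t(u)\le\frac34D_t(u)$ is not obtainable this way. The repair is to keep $\sqrt2R$ for this comparator and split the budget unevenly, as the paper does: $|M_t(x^\star)|<\frac{29}{60}R^2$ and $B_t(x^\star)<\frac{R^2}{64}$, which sum to $\frac{479}{960}R^2$. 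You can gain some room from $\theta_i-\mathbb E[\theta_i\mid\mathcal F_{i-1}]=g_i-\mathbb E[g_i\mid\mathcal F_{i-1}]$, whose norm is at most $2\lambda_i$. For $u=\bar x$ your constants do close to $D_t(\bar x)$, but again not with the $\frac12/\frac14$ split.
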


For the choice $\alpha_i=\frac{i+1}{aL_0},$ we have $A_i=\frac{i(i+3)}{2aL_0}
\ge\frac{aL_0}{2}\alpha_i^2.$ Hence, if $a\ge192\ell^2$, we have $A_i\ge96\ell^2L_0\alpha_i^2.$
Moreover, for all $i\ge1,$ we have $
\frac{A_{i+1}}{A_i}=\frac{(i+1)(i+4)}{i(i+3)}\le\frac52.$ Thus, the conditions on $A_i$ in \cref{lem:stopped-residuals} hold whenever
$a\ge192\ell^2$.

\subsection{Proofs}

\begin{proof}[Proof of \cref{lem:comparator}]
Fix $u\in\mathbb R^d$ and define
$$
\Phi_i(u):= A_i\left(f(y_i)-f(u)\right) +\frac12\norm{z_i-u}^2.
$$
By convexity, we have
$$
\begin{cases}
f(y_{i-1}) \ge f(x_i)+\ip{\nabla f(x_i)}{y_{i-1}-x_i},\\[0.5em]
f(u) \ge f(x_i)+\ip{\nabla f(x_i)}{u-x_i}.
\end{cases}
$$
Multiplying these inequalities by $A_{i-1}$ and
$\alpha_i$, respectively, and adding them yields
$$
A_{i-1}f(y_{i-1})+\alpha_i f(u) \ge A_i f(x_i) +\ip{\nabla f(x_i)}{A_{i-1}y_{i-1}+\alpha_i u-A_i x_i},
$$
where we used $A_i=A_{i-1}+\alpha_i$.
By the definition of $x_i$,
$A_i x_i=A_{i-1}y_{i-1}+\alpha_i z_{i-1}.$ Therefore
$$
A_i f(x_i)-A_{i-1}f(y_{i-1})-\alpha_i f(u) \le \alpha_i\ip{\nabla f(x_i)}{z_{i-1}-u}.
$$
We have $y_i-x_i=-\frac{\alpha_i^2}{A_i}g_i.$
Substituting this into the assumed descent inequality
and multiplying by $A_i$, we obtain
$$
A_i\bigl(f(y_i)-f(x_i)\bigr) \le -\alpha_i^2\langle\nabla f(x_i),g_i\rangle
+\frac{4L_0\alpha_i^4}{A_i}\|g_i\|^2.
$$
It follows that
$$A_i\bigl(f(y_i)-f(u)\bigr)-A_{i-1}\bigl(f(y_{i-1})-f(u)\bigr) \le
\alpha_i\langle\nabla f(x_i),z_{i-1}-u\rangle -\alpha_i^2\langle\nabla f(x_i),g_i\rangle
+\frac{4L_0\alpha_i^4}{A_i}\|g_i\|^2.$$
Since  $z_i=z_{i-1}-\alpha_i g_i,$ it holds that
$$
\frac12\|z_i-u\|^2-\frac12\|z_{i-1}-u\|^2
=-\alpha_i\langle g_i,z_{i-1}-u\rangle +\frac{\alpha_i^2}{2}\|g_i\|^2.
$$
Using $\theta_i=g_i-\nabla f(x_i)$, we obtain
$$
\Phi_i(u)-\Phi_{i-1}(u) \le \alpha_i\langle\theta_i,u-z_{i-1}\rangle+
\alpha_i^2\left[\langle\theta_i,g_i\rangle
-\left(\frac12-\frac{4L_0\alpha_i^2}{A_i}\right)\|g_i\|^2 \right],
$$
where we used  $-\langle\nabla f(x_i),g_i\rangle =-\langle g_i-\theta_i,g_i\rangle=\langle\theta_i,g_i\rangle-\|g_i\|^2.$

By assumption, $A_i\ge 16L_0\alpha_i^2,$ and therefore $\frac{4L_0\alpha_i^2}{A_i}\le\frac14.$  Consequently, 
$$
\langle\theta_i,g_i\rangle-\left(\frac12-\frac{4L_0\alpha_i^2}{A_i}\right)\|g_i\|^2\le
\langle\theta_i,g_i\rangle-\frac14\|g_i\|^2 =\|\theta_i\|^2-\frac14\|g_i-2\theta_i\|^2 \le \|\theta_i\|^2.
$$
We deduce that
$$
\Phi_i(u)-\Phi_{i-1}(u)\le \alpha_i\langle\theta_i,u-z_{i-1}\rangle +\alpha_i^2\|\theta_i\|^2.
$$
Summing over $i=1,\ldots,t$ yields
$$
\Phi_t(u)\le\Phi_0(u)+\mathcal R_t(u).
$$
Finally, $A_0=0$ and $z_0=\bar x$ imply
$\Phi_0(u)=\frac12\|\bar x-u\|^2$, so
$$
A_t\bigl(f(y_t)-f(u)\bigr)
+\frac12\|z_t-u\|^2
\le
\frac12\|\bar x-u\|^2+\mathcal R_t(u),
$$

\end{proof}

\begin{proof}[Proof of \cref{lem:grad_bound_smoothness}]
If $t=0$, then $x_1=\bar x$, and by
\cref{lem:self-bounding-norm-form}, we have
$$
\norm{\nabla f(x_1)}
\le\frac{c+\sqrt{c^2+2\nu c}}{\nu}\frac{L_0}{L_1}
\le(2\sqrt c+4c)\frac{L_0}{L_1}
\le\frac{L_0}{L_1},
$$
where we used $\nu\ge1/2$ and $c\le1/32$.

Now suppose $t\ge1$. Applying \cref{lem:comparator} with $u=\bar x$, we obtain, for every $j=1,\ldots,t$,
$$
A_j\bigl(f(y_j)-f^\star\bigr)+\frac12\norm{z_j-\bar x}^2
\le A_j\bigl(f(\bar x)-f^\star\bigr)+\mathcal R_j(\bar x)
\le\frac{2cA_jL_0}{L_1^2}.
$$
Then
$$
\begin{cases}
\displaystyle f(y_t)-f^\star\le\frac{2cL_0}{L_1^2},\\[2mm]
\displaystyle \norm{z_j-\bar x}
\le\sqrt{2A_j\bigl(f(\bar x)-f^\star\bigr)+2\mathcal R_j(\bar x)}
\le\sqrt{\frac{4cA_jL_0}{L_1^2}}=\frac{2\sqrt{cA_jL_0}}{L_1}.
\end{cases}
$$
By \cref{lem:self-bounding-norm-form} and $\nu\ge1/2$, we obtain
$$
\begin{aligned}
\norm{\nabla f(y_t)}
&\le\frac{2c+\sqrt{4c^2+4\nu c}}{\nu}\frac{L_0}{L_1}\\
&\le\left(4c+\sqrt{16c^2+8c}\right)\frac{L_0}{L_1}\\
&\le\frac{1+\sqrt{17}}8\frac{L_0}{L_1},
\end{aligned}
$$
where the last inequality uses $c\le\frac1{32}$.
Using  $y_t=A_t^{-1}\sum_{j=1}^t\alpha_jz_j$
and $A_j\le A_t$ for all $j \in \{1,\ldots,t \}$
$$
\norm{y_t-\bar x}
\le\frac1{A_t}\sum_{j=1}^t\alpha_j\norm{z_j-\bar x}
\le\frac{2\sqrt{cA_tL_0}}{L_1}.
$$
Therefore, using the definition of $x_{t+1}$ and the triangle inequality,
$$
\begin{aligned}
L_1\norm{x_{t+1}-y_t}
&=\frac{L_1\alpha_{t+1}}{A_{t+1}}\norm{z_t-y_t}\\
&\le\frac{4\alpha_{t+1}\sqrt{cA_tL_0}}{A_{t+1}}\\
&\le4\alpha_{t+1}\sqrt{\frac{cL_0}{A_{t+1}}}\\
&\le\sqrt c
\le\frac1{4\sqrt2},
\end{aligned}
$$
where we used $A_t\le A_{t+1}$,
$A_{t+1}\ge16L_0\alpha_{t+1}^2$,
and $c\le\frac1{32}$.

By \cref{lem:gradient-growth}, we obtain
$$
\begin{aligned}
L_0+L_1\norm{\nabla f(x_{t+1})}
&\le e^{L_1\norm{x_{t+1}-y_t}}
\bigl(L_0+L_1\norm{\nabla f(y_t)}\bigr)\\
&\le e^{1/(4\sqrt2)}\frac{9+\sqrt{17}}8L_0\\
&\le2L_0.
\end{aligned}
$$
Therefore, $\norm{\nabla f(x_{t+1})}\le\frac{L_0}{L_1}$.

Finally, for every $t \ge 0$, the update rule and clipping give
$$
L_1\norm{y_{t+1}-x_{t+1}}
=\frac{L_1\alpha_{t+1}^2}{A_{t+1}}\norm{g_{t+1}}
\le\frac{L_1\alpha_{t+1}^2}{A_{t+1}}\lambda_{t+1}
\le 1,
$$
where we used
$\lambda_{t+1}\le\frac{A_{t+1}}{L_1\alpha_{t+1}^2}$.
Applying \cref{lem:local-smoothness-short-segment}, we obtain
$$
\begin{aligned}
f(y_{t+1})
&\le f(x_{t+1})
+\ip{\nabla f(x_{t+1})}{y_{t+1}-x_{t+1}}+2\bigl(L_0+L_1\norm{\nabla f(x_{t+1})}\bigr)
\norm{y_{t+1}-x_{t+1}}^2\\
&\le f(x_{t+1}) +\ip{\nabla f(x_{t+1})}{y_{t+1}-x_{t+1}} +4L_0\norm{y_{t+1}-x_{t+1}}^2.
\end{aligned}
$$
\end{proof}

\begin{proof}[Proof of \cref{lem:gradient-below-clipping}]
By \cref{lem:self-bounding-norm-form} and $\nu\ge1/2$,
$$
f(x)-f^\star\le\frac{L_0}{16L_1^2}
\implies
\norm{\nabla f(x)}
\le3\sqrt{L_0\bigl(f(x)-f^\star\bigr)}.
$$

If $t=0$, then $x_1=\bar x$. By the preceding implication and the convexity bound
$f(\bar x)-f^\star\le R\norm{\nabla f(\bar x)}$, we obtain
$$
\norm{\nabla f(x_1)}
\le\min\left\{\frac{3\sqrt cL_0}{L_1},9L_0R\right\}.
$$
Since $A_1=A_0+\alpha_1=\alpha_1$ and
$A_1\ge96\ell^2L_0\alpha_1^2$, we have
$$
\alpha_1\ell\sqrt{\frac{L_0}{A_1}}\le\frac1{\sqrt{96}}
\quad\text{and}\quad
\alpha_1\ell L_0\le\frac1{96\ell}.
$$
Therefore,
$$
\alpha_1\ell\norm{\nabla f(x_1)} \le\min\left\{
\frac3{\sqrt{96}}\frac{\sqrt{cA_1L_0}}{L_1},\frac9{96\ell}R
\right\} \le\frac12\min\left\{\frac{\sqrt{cA_1L_0}}{L_1},R\right\},
$$
where we used $\ell\ge4$. Hence
$\norm{\nabla f(x_1)}\le\lambda_1/2$.

Now suppose $t\ge1$.
Since $A_i\ge96\ell^2L_0\alpha_i^2\ge16L_0\alpha_i^2$,
applying \cref{lem:comparator} with $u=\bar x$
and $u=x^\star$, we obtain
$$
f(y_t)-f^\star
\le\min\left\{\frac{2cL_0}{L_1^2},\frac{R^2}{A_t}\right\}.
$$
Similarly to the proof of \cref{lem:grad_bound_smoothness},
we obtain, for every $j=1,\ldots,t$,
$$
\begin{cases}
\displaystyle
\norm{z_j-\bar x}\le\frac{2\sqrt{cA_jL_0}}{L_1},\\[2mm]
\displaystyle
\norm{z_j-x^\star}\le\sqrt2R.
\end{cases}
$$
Since $y_t=A_t^{-1}\sum_{j=1}^t\alpha_jz_j$, it follows that
$$
\norm{z_t-y_t}
\le\min\left\{\frac{4\sqrt{cA_tL_0}}{L_1},2\sqrt2R\right\}.
$$

As in the proof of \cref{lem:grad_bound_smoothness}, we obtain $\norm{\nabla f(y_t)}\le\frac{3L_0}{4L_1}.$ Moreover,
$$
\begin{aligned}
L_1\norm{x_{t+1}-y_t}
&=\frac{L_1\alpha_{t+1}}{A_{t+1}}\norm{z_t-y_t}\\
&\le\frac{4\alpha_{t+1}\sqrt{cA_tL_0}}{A_{t+1}}\\
&\le4\sqrt c\,\alpha_{t+1}\sqrt{\frac{L_0}{A_{t+1}}}\\
&\le\frac{4\sqrt c}{\sqrt{96}\ell}
\le\frac18,
\end{aligned}
$$
where we used $A_t\le A_{t+1}$,
$A_{t+1}\ge96\ell^2L_0\alpha_{t+1}^2$,
$c\le1/32$, and $\ell\ge4$.

By \cref{lem:gradient-growth}, every $v\in[y_t,x_{t+1}]$
therefore satisfies
$L_0+L_1\norm{\nabla f(v)}\le\frac74e^{1/8}L_0\le2L_0$.
Hence, by \cref{asm: smoothness},
$$
\begin{aligned}
\norm{\nabla f(x_{t+1})}&\le\norm{\nabla f(y_t)}+\norm{\nabla f(x_{t+1})-\nabla f(y_t)}\\
&\le\norm{\nabla f(y_t)}+\left(L_0+L_1\sup_{v\in[y_t,x_{t+1}]}\norm{\nabla f(v)}\right)
\norm{x_{t+1}-y_t}\\
&\le\norm{\nabla f(y_t)}+2L_0\norm{x_{t+1}-y_t}\\
&\le3\sqrt{L_0\bigl(f(y_t)-f^\star\bigr)}+\frac{2L_0\alpha_{t+1}}{A_{t+1}}\norm{z_t-y_t}.
\end{aligned}
$$
Substituting the preceding bounds, we obtain
$$
\begin{cases}
\displaystyle \norm{\nabla f(x_{t+1})} \le
\left(3\sqrt2+\frac8{\sqrt{96}\ell}\right) \frac{\sqrt cL_0}{L_1},\\[3mm]
\displaystyle \norm{\nabla f(x_{t+1})} \le
\left(3\sqrt{\frac52}+\frac{4\sqrt2}{\sqrt{96}\ell}\right)
R\sqrt{\frac{L_0}{A_{t+1}}},
\end{cases}
$$
where we used $A_t\le A_{t+1}\le\frac52A_t$
and $A_{t+1}\ge96\ell^2L_0\alpha_{t+1}^2$.

We have $\lambda_{t+1}:=\frac1{\alpha_{t+1}\ell}\min\left\{\frac{\sqrt{cA_{t+1}L_0}}{L_1},R
\right\}.$ Moreover, $A_{t+1}\ge96\ell^2L_0\alpha_{t+1}^2$
implies
$$
\alpha_{t+1}\ell\sqrt{\frac{L_0}{A_{t+1}}} \le\frac1{\sqrt{96}}.
$$
Multiplying the two gradient bounds above by
$\alpha_{t+1}\ell$, we  obtain
$$
\begin{cases}
\displaystyle \alpha_{t+1}\ell\norm{\nabla f(x_{t+1})}
\le\left(\frac{\sqrt3}{4}+\frac1{12\ell}\right)
\frac{\sqrt{cA_{t+1}L_0}}{L_1}
\le\left(\frac{\sqrt3}{4}+\frac1{48}\right)
\frac{\sqrt{cA_{t+1}L_0}}{L_1}
<\frac12\frac{\sqrt{cA_{t+1}L_0}}{L_1},\\[3mm]
\displaystyle \alpha_{t+1}\ell\norm{\nabla f(x_{t+1})}
\le\left(\frac{\sqrt{15}}8+\frac{\sqrt2}{24\ell}\right)R
\le\left(\frac{\sqrt{15}}8+\frac{\sqrt2}{96}\right)R
<\frac12R.
\end{cases}
$$
Therefore
$$
\norm{\nabla f(x_{t+1})} \le \frac1{2\alpha_{t+1}\ell}
\min\left\{\frac{\sqrt{cA_{t+1}L_0}}{L_1},R\right\}=\frac{\lambda_{t+1}}2.
$$
\end{proof}

\begin{proof}[Proof of \cref{lem:stopped-residuals}]
For $t=1,\ldots,K$, let $S_t:=\sigma^\alpha\sum_{i=1}^t\frac{\alpha_i}{s_i\lambda_i^{\alpha-1}}.$
By assumption, $S_t\le S_K\le 1/512$.

For $i\ge2$, on $\mathcal E_{i-1}$, Lemma~\ref{lem:comparator} gives
$$
\begin{cases}
\norm{\bar x-z_{i-1}}\le 2 \frac{\sqrt{cA_{i-1}L_0}}{L_1} \le   2\frac{\sqrt{cA_iL_0}}{L_1},\\[2mm]
\norm{x^\star-z_{i-1}}\le \sqrt{2}R.
\end{cases}
$$
We also have $
\norm{\bar x-z_{i-1}} \le \norm{\bar x-x^\star}+\norm{x^\star-z_{i-1}} \le (1+\sqrt2)R.$

By the definition of $s_i$, for every $i$, we have
$$
\begin{cases}
\norm{\bar x-z_{i-1}}\le (1+\sqrt{2})s_i\le \frac52 s_i,\\[2mm]
\norm{x^\star-z_{i-1}}\le \sqrt{2}R.
\end{cases}
$$
The same bounds hold for $i=1$ since $z_0=\bar x$.

We have
$$
B_t(u)\le \sum_{i=1}^t\1_{\mathcal E_{i-1}} \left(\frac{2^\alpha\sigma^\alpha\alpha_i}
{\lambda_i^{\alpha-1}}\norm{u-z_{i-1}}+2\sigma^\alpha\alpha_i^2\lambda_i^{2-\alpha}
\right).
$$

Using $\alpha_i\lambda_i=s_i/\ell$, we have $\sigma^\alpha\alpha_i^2\lambda_i^{2-\alpha}
=\frac{\sigma^\alpha\alpha_i}{\lambda_i^{\alpha-1}}\frac{s_i}{\ell}.$ Hence, since $2^\alpha\le4$,
$$
B_t(\bar x)
\le \left(10+\frac{2}{\ell}\right)\sum_{i=1}^t\frac{\sigma^\alpha\alpha_i}{s_i\lambda_i^{\alpha-1}}s_i^2.
$$
Since $s_i^2\le cA_iL_0/L_1^2\le cA_tL_0/L_1^2$,
$$
B_t(\bar x) \le\left(10+\frac{2}{\ell}\right) \frac{cA_tL_0}{L_1^2}S_t <\frac1{32}\frac{cA_tL_0}{L_1^2},
$$
where we used $\ell\ge4$ and $S_t\le1/512$.

Similarly,
$$
B_t(x^\star) \le \left(4\sqrt2+\frac{2}{\ell}\right) R^2S_t <\frac{R^2}{64}.
$$
Recall that $\theta_i:=g_i-\nabla f(x_i),$ where $G_i:=\nabla f(x_i,\xi_i)$ and $g_i:=\operatorname{clip}(G_i,\lambda_i)$.
We now control the martingale term $M_t(u)$. Its $i$-th summand is
$$
\1_{\mathcal E_{i-1}}\left(\alpha_i\left\langle\theta_i-\E[\theta_i\mid\mathcal F_{i-1}],
u-z_{i-1}\right\rangle+\alpha_i^2\left(\norm{\theta_i}^2-\E[\norm{\theta_i}^2\mid\mathcal F_{i-1}]
\right)\right).
$$
On $\mathcal E_{i-1}$,
we have $\norm{\theta_i}\le \frac32\lambda_i$, and therefore
$$
\norm{\theta_i-\E[\theta_i\mid\mathcal F_{i-1}]} \le 3\lambda_i,
$$
and
$$
\left|\norm{\theta_i}^2-\E[\norm{\theta_i}^2\mid\mathcal F_{i-1}]\right|\le \frac94\lambda_i^2.
$$
Hence, for $u=\bar x$, the absolute value of the $i$-th summand
of $M_t(\bar x)$ is bounded by
$$
3\alpha_i\lambda_i\norm{\bar x-z_{i-1}}+\frac94\alpha_i^2\lambda_i^2\le\frac{15}{2}\alpha_i\lambda_i s_i+\frac94\alpha_i^2\lambda_i^2.
$$
Using $\alpha_i\lambda_i=s_i/\ell$, for every $i\le t$,
$$
\frac{15}{2}\alpha_i\lambda_i s_i+\frac94\alpha_i^2\lambda_i^2=\left(\frac{15}{2\ell}+\frac{9}{4\ell^2}\right)s_i^2
\le \frac9\ell\frac{cA_tL_0}{L_1^2}.
$$
Similarly, for $u=x^\star$, the absolute value of the $i$-th
summand is bounded by
$$
3\sqrt2\,\alpha_i\lambda_iR +\frac94\alpha_i^2\lambda_i^2=\frac{3\sqrt2}{\ell}s_iR
+\frac{9}{4\ell^2}s_i^2 \le \frac{5R^2}{\ell}.
$$
We next bound the conditional variances. Since 
$$
\begin{cases}
\displaystyle \E\!\left[\norm{\theta_i-\E[\theta_i\mid\mathcal F_{i-1}]}^2
\mid\mathcal F_{i-1}\right] \le \E[\norm{\theta_i}^2\mid\mathcal F_{i-1}]
\le 2\sigma^\alpha\lambda_i^{2-\alpha},\\[4mm]
\displaystyle \E\!\left[ \left(\norm{\theta_i}^2-\E[\norm{\theta_i}^2\mid\mathcal F_{i-1}]
\right)^2 \Bigm|\mathcal F_{i-1} \right]\le \E[\norm{\theta_i}^4\mid\mathcal F_{i-1}]
\le \frac92\sigma^\alpha\lambda_i^{4-\alpha},
\end{cases}
$$
using $(a+b)^2\le2a^2+2b^2$, the conditional variance of the $i$-th summand of $M_t(u)$ is at most
$$
4\sigma^\alpha\alpha_i^2\lambda_i^{2-\alpha} \norm{u-z_{i-1}}^2+ 9\sigma^\alpha\alpha_i^4
\lambda_i^{4-\alpha}.
$$

Using again $\alpha_i\lambda_i=s_i/\ell$, we have
$$
\sigma^\alpha\alpha_i^2\lambda_i^{2-\alpha}=\frac{\sigma^\alpha\alpha_i}
{s_i\lambda_i^{\alpha-1}}\frac{s_i^2}{\ell},
$$
and
$$
\sigma^\alpha\alpha_i^4\lambda_i^{4-\alpha}=\frac{\sigma^\alpha\alpha_i}{s_i\lambda_i^{\alpha-1}}\frac{s_i^4}{\ell^3}.
$$
Therefore,
$$
\sum_{i=1}^t \E\left[\text{($i$-th summand of $M_t(\bar x)$)}^2
\mid\mathcal F_{i-1}\right] \le \left(\frac{25}{\ell}+\frac9{\ell^3}\right)
\left(\frac{cA_tL_0}{L_1^2}\right)^2S_t \le \frac{26}{512\ell} \left(\frac{cA_tL_0}{L_1^2}\right)^2,
$$
where we used $\ell\ge4$ and $S_t\le1/512$.

Similarly, we obtain
$$
\sum_{i=1}^t\E\left[ \text{($i$-th summand of $M_t(x^\star)$)}^2\mid\mathcal F_{i-1}
\right] \le \left(\frac8\ell+\frac9{\ell^3}\right)R^4S_t \le \frac{9R^4}{512\ell}.
$$

Fix $t\in\{1,\ldots,K\}$. For $u=\bar x$, the martingale differences are bounded in absolute value by $\frac9\ell\frac{cA_tL_0}{L_1^2}$, while their conditional variances sum to at most $\frac{26}{512\ell}\left(\frac{cA_tL_0}{L_1^2}\right)^2$. Hence, by \cref{cor:freedman-hp}, with probability at least $1-\delta/(2K)$,
$$
|M_t(\bar x)|\le \frac{cA_tL_0}{L_1^2}\left(\sqrt{\frac{52}{512\ell}\log\frac{4K}{\delta}}+\frac6\ell\log\frac{4K}{\delta}\right).
$$
Since $\ell\ge8\log(4K/\delta)$, the term in parentheses is at most $\frac{\sqrt{13}}{32}+\frac34<\frac78$. Thus
$$
|M_t(\bar x)|<\frac78\frac{cA_tL_0}{L_1^2}.
$$
Combining this with $B_t(\bar x)<\frac1{32}\frac{cA_tL_0}{L_1^2}$, we obtain
$$
\widehat{\mathcal R}_t(\bar x)=M_t(\bar x)+B_t(\bar x)<\left(\frac78+\frac1{32}\right)\frac{cA_tL_0}{L_1^2} <\frac{cA_tL_0}{L_1^2}.
$$

Similarly, for $u=x^\star$, the martingale differences are bounded in absolute value by $5R^2/\ell$, while their conditional variances sum to at most $9R^4/(512\ell)$. Therefore, with probability at least $1-\delta/(2K)$,
$$
|M_t(x^\star)|\le R^2\left(\sqrt{\frac{18}{512\ell}\log\frac{4K}{\delta}}+\frac{10}{3\ell}\log\frac{4K}{\delta}\right).
$$
Since $\ell\ge8\log(4K/\delta)$, the term in parentheses is at most $\frac{3}{16\sqrt8}+\frac5{12}<\frac{29}{60}$. Hence,
$$
\widehat{\mathcal R}_t(x^\star)\le |M_t(x^\star)|+B_t(x^\star) <\left(\frac{29}{60}+\frac1{64}\right)R^2 <\frac{R^2}{2}.
$$

Taking a union bound over $t=1,\ldots,K$ and $u\in\{\bar x,x^\star\}$, with probability at least $1-\delta$, simultaneously for every $t=1,\ldots,K$,
$$
\begin{cases}
\widehat{\mathcal R}_t(\bar x)\le \dfrac{cA_tL_0}{L_1^2},\\[2mm]
\widehat{\mathcal R}_t(x^\star)\le \dfrac{R^2}{2}.
\end{cases}
$$

It remains to show that $\mathcal E_K$ holds on this event. We argue by induction. For $t=0$, there are no conditions, so $\1_{\mathcal E_0}=1$. Suppose that $\mathcal E_{t-1}$ holds for some $t\in\{1,\ldots,K\}$. Since
$A_t\ge96\ell^2L_0\alpha_t^2$, we have $A_t\ge16L_0\alpha_t^2$. Moreover, by the definition of $\lambda_t$,
$$
\lambda_t
\le\frac{\sqrt{cA_tL_0}}{\alpha_t\ell L_1}
\le\frac{A_t}{L_1\alpha_t^2},
$$
where the last inequality follows from
$A_t\ge96\ell^2L_0\alpha_t^2$.
Hence, applying \cref{lem:grad_bound_smoothness} with $t-1$, we obtain
$$
f(y_t)\le f(x_t)+\langle\nabla f(x_t),y_t-x_t\rangle+4L_0\norm{y_t-x_t}^2.
$$
Since $\mathcal E_{t-1}$ holds, $\widehat{\mathcal R}_t(u)=\mathcal R_t(u)$ for $u\in\{\bar x,x^\star\}$. Therefore,
$$
\begin{cases}
\displaystyle \mathcal R_t(\bar x)\le\frac{cA_tL_0}{L_1^2},\\[2mm]
\displaystyle \mathcal R_t(x^\star)\le\frac{R^2}{2}.
\end{cases}
$$
Thus all the conditions defining $\mathcal E_t$ hold. By induction, $\mathcal E_K$ holds.

\end{proof}

\begin{proof}[Proof of \cref{thm:main}]
Let $s_i:=\min\{\sqrt{cA_iL_0}/L_1,R\}$. We have 
$\lambda_i=s_i/(\alpha_i\ell)$, then
$$
\sigma^\alpha\sum_{i=1}^K\frac{\alpha_i}{s_i\lambda_i^{\alpha-1}}
=
\sigma^\alpha\ell^{\alpha-1}\sum_{i=1}^K
\left(\frac{\alpha_i}{s_i}\right)^\alpha.
$$
Since $s_i=\min\{\sqrt{cA_iL_0}/L_1,R\}$,
$\left(\alpha_i/s_i\right)^\alpha
\le\left(\alpha_iL_1/\sqrt{cA_iL_0}\right)^\alpha
+(\alpha_i/R)^\alpha$.
Using $A_i\ge(aL_0/2)\alpha_i^2$, $\alpha_i=(i+1)/(aL_0)$, and
$\sum_{i=1}^K(i+1)^\alpha\le2^\alpha K^{\alpha+1}$, we obtain
$$
\sigma^\alpha\sum_{i=1}^K\frac{\alpha_i}{s_i\lambda_i^{\alpha-1}}\le
\sigma^\alpha\ell^{\alpha-1} \left[K\left(\frac{2}{ca}\right)^{\alpha/2}
\left(\frac{L_1}{L_0}\right)^\alpha
+\frac{2^\alpha K^{\alpha+1}}{a^\alpha L_0^\alpha R^\alpha}\right].
$$
By the choice of $a$, together with
$q^\alpha=1024\ell^{\alpha-1}K$, each of the two terms above is bounded by $1/1024$. Therefore their sum is at most $1/512$. Since $a\ge192\ell^2$ as well, all the conditions of \cref{lem:stopped-residuals} are satisfied. Hence, with probability at least $1-\delta$, the event $\mathcal E_K$ holds.

On $\mathcal E_K$, applying \cref{lem:comparator} with $u=x^\star$ gives
$f(y_K)-f^\star\le R^2/A_K=2aL_0R^2/[K(K+3)]$. Applying the same lemma with $u=\bar x$ yields
$f(y_K)-f^\star\le2cL_0/L_1^2$. Combining the two bounds, we obtain
$$
f(y_K)-f^\star \le \min\left\{\frac{2cL_0}{L_1^2},\frac{2aL_0R^2}{K(K+3)}\right\}.
$$
Using $K(K+3)\ge K^2$, $\max\{x,y,z\}\le x+y+z$, and the definition of
$a$, we get
$$
\frac{2aL_0R^2}{K(K+3)} \le \frac{384L_0R^2\ell^2}{K^2}+\frac{4q^2\sigma^2L_1^2R^2}{cL_0K^2}
+\frac{4q\sigma R}{K}.
$$
Therefore, using
$\min\{x,r+s+t\}\le r+t+\min\{x,s\}$ for nonnegative quantities,
$$
f(y_K)-f^\star \le\frac{384L_0R^2\ell^2}{K^2}+\frac{4q\sigma R}{K}+
\min\left\{\frac{2cL_0}{L_1^2},\frac{4q^2\sigma^2L_1^2R^2}{cL_0K^2}\right\}.
$$
Finally, using $\min\{x,y\}\le\sqrt{xy}$,
$$
\min\left\{ \frac{2cL_0}{L_1^2}, \frac{4q^2\sigma^2L_1^2R^2}{cL_0K^2}\right\}
\le \frac{2\sqrt2\,q\sigma R}{K}.
$$
Hence,
$$
f(y_K)-f^\star \le \frac{384L_0R^2\ell^2}{K^2} +(4+2\sqrt2)\frac{q\sigma R}{K}.
$$
\end{proof}

\begin{proof}[Proof of \cref{cor:end-to-end}]
Let $M:=\max\left\{L_1R_0,\log\frac{24}{\delta}\right\}.$ For Phase I, we run \algname{Clip-SGD} with the parameters of \cite[Theorem~2]{kadi2026highprobabilityconvergenceclippedsgd} for $K_1$ iterations, where $K_1$ is chosen large enough so that
$$
K_1\ge (L_1R_0)^2, \qquad K_1\ge L_1R_0 M^{\frac{\alpha-1}{2\alpha}},
$$
and the iteration requirement of the cited result is satisfied. 

Since $\alpha\in(1,2]$, we have $0<\frac{\alpha-1}{2\alpha}\le \frac14.$ Then
$$
L_1R_0 M^{\frac{\alpha-1}{2\alpha}} \le \max\left\{ (L_1R_0)^{1+\frac{\alpha-1}{2\alpha}},
\,L_1R_0\left(\log\frac{24}{\delta}\right)^{\frac{\alpha-1}{2\alpha}}\right\}
= \widetilde{\mathcal O}\left(1+(L_1R_0)^2\right).
$$
The additional iteration requirement in
\cite[Theorem~2]{kadi2026highprobabilityconvergenceclippedsgd}
is also covered by the same bound up to logarithmic factors. Therefore,
it is enough to take
$$
K_1=
\widetilde{\mathcal O}\left(1+(L_1R_0)^2\right).
$$

By \cite{kadi2026highprobabilityconvergenceclippedsgd},  with probability at least $1-\delta/2$, the output $\bar x$ of Phase I satisfies
$\|\bar x-x^\star\|\le 2R_0$ and $f(\bar x)-f^\star\le B,$
where, for some universal constant $C>0$,
$$
B:=C\max\left\{\frac{L_0R_0^2}{K_1},\,\sigma R_0\left(\frac{M}{K_1}\right)^{\frac{\alpha-1}{\alpha}}
\right\}.
$$

Let $\bar L_0:=L_0+32L_1^2B.$ Since $\bar L_0\ge L_0$, Assumption~\ref{asm: smoothness} also holds
with $(\bar L_0,L_1)$ in place of $(L_0,L_1)$. Moreover,
$$
f(\bar x)-f^\star \le B \le \frac{\bar L_0}{32L_1^2}.
$$
Hence Theorem~\ref{thm:main} can be applied in Phase II with $L_0$
replaced by $\bar L_0$, $R=2R_0$, and failure probability $\delta/2$.
Therefore, Phase II reaches accuracy $\varepsilon$ after
$$
\widetilde{\mathcal O}\left(\sqrt{\frac{\bar L_0R_0^2}{\varepsilon}}+\left(\frac{\sigma R_0}{\varepsilon}\right)^{\frac{\alpha}{\alpha-1}}\right)
$$
stochastic-gradient evaluations.

Now we bound the first term. Using $\sqrt{a+b}\le\sqrt a+\sqrt b$, we obtain

$$
\sqrt{\frac{\bar L_0R_0^2}{\varepsilon}} \lesssim \sqrt{\frac{L_0R_0^2}{\varepsilon}}
+ L_1R_0\sqrt{\frac{B}{\varepsilon}}.
$$
Using the definition of $B$, we have
$$
L_1R_0\sqrt{\frac{B}{\varepsilon}} \lesssim \frac{L_1R_0}{\sqrt{K_1}}\sqrt{\frac{L_0R_0^2}{\varepsilon}}+L_1R_0  \sqrt{\frac{\sigma R_0}{\varepsilon}} \left(\frac{M}{K_1}\right)^{\frac{\alpha-1}{2\alpha}}.
$$
since $K_1\gtrsim (L_1R_0)^2$, the first term is bounded by $\sqrt{L_0R_0^2/\varepsilon}.$ For the second term, we have
$$
L_1R_0 \sqrt{\frac{\sigma R_0}{\varepsilon}} \left(\frac{M}{K_1}\right)^{\frac{\alpha-1}{2\alpha}}
=L_1R_0M^{\frac{\alpha-1}{2\alpha}}\left(\frac{\left(\frac{\sigma R_0}{\varepsilon}\right)^{\frac{\alpha}{\alpha-1}}}{K_1}\right)^{\frac{\alpha-1}{2\alpha}}.
$$
Since $L_1R_0M^{\frac{\alpha-1}{2\alpha}}=\widetilde{\mathcal O}(K_1)$ and $0<(\alpha-1)/(2\alpha)\le1,$
the inequality $x^r\le1+x$ for $x\ge0$ and $0<r\le1$ gives
$$
L_1R_0\sqrt{\frac{\sigma R_0}{\varepsilon}} \left(\frac{M}{K_1}\right)^{\frac{\alpha-1}{2\alpha}}
=\widetilde{\mathcal O}\left(K_1+\left(\frac{\sigma R_0}{\varepsilon}\right)^{\frac{\alpha}{\alpha-1}}
\right).
$$
Consequently,
$$
\sqrt{\frac{\bar L_0R_0^2}{\varepsilon}}=\widetilde{\mathcal O}\left(
\sqrt{\frac{L_0R_0^2}{\varepsilon}}
+K_1+\left(\frac{\sigma R_0}{\varepsilon}\right)^{\frac{\alpha}{\alpha-1}}\right).
$$
Combining the two phases and using
$$
K_1=\widetilde{\mathcal O}\left(1+(L_1R_0)^2\right)
$$
we obtain a total complexity of
$$
\widetilde{\mathcal O}\left(1+(L_1R_0)^2 +\sqrt{\frac{L_0R_0^2}{\varepsilon}} +\left(\frac{\sigma R_0}{\varepsilon}\right)^{\frac{\alpha}{\alpha-1}}\right).
$$
\end{proof}

\section{High-Probability Analysis for the One-Phase Result}
\label{app:one-phase}

Throughout this section, assume
$$
\begin{cases}
\norm{x_0-x^\star}\le R_0,\\[0.5em]
\Delta_0:=f(x_0)-f^\star\le \bar\Delta_0.
\end{cases}
$$
We also use
$$
\mathcal R_t(u):=\sum_{i=1}^t\alpha_i\langle\theta_i,u-z_{i-1}\rangle+
\sum_{i=1}^t \alpha_i^2\norm{\theta_i}^2, \qquad
u\in\mathbb R^d,$$
as introduced in the two-phase analysis.

\begin{lemma}
\label{lem:comparator2}
Suppose that \cref{asm: convexity} holds. Let $A_0\ge0$, $y_0=z_0=v_0$, and let $t\in\{1,\ldots,K\}$.
Assume that, for every $i=1,\ldots,t$,
$$
f(y_i)\le f(x_i) +\langle\nabla f(x_i),y_i-x_i\rangle+\frac{A_i}{4\alpha_i^2}\norm{y_i-x_i}^2.
$$
Then, for every $u\in\mathbb R^d$,
$$
A_t\bigl(f(y_t)-f(u)\bigr)+\frac12\norm{z_t-u}^2\le A_0\bigl(f(v_0)-f(u)\bigr)
+\frac12\norm{v_0-u}^2+\mathcal R_t(u).
$$
\end{lemma}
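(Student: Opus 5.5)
The proof will mirror that of \cref{lem:comparator}, with two changes: the constant $4L_0$ is replaced by $A_i/(4\alpha_i^2)$, and the initial potential is kept general. Fix $u\in\mathbb R^d$ and define the potential
$$
\Phi_i(u):=A_i\bigl(f(y_i)-f(u)\bigr)+\tfrac12\norm{z_i-u}^2, \qquad i=0,\ldots,t.
$$
Then $\Phi_0(u)=A_0(f(v_0)-f(u))+\frac12\norm{v_0-u}^2$, since $y_0=z_0=v_0$. The goal is the one-step inequality
$$
\Phi_i(u)-\Phi_{i-1}(u)\le\alpha_i\ip{\theta_i}{u-z_{i-1}}+\alpha_i^2\norm{\theta_i}^2 .
$$
Summing this over $i=1,\ldots,t$ gives the claim directly.

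For the one-step bound I will first use convexity twice, at $y_{i-1}$ and at $u$, both linearized at $x_i$. I weight the two inequalities by $A_{i-1}$ and $\alpha_i$ and apply the comparator identity $A_{i-1}y_{i-1}+\alpha_iu-A_ix_i=\alpha_i(u-z_{i-1})$. This gives
$$
A_if(x_i)-A_{i-1}f(y_{i-1})-\alpha_if(u)\le\alpha_i\ip{\nabla f(x_i)}{z_{i-1}-u}.
$$
This step is valid for $A_{i-1}\ge0$, including $A_0>0$. Next I substitute the output displacement $y_i-x_i=-\frac{\alpha_i^2}{A_i}g_i$ into the assumed descent inequality and multiply by $A_i$:
$$
A_i\bigl(f(y_i)-f(x_i)\bigr)\le-\alpha_i^2\ip{\nabla f(x_i)}{g_i}+\tfrac{A_i}{4\alpha_i^2}\cdot\tfrac{\alpha_i^4}{A_i^2}A_i\norm{g_i}^2=-\alpha_i^2\ip{\nabla f(x_i)}{g_i}+\tfrac{\alpha_i^2}{4}\norm{g_i}^2 .
$$

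The third ingredient is the squared-distance identity $\frac12\norm{z_i-u}^2-\frac12\norm{z_{i-1}-u}^2=-\alpha_i\ip{g_i}{z_{i-1}-u}+\frac{\alpha_i^2}{2}\norm{g_i}^2$. Adding the three pieces and writing $\nabla f(x_i)=g_i-\theta_i$, the linear terms combine into $\alpha_i\ip{\theta_i}{u-z_{i-1}}$. The quadratic terms become
$$
\alpha_i^2\Bigl[\ip{\theta_i}{g_i}-\norm{g_i}^2+\tfrac14\norm{g_i}^2+\tfrac12\norm{g_i}^2\Bigr]=\alpha_i^2\Bigl[\ip{\theta_i}{g_i}-\tfrac14\norm{g_i}^2\Bigr]\le\alpha_i^2\norm{\theta_i}^2,
$$
where the last step uses $\ip{\theta}{g}-\frac14\norm{g}^2=\norm{\theta}^2-\frac14\norm{g-2\theta}^2$.

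No step is a real obstacle; the only care needed is the coefficient bookkeeping in the quadratic term. The weight $A_i/(4\alpha_i^2)$ is chosen precisely so that the descent term equals $\frac{\alpha_i^2}{4}\norm{g_i}^2$, which is the same as the case $4L_0\alpha_i^2/A_i=\frac14$ in \cref{lem:comparator}. The other point to check is that nothing uses $A_0=0$, apart from the final evaluation of $\Phi_0$.
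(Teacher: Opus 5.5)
Your proposal is correct and matches the paper's proof step for step. The paper uses the same potential $\Phi_i(u)$, the same weighted convexity inequalities at $x_i$, the same substitution of $y_i-x_i=-\frac{\alpha_i^2}{A_i}g_i$ into the descent condition, the same squared-distance identity, and the same completion-of-squares bound $\langle\theta_i,g_i\rangle-\frac14\|g_i\|^2\le\|\theta_i\|^2$, before summing over $i$.
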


\begin{lemma}
\label{lem:one-phase-descent}
Suppose that Assumptions \ref{asm: convexity} and \ref{asm: smoothness} hold,
where $L_0,L_1>0$. Let $v_0=x_0$, $\ell\ge1$,
$a\ge32$, and suppose that $A_0\Delta_0\le\frac{R_0^2}{64}.$
Assume that, for every $k=0,\ldots,K-1$,
$$
\begin{cases}
\dfrac{\alpha_{k+1}}{A_k}\le  \dfrac{1}{64\ell(1+L_1R_0)},\\[0.8em]
A_k\ge aL_0\alpha_{k+1}^2.
\end{cases}
$$
For $i=1,\ldots,K$, let
$$
\lambda_i:=\frac{R_0}{\alpha_i\ell}.
$$
Let $t\in\{0,\ldots,K-1\}$ and suppose that, for every
$i=1,\ldots,t$,
$$
\begin{cases}
f(y_i) \le f(x_i)+\langle\nabla f(x_i),y_i-x_i\rangle +\dfrac{A_i}{4\alpha_i^2}\norm{y_i-x_i}^2,\\[0.8em]
\mathcal R_i(x^\star)\le\dfrac{R_0^2}{2}.
\end{cases}
$$
Then
$$
f(y_{t+1}) \le f(x_{t+1})+\langle\nabla f(x_{t+1}),y_{t+1}-x_{t+1}\rangle
+\frac{A_{t+1}}{4\alpha_{t+1}^2}\norm{y_{t+1}-x_{t+1}}^2.
$$
\end{lemma}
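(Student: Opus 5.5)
The plan is to mirror the proof of \cref{lem:grad_bound_smoothness}, with the $\bar x$-potential replaced by the $x^\star$-potential. The small-gap scale $L_0/L_1^2$ is replaced by a gap bound of order $R_0^2/A_t$. The weight restriction $\alpha_{t+1}/A_t\le[64\ell(1+L_1R_0)]^{-1}$ will keep all displacements short in units of $1/L_1$.

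\textbf{Step 1: localization from the potential.} Apply \cref{lem:comparator2} with $u=x^\star$ and $v_0=x_0$ at every $i\le t$. Using $A_0\Delta_0\le R_0^2/64$, $\norm{x_0-x^\star}\le R_0$ and $\mathcal R_i(x^\star)\le R_0^2/2$, this gives
$$
A_i\Delta_i+\tfrac12\norm{z_i-x^\star}^2\le\tfrac{65}{64}R_0^2.
$$
Hence $\Delta_t:=f(y_t)-f^\star\le 2R_0^2/A_t$ and $\norm{z_i-x^\star}\le\tfrac32R_0$ for all $i\le t$. These bounds also hold at $i=0$, where $\Delta_0\le R_0^2/(64A_0)$. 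By the averaging identity, $y_t$ is a convex combination of $x_0,z_1,\dots,z_t$, so $\norm{z_t-y_t}\le 3R_0$.

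\textbf{Step 2: gradient at $y_t$ and transfer to $x_{t+1}$.} By \cref{lem:self-bounding-norm-form} with $\nu\ge1/2$,
$$
\norm{\nabla f(y_t)}\lesssim L_1\Delta_t+\sqrt{L_0\Delta_t}\lesssim \frac{L_1R_0^2}{A_t}+R_0\sqrt{\frac{L_0}{A_t}}.
$$
By the query displacement identity, $L_1\norm{x_{t+1}-y_t}\le 3L_1R_0\alpha_{t+1}/A_{t+1}\le 3/64$. So \cref{lem:gradient-growth} gives $L_0+L_1\norm{\nabla f(x_{t+1})}\le e^{3/64}\bigl(L_0+L_1\norm{\nabla f(y_t)}\bigr)$. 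The target is the per-step bound
$$
L_0+L_1\norm{\nabla f(x_{t+1})}\le \frac{A_{t+1}}{8\alpha_{t+1}^2}.
$$
I would get it term by term, multiplying each term by $\alpha_{t+1}^2/A_{t+1}\le\alpha_{t+1}^2/A_t$:
\begin{itemize}
\item $L_0\alpha_{t+1}^2/A_t\le 1/a\le 1/32$;
\item $L_1R_0\sqrt{L_0/A_t}\,\alpha_{t+1}^2/A_t\le L_1R_0\,\alpha_{t+1}/(\sqrt a\,A_t)\le 1/(64\sqrt a)$, using $\alpha_{t+1}\le\sqrt{A_t/(aL_0)}$;
\item $L_1^2R_0^2\,\alpha_{t+1}^2/A_t^2\le (64)^{-2}$.
\end{itemize}
Together with the constant factors, the total stays below $1/8$.

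\textbf{Step 3: the descent inequality.} By the output displacement identity and clipping,
$$
\norm{y_{t+1}-x_{t+1}}\le\frac{\alpha_{t+1}^2}{A_{t+1}}\lambda_{t+1}=\frac{\alpha_{t+1}R_0}{\ell A_{t+1}}.
$$
Thus $L_1\norm{y_{t+1}-x_{t+1}}\le 1/64\le1$. \cref{lem:local-smoothness-short-segment} then gives the quadratic coefficient $2(L_0+L_1\norm{\nabla f(x_{t+1})})\le A_{t+1}/(4\alpha_{t+1}^2)$, which is the claim. The case $t=0$ is covered by the same chain, with $y_0=z_0=x_0$ and $\Delta_0\le R_0^2/(64A_0)$.

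The main obstacle is Step 2. The term $L_1\Delta_t$ can be large when $A_t$ is small (the high-curvature transient). It is controlled only because $\alpha_{t+1}/A_t$ is tied to $1/(L_1R_0)$ quadratically after squaring. The constants must be tracked carefully so that the exponential factor from \cref{lem:gradient-growth} and the three terms fit under $1/8$.
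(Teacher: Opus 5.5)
Your proposal is correct and follows essentially the same route as the paper's proof: localization from the $x^\star$-potential via \cref{lem:comparator2}, control of $\norm{\nabla f(y_t)}$ by \cref{lem:self-bounding-norm-form}, transfer to $x_{t+1}$ through \cref{lem:gradient-growth} with $L_1\norm{x_{t+1}-y_t}\le 3/64$, and then \cref{lem:local-smoothness-short-segment} using $L_1\norm{y_{t+1}-x_{t+1}}\le 1$. With the explicit constants $\norm{\nabla f(y_t)}\le 4L_1\Delta_t+2\sqrt{L_0\Delta_t}$, your three terms sum to $\frac{1}{32}+\frac{1}{128}+\frac{1}{512}=\frac{21}{512}$, which is exactly the paper's bound, and $e^{3/64}\cdot\frac{21}{512}<\frac18$ closes the argument.
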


\begin{lemma}
\label{lem:one-phase-gradient}
Suppose that Assumptions \ref{asm: convexity} and \ref{asm: smoothness} hold,
where $L_0,L_1>0$. Consider the iterates of Algorithm~1 with
$v_0=x_0$, $\ell\ge1$, $a\ge128\ell^2$, and suppose that $A_0\Delta_0\le\frac{R_0^2}{64}.$
Assume that, for every $k=0,\ldots,K-1$,
$$
\begin{cases}
\dfrac{\alpha_{k+1}}{A_k} \le \dfrac{1}{64\ell(1+L_1R_0)},\\[0.8em]
A_k\ge aL_0\alpha_{k+1}^2.
\end{cases}
$$
For $i=1,\ldots,K$, let
$$
\lambda_i:=\frac{R_0}{\alpha_i\ell}.
$$
Let $t\in\{0,\ldots,K-1\}$ and suppose that, for every
$i=1,\ldots,t$,
$$
\begin{cases}
f(y_i) \le f(x_i)+\langle\nabla f(x_i),y_i-x_i\rangle +\dfrac{A_i}{4\alpha_i^2}\norm{y_i-x_i}^2,\\[0.8em]
\mathcal R_i(x^\star)\le\dfrac{R_0^2}{2}.
\end{cases}
$$
Then
$$
\norm{\nabla f(x_{t+1})} \le \frac{\lambda_{t+1}}{2}.
$$
\end{lemma}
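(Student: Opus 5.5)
The proof of \cref{lem:one-phase-gradient} reuses the deterministic localization from \cref{lem:one-phase-descent} and then compares the resulting gradient bound with $\lambda_{t+1}=R_0/(\alpha_{t+1}\ell)$. The first step is to apply \cref{lem:comparator2} with $u=x^\star$ and $v_0=x_0$ for every $i\le t$, using the hypotheses. This gives $A_i\Delta(y_i)+\frac12\norm{z_i-x^\star}^2\le A_0\Delta_0+\frac12R_0^2+\frac12R_0^2\le\frac{65}{64}R_0^2$. Hence $\Delta(y_t)\le 2R_0^2/A_t$ and $\norm{z_i-x^\star}\le\frac32R_0$ for all $0\le i\le t$; the case $i=0$ uses $A_0\Delta_0\le R_0^2/64$. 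By the averaging identity, $y_t$ is a convex combination of $x_0,z_1,\ldots,z_t$, so $\norm{z_t-y_t}\le3R_0$. Together with the query displacement identity and $\alpha_{t+1}/A_{t+1}\le\alpha_{t+1}/A_t\le[64\ell(1+L_1R_0)]^{-1}$, this yields
$$
\norm{x_{t+1}-y_t}\le\frac{3R_0}{64\ell(1+L_1R_0)}
\quad\text{and}\quad
L_1\norm{x_{t+1}-y_t}\le\frac{3}{64}.
$$

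The second step bounds $\norm{\nabla f(y_t)}$. I would use \cref{lem:self-bounding-norm-form} with $\nu\ge1/2$, which gives $\norm{\nabla f(y_t)}\le 4L_1\Delta(y_t)+2\sqrt{L_0\Delta(y_t)}$ up to constants. The term $2\sqrt{L_0\Delta(y_t)}\lesssim R_0\sqrt{L_0/A_t}$ is harmless. Multiplying by $\alpha_{t+1}\ell$ and using $A_t\ge aL_0\alpha_{t+1}^2$ with $a\ge128\ell^2$ gives $\alpha_{t+1}\ell\sqrt{L_0/A_t}\le 1/\sqrt{128}$. This term therefore contributes at most a small constant times $R_0$.

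The term $L_1\Delta(y_t)\le 2L_1R_0^2/A_t$ is the main obstacle, because it must be controlled through the weight cap rather than through $a$. Multiplying by $\alpha_{t+1}\ell$ gives $2\ell L_1R_0^2\alpha_{t+1}/A_t\le 2L_1R_0^2/(64(1+L_1R_0))\le R_0/32$, which is again a small multiple of $R_0$. The same computation explains why the factor $\ell(1+L_1R_0)$ appears in the weight rule.

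The third step transfers the bound from $y_t$ to $x_{t+1}$. By \cref{lem:gradient-growth}, every $v\in[y_t,x_{t+1}]$ satisfies
$$
L_0+L_1\norm{\nabla f(v)}\le e^{3/64}\bigl(L_0+L_1\norm{\nabla f(y_t)}\bigr).
$$
By \cref{asm: smoothness}, it follows that
$$
\norm{\nabla f(x_{t+1})}\le\norm{\nabla f(y_t)}+e^{3/64}\bigl(L_0+L_1\norm{\nabla f(y_t)}\bigr)\norm{x_{t+1}-y_t}.
$$
I would multiply this bound by $\alpha_{t+1}\ell$ and treat its terms separately.
\begin{itemize}
\item The term $\alpha_{t+1}\ell L_0\norm{x_{t+1}-y_t}\le \alpha_{t+1}\ell L_0\cdot 3R_0\,\alpha_{t+1}/A_t\le 3R_0\ell/a\le 3R_0/128$ uses $A_t\ge aL_0\alpha_{t+1}^2$.
\item The term $L_1\norm{\nabla f(y_t)}\norm{x_{t+1}-y_t}\le\frac3{64}\norm{\nabla f(y_t)}$ is absorbed into the previous bound on $\norm{\nabla f(y_t)}$.
\end{itemize}
Collecting the constants gives $\alpha_{t+1}\ell\norm{\nabla f(x_{t+1})}\le R_0/2$, i.e.\ $\norm{\nabla f(x_{t+1})}\le\lambda_{t+1}/2$. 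The case $t=0$ is the same argument with $y_0=z_0=x_0$, which gives $\norm{z_0-y_0}=0$ and $\Delta(y_0)=\Delta_0\le R_0^2/(64A_0)$.
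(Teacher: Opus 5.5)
Your proposal is correct and follows the paper's proof. The steps match: localization through \cref{lem:comparator2} giving $\Delta(y_t)\le 2R_0^2/A_t$ and $\norm{z_t-y_t}\le 3R_0$; the self-bounding estimate at $y_t$, split into an $L_1$-term controlled by the weight cap (contributing $R_0/8$ after multiplying by $\alpha_{t+1}\ell$, once the factor $4$ you left as ``up to constants'' is restored) and an $L_0$-term controlled by $a\ge128\ell^2$ (contributing $R_0/4$); and a transfer to $x_{t+1}$ using $L_1\norm{x_{t+1}-y_t}\le 3/64$.

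The only difference is cosmetic. The paper applies \cref{lem:gradient-growth} directly at the endpoint together with $e^x-1\le xe^x$, whereas you bound the supremum on the segment and then invoke \cref{asm: smoothness}, as the paper does in \cref{lem:gradient-below-clipping}. Your constants still close, at roughly $0.42\,R_0<R_0/2$.
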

In the following, suppose that Assumptions \ref{asm: convexity}, \ref{asm: smoothness}, and \ref{asm:oracle} hold, and that the initialization and parameter conditions of \cref{lem:one-phase-gradient} are satisfied. Let $\mathcal F_0$ be the trivial $\sigma$-algebra and,
for $i=1,\ldots,K$, define
$$
\mathcal F_i:=\sigma(\xi_1,\ldots,\xi_i).
$$
We use the stochastic notation $G_i,g_i,\theta_i$ introduced in the two-phase analysis.

For $t=0,\ldots,K$, let $\mathcal E_t$ be the event that, for every $i=1,\ldots,t$,
$$
\begin{cases}
f(y_i) \le f(x_i)+\langle\nabla f(x_i),y_i-x_i\rangle +\dfrac{A_i}{4\alpha_i^2}\norm{y_i-x_i}^2,\\[0.8em]
\mathcal R_i(x^\star)\le\dfrac{R_0^2}{2}.
\end{cases}
$$
For $t=0$, there are no conditions, so $\mathbf 1_{\mathcal E_0}=1$. Define
$$
\widehat{\mathcal R}_t(x^\star) := \sum_{i=1}^t \mathbf 1_{\mathcal E_{i-1}} \left(
\alpha_i\langle\theta_i,x^\star-z_{i-1}\rangle + \alpha_i^2\norm{\theta_i}^2 \right).
$$
Since the events $\mathcal E_t$ are nested, on $\mathcal E_{t-1}$ we have $
\widehat{\mathcal R}_t(x^\star)=\mathcal R_t(x^\star).$ Moreover, on $\mathcal E_{i-1}$, \cref{lem:one-phase-gradient} gives
$$
\norm{\nabla f(x_i)}
\le
\frac{\lambda_i}{2}.
$$
Therefore, on $\mathcal E_{i-1}$, the same clipping estimates as in the
two-phase analysis apply:
$$
\begin{cases}
\norm{\mathbb E[\theta_i\mid\mathcal F_{i-1}]} \le \dfrac{2^\alpha\sigma^\alpha}{\lambda_i^{\alpha-1}},\\[0.8em]
\mathbb E[\norm{\theta_i}^2\mid\mathcal F_{i-1}] \le 2\sigma^\alpha\lambda_i^{2-\alpha},\\[0.8em]
\norm{\theta_i} \le \dfrac32\lambda_i.
\end{cases}
$$

To control $\widehat{\mathcal R}_t(x^\star)$, we separate the martingale
fluctuations from the clipping bias and the conditional second-moment terms.
We write
$$
\widehat{\mathcal R}_t(x^\star)=M_t+B_t,
$$
where
$$
\begin{cases}
M_t:= \displaystyle \sum_{i=1}^t \mathbf 1_{\mathcal E_{i-1}}\left(\alpha_i\left\langle
\theta_i-\mathbb E[\theta_i\mid\mathcal F_{i-1}],x^\star-z_{i-1}\right\rangle
+\alpha_i^2\left(\norm{\theta_i}^2-\mathbb E[\norm{\theta_i}^2\mid\mathcal F_{i-1}]\right)\right),\\[1.2em]
B_t:=\displaystyle\sum_{i=1}^t\mathbf 1_{\mathcal E_{i-1}}\left(\alpha_i\left\langle
\mathbb E[\theta_i\mid\mathcal F_{i-1}],x^\star-z_{i-1}\right\rangle+\alpha_i^2
\mathbb E[\norm{\theta_i}^2\mid\mathcal F_{i-1}]\right).
\end{cases}
$$
Since $\mathbf 1_{\mathcal E_{i-1}}$ and $x^\star-z_{i-1}$ are
$\mathcal F_{i-1}$-measurable, the increments of $M_t$ have zero
conditional mean. Hence $(M_t)_{t=0}^K$ is a martingale with $M_0=0$.

\begin{lemma}
\label{lem:one-phase-residual}
Suppose that Assumptions \ref{asm: convexity},
\ref{asm: smoothness}, and \ref{asm:oracle} hold, where $L_0,L_1>0$. Assume that $\norm{x_0-x^\star}\le R_0$ and $A_0\Delta_0\le\frac{R_0^2}{64}.$ Let $K\ge1$, $\delta\in(0,1), \ell\ge 8\log\frac{4K}{\delta},a\ge128\ell^2,$ and suppose that, for every $k=0,\ldots,K-1$,
$$
\begin{cases}
\dfrac{\alpha_{k+1}}{A_k} \le \dfrac{1}{64\ell(1+L_1R_0)},\\[0.8em]
A_k\ge aL_0\alpha_{k+1}^2.
\end{cases}
$$
For $i=1,\ldots,K$, let $
\lambda_i:=\frac{R_0}{\alpha_i\ell}.$ Suppose moreover that $$
\frac{\sigma^\alpha\ell^{\alpha-1}}{R_0^\alpha} \sum_{i=1}^K\alpha_i^\alpha \le \frac{1}{512}.$$

Then, with probability at least $1-\delta$, simultaneously for every
$t=1,\ldots,K$,
$$
\widehat{\mathcal R}_t(x^\star) \le \frac{R_0^2}{2}.
$$
On this event, $\mathcal E_K$ holds.
\end{lemma}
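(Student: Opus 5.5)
The proof mirrors that of \cref{lem:stopped-residuals}, now with the single comparator $u=x^\star$. It rests on the decomposition $\widehat{\mathcal R}_t(x^\star)=M_t+B_t$ introduced above. Two facts hold on $\mathcal E_{i-1}$. First, \cref{lem:one-phase-gradient} gives $\norm{\nabla f(x_i)}\le\lambda_i/2$, so the clipping estimates displayed before the lemma apply. Second, we need a localization bound on $\norm{x^\star-z_{i-1}}$. For $i\ge2$, \cref{lem:comparator2} with $u=x^\star$ gives
\[
A_{i-1}\Delta_{i-1}+\tfrac12\norm{z_{i-1}-x^\star}^2\le A_0\Delta_0+\tfrac12R_0^2+\mathcal R_{i-1}(x^\star)\le\tfrac{65}{64}R_0^2 .
\]
Since $\Delta_{i-1}\ge0$, this yields $\norm{x^\star-z_{i-1}}\le\tfrac32R_0$. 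For $i=1$ the same bound follows from $z_0=x_0$.

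\textbf{Predictable part.} Substitute $\alpha_i\lambda_i=R_0/\ell$, so that $\lambda_i^{1-\alpha}=(\alpha_i\ell/R_0)^{\alpha-1}$. This gives
\[
B_t\le\sum_{i\le t}\Bigl(4\sigma^\alpha\alpha_i\lambda_i^{1-\alpha}\tfrac32R_0+2\sigma^\alpha\alpha_i^2\lambda_i^{2-\alpha}\Bigr)=\Bigl(6+\tfrac2\ell\Bigr)R_0^2\,\frac{\sigma^\alpha\ell^{\alpha-1}}{R_0^\alpha}\sum_{i\le t}\alpha_i^\alpha .
\]
Under the hypothesis this is at most $8R_0^2/512=R_0^2/64$.

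\textbf{Martingale part.} The $i$-th increment is bounded in absolute value by $3\alpha_i\lambda_i\cdot\tfrac32R_0+\tfrac94\alpha_i^2\lambda_i^2\le 5R_0^2/\ell$. Its conditional variance is at most $4\sigma^\alpha\alpha_i^2\lambda_i^{2-\alpha}\tfrac94R_0^2+9\sigma^\alpha\alpha_i^4\lambda_i^{4-\alpha}$. Using $\alpha_i\lambda_i=R_0/\ell$ again, this equals $\bigl(9/\ell+9/\ell^3\bigr)R_0^4\,\sigma^\alpha\ell^{\alpha-1}R_0^{-\alpha}\alpha_i^\alpha$, so the total variance is at most $\approx 10R_0^4/(512\ell)$. \cref{cor:freedman-hp} with failure probability $\delta/K$ for each fixed $t$ gives
\[
|M_t|\le R_0^2\Bigl(\sqrt{\tfrac{20}{512\ell}\log\tfrac{2K}{\delta}}+\tfrac{10}{3\ell}\log\tfrac{2K}{\delta}\Bigr).
\]
Because $\ell\ge8\log(4K/\delta)$, the bracket is below $\tfrac{1}{10}+\tfrac{5}{12}<\tfrac12-\tfrac1{64}$. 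Hence $\widehat{\mathcal R}_t\le R_0^2/2$, and a union bound over $t=1,\dots,K$ makes this hold for all $t$ with probability at least $1-\delta$.

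\textbf{Induction.} On this event, suppose $\mathcal E_{t-1}$ holds. Then $\widehat{\mathcal R}_t=\mathcal R_t(x^\star)\le R_0^2/2$. \cref{lem:one-phase-descent}, applied with index $t-1$ and using $a\ge128\ell^2\ge32$, gives the descent inequality at iteration $t$. Therefore $\mathcal E_t$ holds, and by induction $\mathcal E_K$ holds.

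The delicate step is the localization bound: it must hold on the stopped event without circularity, and it is supplied by \cref{lem:comparator2} together with $A_0\Delta_0\le R_0^2/64$. The remaining numerical work is checking that the constants close.
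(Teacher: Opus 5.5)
Your argument follows the paper's proof step for step: the localization $\norm{x^\star-z_{i-1}}\le\frac32R_0$ from \cref{lem:comparator2}, the split $\widehat{\mathcal R}_t=M_t+B_t$, the bound $B_t\le 8R_0^2S_t\le R_0^2/64$, Freedman with failure probability $\delta/K$ plus a union bound over $t$, and the induction through \cref{lem:one-phase-descent}. However, the final numerical check is false. You claim $\frac1{10}+\frac5{12}<\frac12-\frac1{64}$, but in fact $\frac1{10}+\frac5{12}=\frac{31}{60}>\frac{31}{64}=\frac12-\frac1{64}$.

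The loose $\frac1{10}$ is not the only problem. The square-root term is at most $\sqrt{20/4096}=\frac{\sqrt5}{32}$, and even so $\frac{\sqrt5}{32}+\frac5{12}\approx0.4865>\frac{31}{64}\approx0.4844$. The ratio $\log(2K/\delta)/\ell$ can be arbitrarily close to $\frac18$ (take $\ell=8\log(4K/\delta)$ with $K/\delta$ large). So your three displayed bounds together — increment at most $5R_0^2/\ell$, total variance at most $10R_0^4/(512\ell)$, and $B_t\le R_0^2/64$ — only give $\widehat{\mathcal R}_t\lesssim 0.502\,R_0^2$, not $R_0^2/2$. As written, the conclusion does not follow.

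The repair is purely at the level of constants; everything else in your proposal is correct. You have two options:
\begin{itemize}
\item Keep the sharper increment bound $\bigl(\frac{9}{2\ell}+\frac{9}{4\ell^2}\bigr)R_0^2\le\frac{19}{4\ell}R_0^2$, which holds because $\ell\ge8\log(4K/\delta)>8\log4>9$. Freedman's linear term becomes $\frac{19}{6\ell}\log\frac{2K}{\delta}\,R_0^2\le\frac{19}{48}R_0^2$. Since $\frac{\sqrt5}{32}+\frac{19}{48}<\frac{31}{64}$, combining with $B_t\le R_0^2/64$ gives $\widehat{\mathcal R}_t<R_0^2/2$. This is how the paper closes the argument.
\item Keep your increment bound but do not round $B_t$ up. For $\ell>11$, $(6+2/\ell)/512<0.0121$, and $0.4866+0.0121<\frac12$.
\end{itemize}
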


\begin{lemma}
\label{lem:one-phase-weights}
Let $L_0>0$, $L_1\ge0$, $R_0>0$, $\ell\ge1$, $a>0$, and
$\alpha\in(1,2]$. Suppose that $0<A_0\le 1/(aL_0)$ and, for every $k\ge0$,
$$
\begin{cases}
\alpha_{k+1}=\min\left\{\dfrac{A_k}{64\ell(1+L_1R_0)}, \sqrt{\dfrac{A_k}{aL_0}}\right\},\\[1em]
A_{k+1}=A_k+\alpha_{k+1}.
\end{cases}
$$
Define
$$
K_0:= \left\lceil128\ell(1+L_1R_0)\log\left(\frac{[64\ell(1+L_1R_0)]^2}{aL_0A_0}\right)\right\rceil.
$$
Then, for every $K\ge K_0$, $ A_K \ge\frac{4(K-K_0)^2}{25aL_0}.$ Moreover, for every $i\ge1$,
$$
\alpha_i \le \frac{i+1}{2aL_0},
$$
and, for every $K\ge1$,
$$
\sum_{i=1}^K\alpha_i^\alpha \le \frac{K(K+1)^\alpha}{2^\alpha(aL_0)^\alpha}.
$$
\end{lemma}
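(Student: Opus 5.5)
We prove the three claims of \cref{lem:one-phase-weights} separately, starting with the simplest. Throughout, set $b:=64\ell(1+L_1R_0)\ge64$ and $T:=b^2/(aL_0)$.

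\emph{Pointwise weight bound.} Since $\alpha_{k+1}\le\sqrt{A_k/(aL_0)}$, we have $A_{k+1}\le A_k+\sqrt{A_k/(aL_0)}$. We prove by induction the bound $A_k\le (k+1)(k+2)/(4aL_0)$. The base case holds because $A_0\le 1/(aL_0)$; if this is too weak, use $\sqrt{A_k}\le \sqrt{A_0}+k/(2\sqrt{aL_0})$ instead. For the inductive step, note that $u\mapsto u+\sqrt{u/(aL_0)}$ is increasing, and check
\[
\frac{(k+1)(k+2)}{4}+\frac{\sqrt{(k+1)(k+2)}}{2}\le\frac{(k+2)(k+3)}{4},
\]
which follows from $\sqrt{(k+1)(k+2)}\le k+2$. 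This gives $\alpha_{k+1}\le\sqrt{A_k/(aL_0)}\le (k+2)/(2aL_0)$, that is, $\alpha_i\le (i+1)/(2aL_0)$. Raising to the power $\alpha$ and using $(i+1)\le K+1$ for $i\le K$ then gives $\sum_{i\le K}\alpha_i^\alpha\le K(K+1)^\alpha/(2^\alpha(aL_0)^\alpha)$. The base case $A_0\le1/(aL_0)$ versus $2/(4aL_0)$ needs care. If it fails, compare $\sqrt{A_k}$ instead: from $\sqrt{A_{k+1}}\le\sqrt{A_k}+\frac{1}{2\sqrt{aL_0}}$ we get $\sqrt{A_k}\le (k+2)/(2\sqrt{aL_0})$, since $\sqrt{A_0}\le 1/\sqrt{aL_0}=2/(2\sqrt{aL_0})$. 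This yields exactly $\alpha_{k+1}\le (k+2)/(2aL_0)$, which is cleaner, so I will use this route.

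\emph{Growth after the transient.} In the geometric phase, while $A_k<T$, we have $A_{k+1}=(1+1/b)A_k$. Hence $A_k\ge(1+1/b)^kA_0$, and using $\log(1+1/b)\ge 1/(2b)$, the threshold $T$ is reached after at most $2b\log(T/A_0)=128\ell(1+L_1R_0)\log(b^2/(aL_0A_0))\le K_0$ steps. If $A_0\ge T$ already, the first index is $0$. Let $k_0\le K_0$ be the first index with $A_{k_0}\ge T$. From then on, as noted in the paper, the square-root branch stays active, so $A_{k+1}=A_k+\sqrt{A_k/(aL_0)}$. Then
\[
\sqrt{A_{k+1}}-\sqrt{A_k}=\frac{\sqrt{A_k/(aL_0)}}{\sqrt{A_{k+1}}+\sqrt{A_k}}.
\]
Since $A_{k+1}\le A_k(1+1/\sqrt{aL_0A_k})\le A_k(1+1/b)\le\frac{65}{64}A_k$ (as $aL_0A_k\ge b^2$), the increment is at least $\frac{1}{(1+\sqrt{65/64})\sqrt{aL_0}}\ge\frac{2}{5\sqrt{aL_0}}$. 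Summing from $k_0$ to $K$ gives $\sqrt{A_K}\ge 2(K-k_0)/(5\sqrt{aL_0})\ge 2(K-K_0)/(5\sqrt{aL_0})$, and squaring yields $A_K\ge 4(K-K_0)^2/(25aL_0)$.

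The main obstacle is the counting of transient steps: handling the ceiling and the edge case $A_0\ge T$, and checking the inequality $\log(1+1/b)\ge1/(2b)$, which is valid for $b\ge1$. The rest is routine.
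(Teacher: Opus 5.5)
Your proof is correct and follows essentially the same route as the paper: an upper bound of $1/(2\sqrt{aL_0})$ on the increments of $\sqrt{A_k}$ (you obtain it in one line from $\alpha_{k+1}\le\sqrt{A_k/(aL_0)}$ by completing the square, where the paper treats the two branches of the minimum separately), then $\log(1+x)\ge x/2$ to place the switching index below $K_0$, then a lower bound of $2/(5\sqrt{aL_0})$ on the increments of $\sqrt{A_k}$ in the square-root regime (you use $aL_0A_k\ge b^2$, the paper only $aL_0A_k\ge 1$). As you suspected, the first induction on $A_k\le (k+1)(k+2)/(4aL_0)$ fails at the base case, so the $\sqrt{A_k}$ route you switched to is the right one, and the edge case $A_0\ge T$ never occurs because $A_0\le 1/(aL_0)$ gives $T/A_0\ge b^2>1$.
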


\subsection{Proofs}
\begin{proof}[Proof of \cref{lem:comparator2}]
Fix $u\in\mathbb R^d$ and define
$$
\Phi_i(u):=A_i\bigl(f(y_i)-f(u)\bigr)+\frac12\norm{z_i-u}^2.
$$
By convexity,
$$
\begin{cases}
f(y_{i-1})\ge f(x_i)+\langle\nabla f(x_i),y_{i-1}-x_i\rangle,\\[0.5em]
f(u)\ge f(x_i)+\langle\nabla f(x_i),u-x_i\rangle.
\end{cases}
$$
Multiplying the first inequality by $A_{i-1}$ and the second
by $\alpha_i$, and using $A_i=A_{i-1}+\alpha_i$ and $A_ix_i=A_{i-1}y_{i-1}+\alpha_i z_{i-1},$ we obtain
$$
A_if(x_i)-A_{i-1}f(y_{i-1})-\alpha_if(u) \le \alpha_i \langle\nabla f(x_i),z_{i-1}-u\rangle.
$$
We have $y_i-x_i=-\frac{\alpha_i^2}{A_i}g_i.$ Then, by the assumed descent inequality,
$$
A_i\bigl(f(y_i)-f(x_i)\bigr) \le-\alpha_i^2 \langle\nabla f(x_i),g_i\rangle
+\frac{\alpha_i^2}{4}\norm{g_i}^2.
$$
Since $z_i=z_{i-1}-\alpha_i g_i$, we have
$$
\frac12\norm{z_i-u}^2-\frac12\norm{z_{i-1}-u}^2
=
-\alpha_i\langle g_i,z_{i-1}-u\rangle
+\frac{\alpha_i^2}{2}\norm{g_i}^2.
$$
We deduce that
\begin{align*}
\Phi_i(u)-\Phi_{i-1}(u) &= A_i\bigl(f(y_i)-f(u)\bigr)-A_{i-1}\bigl(f(y_{i-1})-f(u)\bigr)+\frac12\norm{z_i-u}^2-\frac12\norm{z_{i-1}-u}^2 \\
&= A_i f(y_i)-A_{i-1} f(y_{i-1})-\alpha_i f(u)+\frac12\norm{z_i-u}^2-\frac12\norm{z_{i-1}-u}^2 
\\
&\le
\alpha_i\langle\nabla f(x_i),z_{i-1}-u\rangle-\alpha_i^2\langle\nabla f(x_i),g_i\rangle
+\frac{\alpha_i^2}{4}\norm{g_i}^2  -\alpha_i\langle g_i,z_{i-1}-u\rangle
+\frac{\alpha_i^2}{2}\norm{g_i}^2 \\
&=\alpha_i\langle\nabla f(x_i)-g_i,z_{i-1}-u\rangle +\alpha_i^2\left(-\langle\nabla f(x_i),g_i\rangle
+\frac34\norm{g_i}^2\right) \\
&=\alpha_i\langle\theta_i,u-z_{i-1}\rangle+\alpha_i^2\left(\langle\theta_i,g_i\rangle-\frac14\norm{g_i}^2\right).
\end{align*}
Since
$
\langle\theta_i,g_i\rangle-\frac14\norm{g_i}^2=\norm{\theta_i}^2-\frac14\norm{g_i-2\theta_i}^2
\le\norm{\theta_i}^2,$ it follows that
$$
\Phi_i(u)-\Phi_{i-1}(u) \le \alpha_i\langle\theta_i,u-z_{i-1}\rangle
+\alpha_i^2\norm{\theta_i}^2.
$$

Summing over $i=1,\ldots,t,$ we obtain
$$
\Phi_t(u) \le \Phi_0(u)+\mathcal R_t(u),$$
where $\Phi_0(u) =A_0\bigl(f(v_0)-f(u)\bigr)+\frac12\norm{v_0-u}^2.$
\end{proof}

\begin{proof}[Proof of \cref{lem:one-phase-descent}]
Let $j=1,\ldots,t$. By \cref{lem:comparator2}
with $u=x^\star,$ we have
$$
A_j\bigl(f(y_j)-f^\star\bigr)+\frac12\norm{z_j-x^\star}^2\le A_0\Delta_0
+\frac12\norm{x_0-x^\star}^2+\mathcal R_j(x^\star)\le\frac{65}{64}R_0^2.
$$
For $j=0$, we have $A_0\Delta_0\le R_0^2/64$ and $\norm{z_0-x^\star}=\norm{x_0-x^\star}\le R_0$.
Therefore, for all $j=0,\ldots,t$, we have
$$
\begin{cases}
f(y_j)-f^\star \le \dfrac{2R_0^2}{A_j},\\[0.8em]
\norm{z_j-x^\star} \le\dfrac32R_0.
\end{cases}
$$
Moreover, $A_ty_t=A_0x_0+\sum_{j=1}^t\alpha_j z_j$ and $A_t=A_0+\sum_{j=1}^t\alpha_j.$
Then
$$
y_t-x^\star =\frac{A_0}{A_t}(x_0-x^\star)+\sum_{j=1}^t\frac{\alpha_j}{A_t}(z_j-x^\star).
$$
This implies that $\norm{y_t-x^\star}\le\frac32R_0,$ and hence 
$$
\norm{z_t-y_t}\le 3R_0.
$$
It remains to show that the local smoothness around $x_{t+1}$ is small
enough compared with $A_{t+1}/\alpha_{t+1}^2$.

By the assumptions on the weights,
$$
\frac{\alpha_{t+1}}{A_t} \le \frac{1}{64\ell(1+L_1R_0)} \qquad\text{and}\qquad
\frac{A_t}{\alpha_{t+1}^2}\ge aL_0.
$$
Moreover,
$$
\frac{A_t}{\alpha_{t+1}^2}=\frac{1}{A_t}\left(\frac{A_t}{\alpha_{t+1}}\right)^2\ge \frac{64^2\ell^2(1+L_1R_0)^2}{A_t}.
$$
We first control the gradient at $y_t$. By \cref{lem:self-bounding-norm-form}
and $\nu\ge1/2$,
$$
\norm{\nabla f(y_t)} \le 4L_1\bigl(f(y_t)-f^\star\bigr) + 2\sqrt{L_0\bigl(f(y_t)-f^\star\bigr)}.
$$

Using $f(y_t)-f^\star\le2R_0^2/A_t$, we obtain $
\norm{\nabla f(y_t)} \le \frac{8L_1R_0^2}{A_t}+2\sqrt{2}R_0\sqrt{\frac{L_0}{A_t}}.$
Therefore,
\begin{align*}
L_0+L_1\norm{\nabla f(y_t)} &\le L_0 +\frac{8L_1^2R_0^2}{A_t} +2\sqrt{2}L_1R_0\sqrt{\frac{L_0}{A_t}}\\
&\le\left(\frac1a+\frac{1}{512\ell^2} +\frac{\sqrt{2}}{32\ell\sqrt a}\right)\frac{A_t}{\alpha_{t+1}^2}\\
&\le \frac{21}{512}\frac{A_t}{\alpha_{t+1}^2},
\end{align*}
where the last inequality uses $a\ge32$ and $\ell\ge1$.

We next transfer this bound from $y_t$ to $x_{t+1}$. We have $
x_{t+1}-y_t=\frac{\alpha_{t+1}}{A_{t+1}}(z_t-y_t).$
Therefore, using $\norm{z_t-y_t}\le3R_0$, we obtain
$$
L_1\norm{x_{t+1}-y_t} \le 3L_1R_0\frac{\alpha_{t+1}}{A_t}\le \frac{3}{64}.
$$
Thus, by \cref{lem:gradient-growth},
\begin{align*}
L_0+L_1\norm{\nabla f(x_{t+1})} &\le e^{L_1\norm{x_{t+1}-y_t}}
\bigl(L_0+L_1\norm{\nabla f(y_t)}\bigr)\\
&\le e^{3/64}\frac{21}{512} \frac{A_t}{\alpha_{t+1}^2}\\
&\le \frac18 \frac{A_t}{\alpha_{t+1}^2}\\
&\le \frac18 \frac{A_{t+1}}{\alpha_{t+1}^2}.
\end{align*}
It holds that $2\bigl(L_0+L_1\norm{\nabla f(x_{t+1})}\bigr) \le
\frac{A_{t+1}}{4\alpha_{t+1}^2}.$

We have $
y_{t+1}-x_{t+1} =-\frac{\alpha_{t+1}^2}{A_{t+1}}g_{t+1}$
and $\norm{g_{t+1}}\le\lambda_{t+1}$, then
\begin{align*}
L_1\norm{y_{t+1}-x_{t+1}}&\le\frac{L_1\alpha_{t+1}^2}{A_{t+1}}\lambda_{t+1}\\
&= \frac{L_1R_0}{\ell}\frac{\alpha_{t+1}}{A_{t+1}}\\
&\le\frac{L_1R_0}{64\ell^2(1+L_1R_0)} \le 1.
\end{align*}
By  \cref{lem:local-smoothness-short-segment}, we obtain 
\begin{align*}
f(y_{t+1})
&\le
f(x_{t+1}) +\langle\nabla f(x_{t+1}),y_{t+1}-x_{t+1}\rangle +2\bigl(L_0+L_1\norm{\nabla f(x_{t+1})}\bigr)\norm{y_{t+1}-x_{t+1}}^2\\
&\le f(x_{t+1}) +\langle\nabla f(x_{t+1}),y_{t+1}-x_{t+1}\rangle +\frac{A_{t+1}}{4\alpha_{t+1}^2} \norm{y_{t+1}-x_{t+1}}^2.
\end{align*}
\end{proof}

\begin{proof}[Proof of \cref{lem:one-phase-gradient}]
From the proof of \cref{lem:one-phase-descent}, we have $\norm{z_t-y_t}\le3R_0$ and
$$
\norm{\nabla f(y_t)} \le\frac{8L_1R_0^2}{A_t}+ 2\sqrt{2}R_0\sqrt{\frac{L_0}{A_t}}.
$$
Therefore, using the assumptions on the weights,
\begin{align*}
\frac{\alpha_{t+1}\ell}{R_0}\norm{\nabla f(y_t)}
&\le 8\ell L_1R_0\frac{\alpha_{t+1}}{A_t}+ 2\sqrt{2}\ell\alpha_{t+1}\sqrt{\frac{L_0}{A_t}}\\
&\le
\frac18+\frac{2\sqrt{2}\ell}{\sqrt a}\\
&\le \frac38,
\end{align*}
where the last inequality follows from $a\ge128\ell^2$.

We next transfer this bound from $y_t$ to $x_{t+1}$. Since $x_{t+1}-y_t=\frac{\alpha_{t+1}}{A_{t+1}}(z_t-y_t),$
and $\norm{z_t-y_t}\le3R_0$, we have
$$
L_1\norm{x_{t+1}-y_t}\le 3L_1R_0\frac{\alpha_{t+1}}{A_t} \le \frac{3}{64}.
$$
Hence, by \cref{lem:gradient-growth},
$$
L_0+L_1\norm{\nabla f(x_{t+1})} \le e^{L_1\norm{x_{t+1}-y_t}} \bigl(L_0+L_1\norm{\nabla f(y_t)}\bigr),
$$
and therefore
$$
\norm{\nabla f(x_{t+1})} \le e^{L_1\norm{x_{t+1}-y_t}} \norm{\nabla f(y_t)} + \frac{e^{L_1\norm{x_{t+1}-y_t}}-1}{L_1}L_0.
$$
Using $L_1\norm{x_{t+1}-y_t}\le3/64$ and
$e^x-1\le xe^x$ for $x\ge0$, we obtain
$$
\frac{e^{L_1\norm{x_{t+1}-y_t}}-1}{L_1}L_0\le e^{3/64}L_0\norm{x_{t+1}-y_t} \le
3e^{3/64}L_0R_0\frac{\alpha_{t+1}}{A_t}.
$$
Thus,
\begin{align*}
\frac{\alpha_{t+1}\ell}{R_0}\norm{\nabla f(x_{t+1})} &\le e^{3/64}\frac{\alpha_{t+1}\ell}{R_0}
\norm{\nabla f(y_t)}+ 3e^{3/64}\ell\frac{L_0\alpha_{t+1}^2}{A_t}\\
&\le e^{3/64}\left( \frac38+\frac{3\ell}{a}\right)\\
&< \frac12,
\end{align*}
where the last inequality follows from $a\ge128\ell^2$, $\ell\ge1$, and $e^{3/64}<16/15$.
Consequently,
$$
\norm{\nabla f(x_{t+1})} \le \frac{R_0}{2\alpha_{t+1}\ell} =\frac{\lambda_{t+1}}{2}.
$$
\end{proof}

\begin{proof}[Proof of \cref{lem:one-phase-residual}]
For $t=1,\ldots,K$, define
$$
S_t:= \frac{\sigma^\alpha\ell^{\alpha-1}}{R_0^\alpha} \sum_{i=1}^t\alpha_i^\alpha.
$$
By assumption, we have $S_t \le\frac1{512}.$

Using the proof of \cref{lem:one-phase-descent}, for every
$i=1,\ldots,t$, on $\mathcal E_{i-1}$,
$$
\norm{x^\star-z_{i-1}}\le\frac32R_0.
$$

We first control the predictable term $B_t$. Using the clipping bias
and conditional second-moment bounds,
\begin{align*}
B_t &\le \sum_{i=1}^t \mathbf 1_{\mathcal E_{i-1}} \left(\frac32\,2^\alpha\sigma^\alpha R_0
\alpha_i\lambda_i^{1-\alpha}+2\sigma^\alpha\alpha_i^2\lambda_i^{2-\alpha}\right).
\end{align*}
Since $\lambda_i=R_0/(\alpha_i\ell)$,
$$
\begin{cases}
\alpha_i\lambda_i^{1-\alpha} =\alpha_i^\alpha R_0^{1-\alpha}\ell^{\alpha-1},\\[0.6em]
\alpha_i^2\lambda_i^{2-\alpha}= \alpha_i^\alpha R_0^{2-\alpha}\ell^{\alpha-2}.
\end{cases}
$$
Therefore,
$$
B_t \le \left(3\cdot2^{\alpha-1}+\frac{2}{\ell} \right) R_0^2S_t\le 8R_0^2S_t \le\frac{R_0^2}{64},
$$
where we used $2^\alpha\le4$ and $\ell\ge1$.

We next control the martingale term $M_t$. Let $X_i$ denote its
$i$-th increment. On $\mathcal E_{i-1}$,
$$
\begin{cases}
\norm{\theta_i-\mathbb E[\theta_i\mid\mathcal F_{i-1}]} \le 3\lambda_i,\\[0.8em]
\left|\norm{\theta_i}^2 -\mathbb E[\norm{\theta_i}^2\mid\mathcal F_{i-1}]\right|
\le \dfrac94\lambda_i^2.
\end{cases}
$$
Therefore,
\begin{align*}
|X_i| &\le 3\alpha_i\lambda_i\norm{x^\star-z_{i-1}} +\frac94\alpha_i^2\lambda_i^2\\
&\le \frac92\alpha_i\lambda_iR_0 + \frac94\alpha_i^2\lambda_i^2\\
&= \left( \frac{9}{2\ell} + \frac{9}{4\ell^2}\right)R_0^2 \le \frac{19}{4\ell}R_0^2,
\end{align*}
where the last inequality uses $\ell\ge8\log(4K/\delta)>9$.

We now bound the conditional variances. On $\mathcal E_{i-1}$,
$$
\begin{cases}
\mathbb E\left[ \norm{\theta_i-\mathbb E[\theta_i\mid\mathcal F_{i-1}]}^2
\mid\mathcal F_{i-1} \right] \le \mathbb E[\norm{\theta_i}^2\mid\mathcal F_{i-1}]
\le 2\sigma^\alpha\lambda_i^{2-\alpha},\\[1em]
\mathbb E[\norm{\theta_i}^4\mid\mathcal F_{i-1}] \le \dfrac94\lambda_i^2
\mathbb E[\norm{\theta_i}^2\mid\mathcal F_{i-1}] \le \dfrac92\sigma^\alpha\lambda_i^{4-\alpha}.
\end{cases}
$$
Since $\mathbf 1_{\mathcal E_{i-1}}$ is
$\mathcal F_{i-1}$-measurable, using $(a+b)^2\le2a^2+2b^2$ and
$\norm{x^\star-z_{i-1}}\le\frac32R_0$, we obtain
$$
\mathbb E[X_i^2\mid\mathcal F_{i-1}]
\le
9\sigma^\alpha\alpha_i^2R_0^2\lambda_i^{2-\alpha}
+
9\sigma^\alpha\alpha_i^4\lambda_i^{4-\alpha}.
$$
Since $\lambda_i=R_0/(\alpha_i\ell)$, summing over $i=1,\ldots,t,$ we obtain
$$
\sum_{i=1}^t \mathbb E[X_i^2\mid\mathcal F_{i-1}]\le 9R_0^4 \left( \frac1\ell+\frac1{\ell^3}
\right)S_t \le \frac{10R_0^4}{512\ell},$$ 
where we again used $\ell>9$ and $S_t\le 1/512$.

Fix $t\in\{1,\ldots,K\}$. Applying \cref{cor:freedman-hp} with failure
probability $\delta/K$, we obtain, with probability at least
$1-\delta/K$,
\begin{align*}
|M_t| &\le R_0^2 \left( \sqrt{ \frac{20}{512\ell} \log\frac{2K}{\delta} }+\frac{19}{6\ell}
\log\frac{2K}{\delta}\right)\\
&\le R_0^2 \left( \frac{\sqrt5}{32} + \frac{19}{48} \right)\\
&< \frac{31}{64}R_0^2,
\end{align*}
where we used $ \ell\ge8\log\frac{4K}{\delta} \ge 8\log\frac{2K}{\delta}.$

Taking a union bound over $t=1,\ldots,K$, with probability at least
$1-\delta$, simultaneously for every $t=1,\ldots,K$,
$$
|M_t|<\frac{31}{64}R_0^2.
$$
On this event,
$$
\widehat{\mathcal R}_t(x^\star)=M_t+B_t\le|M_t|+B_t<\frac{31}{64}R_0^2+\frac1{64}R_0^2=\frac{R_0^2}{2}.
$$

It remains to show that $\mathcal E_K$ holds on this event.
We argue by induction. For $t=0$, there are no conditions, so
$\mathbf 1_{\mathcal E_0}=1$. Suppose that $\mathcal E_{t-1}$ holds
for some $t\in\{1,\ldots,K\}$. Applying
\cref{lem:one-phase-descent} with $t-1$ gives
$$
f(y_t) \le f(x_t)+\langle\nabla f(x_t),y_t-x_t\rangle +\frac{A_t}{4\alpha_t^2}\norm{y_t-x_t}^2.
$$
Moreover, since $\mathcal E_{t-1}$ holds,
$$
\widehat{\mathcal R}_t(x^\star) = \mathcal R_t(x^\star) \le \frac{R_0^2}{2}.
$$
Thus all the conditions defining $\mathcal E_t$ hold. By induction,
$\mathcal E_K$ holds.
\end{proof}

\begin{proof}[Proof of \cref{lem:one-phase-weights}]
We first show that, for every $k\ge0$,
$$
\sqrt{aL_0A_{k+1}}-\sqrt{aL_0A_k}\le\frac12.
$$
If $A_k\le\frac{[64\ell(1+L_1R_0)]^2}{aL_0},$ then  $A_{k+1}=\left(1+\frac{1}{64\ell(1+L_1R_0)}\right)A_k.
$ Therefore,
$$
\sqrt{aL_0A_{k+1}}-\sqrt{aL_0A_k} = \frac{\sqrt{aL_0A_k}/[64\ell(1+L_1R_0)]}{\sqrt{1+1/[64\ell(1+L_1R_0)]}+1} \le \frac12.
$$
If $A_k\ge\frac{[64\ell(1+L_1R_0)]^2}{aL_0},$ then $A_{k+1}=A_k+\sqrt{\frac{A_k}{aL_0}}.$
Therefore
$$
\begin{aligned}
\sqrt{aL_0A_{k+1}}-\sqrt{aL_0A_k}&=\frac{aL_0(A_{k+1}-A_k)}{\sqrt{aL_0A_{k+1}}+\sqrt{aL_0A_k}}\\
&=\frac{\sqrt{aL_0A_k}}{\sqrt{aL_0A_{k+1}}+\sqrt{aL_0A_k}} \le\frac12.
\end{aligned}
$$
Since $A_0\le1/(aL_0)$, it follows that $\sqrt{aL_0A_k}\le1+\frac{k}{2}=\frac{k+2}{2}.$
Using $\alpha_{k+1} \le \sqrt{\frac{A_k}{aL_0}}=\frac{\sqrt{aL_0A_k}}{aL_0},$
we obtain
$$
\alpha_{k+1} \le \frac{k+2}{2aL_0}.
$$
Thus, for every $i\ge1$, we have $\alpha_i\le\frac{i+1}{2aL_0}.$

Consequently,
$$
\sum_{i=1}^K\alpha_i^\alpha \le \frac{1}{2^\alpha(aL_0)^\alpha} \sum_{i=1}^K(i+1)^\alpha
\le \frac{K(K+1)^\alpha}{2^\alpha(aL_0)^\alpha}.
$$

It remains to lower-bound $A_K$. Define
$$
m:= \min\left\{ k\ge0:A_k\ge\frac{[64\ell(1+L_1R_0)]^2}{aL_0}\right\}.
$$

Since $A_0>0$, if $A_k<\frac{[64\ell(1+L_1R_0)]^2}{aL_0}$ for every $k\ge0$, then
$$
A_k= \left( 1+\frac{1}{64\ell(1+L_1R_0)}\right)^kA_0\to\infty,
$$
which is impossible. Hence $m$ is well-defined.

We first show that $m\le K_0$. Suppose, by contradiction, that
$m>K_0$. Then, for every $k=0,\ldots,K_0-1$, we have $A_k<\frac{[64\ell(1+L_1R_0)]^2}{aL_0},$ and therefore
$$
A_{K_0} =\left(1+\frac{1}{64\ell(1+L_1R_0)}\right)^{K_0}A_0.
$$
Since $\log(1+x)\ge x/2$ for all $x\in[0,1]$, it holds that
$$
\log\left( 1+\frac{1}{64\ell(1+L_1R_0)}\right) \ge \frac{1}{128\ell(1+L_1R_0)}.
$$
Thus,
$$
\begin{aligned}
A_{K_0} &= A_0\exp\left( K_0 \log\left( 1+\frac{1}{64\ell(1+L_1R_0)}\right)\right)\\
&\ge A_0\exp\left( \frac{K_0}{128\ell(1+L_1R_0)}\right) \\
&\ge A_0\exp\left(\log\left(\frac{[64\ell(1+L_1R_0)]^2}{aL_0A_0}\right)\right)\\
&= \frac{[64\ell(1+L_1R_0)]^2}{aL_0},
\end{aligned}
$$
which contradicts $m>K_0$. Thus $m\le K_0$. For every $k\ge m$, we have
$$
A_{k+1}= A_k+\sqrt{\frac{A_k}{aL_0}}.
$$
Moreover, $\sqrt{aL_0A_k} \ge 64\ell(1+L_1R_0) \ge 1.$ Therefore,
$$
\sqrt{aL_0A_{k+1}}-\sqrt{aL_0A_k}=\frac{1}{\sqrt{1+1/\sqrt{aL_0A_k}}+1} \ge \frac{1}{\sqrt2+1}>\frac25.
$$
Thus, for every $K\ge K_0$,
$$
\sqrt{aL_0A_K} \ge \frac25(K-m) \ge \frac25(K-K_0).
$$
We deduce that
$$
A_K \ge \frac{4(K-K_0)^2}{25aL_0}.
$$
\end{proof}

\begin{proof}[Proof of \cref{thm:one-phase}]
By the definition of the weights, for every $k=0,\ldots,K-1$,
$$
\begin{cases}
\displaystyle \frac{\alpha_{k+1}}{A_k}\le\frac{1}{64\ell(1+L_1R_0)},\\[1em]
\displaystyle A_k\ge aL_0\alpha_{k+1}^2.
\end{cases}
$$
Moreover, $A_0\Delta_0\le A_0\bar\Delta_0\le R_0^2/64$, $\ell=8\log(4K/\delta)$, and $a\ge128\ell^2$. It remains to verify the last condition of \cref{lem:one-phase-residual}. By  \cref{lem:one-phase-weights} and $K+1\le2K$,
$$
\sum_{i=1}^K\alpha_i^\alpha
\le\frac{K(K+1)^\alpha}{2^\alpha(aL_0)^\alpha}
\le\frac{K^{\alpha+1}}{(aL_0)^\alpha}.
$$
By definition of $a$, it holds that
$$
\frac{\sigma^\alpha\ell^{\alpha-1}}{R_0^\alpha}\sum_{i=1}^K\alpha_i^\alpha
\le\frac{\sigma^\alpha\ell^{\alpha-1}K^{\alpha+1}}{R_0^\alpha(aL_0)^\alpha}
\le\frac1{512}.
$$
Therefore, all the conditions of \cref{lem:one-phase-residual} are satisfied. Then, with probability at least $1-\delta$, the event $\mathcal E_K$ holds and
$$
\mathcal R_K(x^\star)=\widehat{\mathcal R}_K(x^\star)\le\frac{R_0^2}{2}.
$$
On this event, by applying \cref{lem:comparator2} with $u=x^\star$, we obtain
$$
A_K\bigl(f(y_K)-f^\star\bigr)+\frac12\norm{z_K-x^\star}^2
\le A_0\Delta_0+\frac12\norm{x_0-x^\star}^2+\mathcal R_K(x^\star)
\le\frac{65}{64}R_0^2.
$$
Consequently,
$$
f(y_K)-f^\star\le\frac{65R_0^2}{64A_K}.
$$
Since $K\ge2K_0$,  \cref{lem:one-phase-weights} yields
$$
A_K\ge\frac{4(K-K_0)^2}{25aL_0}\ge\frac{K^2}{25aL_0},
$$
and therefore
$$
f(y_K)-f^\star\le\frac{1625}{64}\frac{aL_0R_0^2}{K^2}.
$$
Using the definition of $a$ and $\max\{x,y\}\le x+y$, we obtain
$$
f(y_K)-f^\star
\le3250\frac{L_0R_0^2\ell^2}{K^2}
+\frac{1625}{64}\,512^{1/\alpha}
\frac{\sigma R_0\ell^{(\alpha-1)/\alpha}}{K^{(\alpha-1)/\alpha}}.
$$

\end{proof}

\end{document}